\def\arXiv#1{\href{http://arxiv.org/abs/#1}{arXiv:#1}}
\def\?[#1]{\textbf{[#1]}\marginpar{\Large{\textbf{??}}}}
\def\smallsection#1{\smallskip\noindent\textbf{#1}.}
\let\epsilon=\varepsilon 
\newcommand{\RR}{{\mathbb R}}
\newcommand{\ZZ}{{\mathbb Z}}
\newtheorem{theo}{Theorem}
\newtheorem{prop}{Proposition}[section]
\newtheorem{ass}{Assumption}
\newtheorem{lemm}[prop]{Lemma}
\newtheorem{rem}{Remark}
\newtheorem{ex}{Example}
\numberwithin{equation}{section}
\let\Im=\Imag
\let\Re=\Real
\DeclareMathOperator{\vol}{vol}
\def\indic{\operatorname{1\hskip-2.75pt\relax l}}
\newcommand\reallywidehat[1]{\arraycolsep=0pt\relax%
\begin{array}{c}
\stretchto{
  \scaleto{
    \scalerel*[\widthof{\ensuremath{#1}}]{\kern-.5pt\bigwedge\kern-.5pt}
    {\rule[-\textheight/2]{1ex}{\textheight}} 
  }{\textheight} %
}{0.5ex}\\           
#1\\                 
\rule{-1ex}{0ex}
\end{array}
}
\title[Magnetic Schr\"odinger- and Hartree equations]{Computability of magnetic Schr\"odinger and Hartree equations on unbounded domains}
\author{Simon Becker}
\email{simon.becker@damtp.cam.ac.uk}
\address{Department of Applied Mathematics and Theoretical Physics, University of Cambridge, Wilberforce Road, Cambridge, CB3 0WA, United Kingdom.}
\author{Jonathan Sewell}
\email{js2354@cam.ac.uk}
\address{Department of Applied Mathematics and Theoretical Physics, University of Cambridge, Wilberforce Road, Cambridge, CB3 0WA, United Kingdom.}
\author{Euan Tebbutt}
\email{ejt69@cam.ac.uk}
\address{Department of Applied Mathematics and Theoretical Physics, University of Cambridge, Wilberforce Road, Cambridge, CB3 0WA, United Kingdom.}
\begin{document}
\begin{abstract}
We study the computability of global solutions to linear Schr\"odinger equations with magnetic fields and the Hartree equation on $\mathbb R^3$. We show that the solution can always be globally computed with error control on the entire space if there exist a priori decay estimates in generalized Sobolev norms on the initial state. Using weighted Sobolev norm estimates, we show that the solution can be computed with uniform computational runtime with respect to initial states and potentials. We finally study applications in optimal control theory and provide numerical examples.
\end{abstract}
\maketitle

\section{Introduction}

In this article, we address for the first time the question under which conditions the solution to linear Schr\"odinger equations with external magnetic field or the Hartree equation can be reduced to an effective dynamics on a bounded domain that can then be numerically computed. We show that in both cases, this is possible if the initial state can be controlled in a generalized Sobolev-type norm that controls both the regularity and the decay of the initial state. The linear Schrödinger equation with external magnetic field describes a single-particle system subject to an external magnetic field. The Hartree equation naturally occurs as a mean-field limit of many-particle systems and in quantum transport theory \cite{IZL,BM}. Due to their relevance in physics, we focus on magnetic Schr\"odinger equations or Hartree equations with bilinear interaction potential on $\mathbb R^3$. 

Most of the literature dealing with numerical approximation schemes on unbounded domains, has been concerned with the existence of transparent or absorbing boundary conditions, see e.g. \cite{KG18,JG07} and references therein, for electro-magnetic time-dependent Schrödinger equations. In \cite{KG18} spatial restrictions are imposed on the electromagnetic vector potential, to avoid undesired boundary effects. In this work, we circumvent such undesired restrictions by studying a somewhat different question: \emph{Can we forget about the unboundedness of the domain and just restrict the dynamics to a bounded domain whose size is uniform in the input parameters (initial state, potentials, control) of our problem?} 

Similar to the above results for linear Schrödinger equations, a mode decomposition for nonlinear Schrödinger equations has been thoroughly addressed on bounded domain by Soffer and Stucchio in \cite{Soffer,Soffer2}.

The linear Schrödinger equation is often numerically discretized by a simple Crank-Nicholson method, as this one preserves the $L^2$-norm. To numerically study the non-linear dynamics, there exist many results on splitting methods for nonlinear Hartree equations (also called Schrödinger-Poisson equation)\cite{F,L} and even relativistic \cite{BD} and fractional Hartree equations \cite{ZWW}. 

Thus, even though there exists a vast literature on convergent discretization schemes, an implementation on a physical computer must not rely on infinitely many variables. For instance, a convergent algorithm that assumes a uniform discretization of the entire space $\mathbb R^3$ cannot be implemented in practice. 

In Section \ref{sec:OCT}, we also discuss applications of our work to optimal control problems. 

We illustrate our results with some basic numerical examples in Section \ref{sec:numex}. 

We emphasize at this point that all the constants appearing in this work could be made fully explicit. But for the sake of presentation, we do not specify them explicitly. 

Let us now describe the precise set-up of the equations we consider, starting with the magnetic linear Schrödinger equation:

\subsection{The magnetic Schr\"odinger equation} 

Let $H_0$ be a self-adjoint magnetic Schr\"odinger operator $H_0=(-i\nabla-A)^2+V: D(H_0) \subset L^2(\mathbb R^3) \rightarrow L^2(\mathbb R^3)$ where $V$ is a time-independent \emph{pinning potential} $V$. We consider the evolution of a particle under the influence of an external \emph{control potential} $V_{\operatorname{con}}$ with control function $u \in W^{1,1}_{\operatorname{pcw}}(0,T)$. Writing $V_{\operatorname{TD}}(t):= u(t) V_{\operatorname{con}} $ for the full time-dependent potential, we study time-dependent linear Schr\"odinger equations of the form
\begin{equation}
\begin{split}
\label{eq:bilSchr}
i \partial_t \psi(x,t) &= (H_0 + V_{\operatorname{TD}}(t))\psi(x,t), \quad (x,t) \in \mathbb R^3 \times (0,T) \\
\psi(\bullet,0)&=\varphi_0.
\end{split}
\end{equation}

The assumptions we impose on the potentials are as follows
\begin{ass}
\label{ass:ass1}
We demand that the static pinning potential can be written as a sum of a regular (possibly unbounded at infinity) and singular part (possible local singularities) $V=W_{\operatorname{reg}}+W_{\operatorname{sing}}$. For these two potentials and the control potential $V_{\operatorname{con}}$ with control $u$, we impose the assumptions that
\begin{itemize}
    \item $\Vert \langle \bullet \rangle^{-2} V_{\operatorname{con}} \Vert_{\infty}< \infty$ and $\Vert \langle \bullet \rangle^{-2} W_{\operatorname{reg}} \Vert_{\infty}< \infty,$
    \item $W_{\operatorname{sing}} \in L^2(\RR^3),$ and
    \item $u \in W^{1,1}_{\operatorname{pcw}}(0,T).$
\end{itemize}
For the magnetic vector potential we either assume a constant magnetic field $B_0>0$ with associated vector potential $A=\frac{B_0}{2}(-x_2,x_1,0)$ or a vector potential $A \in W^{1,\infty}(\RR^3;\RR^3)$.
\end{ass}

\begin{rem}
Our choice of singular and regular part allows us to treat standard physical examples of potentials such as the Coulomb potential $x \mapsto 1/\vert x \vert$ (singular part) and the harmonic potential $x \mapsto \vert x \vert^2$ (regular part and control potential).
\end{rem}

The Schr\"odinger equation \eqref{eq:bilSchr} with linear control potential appears naturally in the study of static physical systems with Hamiltonian $H_0$, under the influence of a time-dependent bilinear electric potential.

In this article, we build upon techniques, introduced in \cite{B,BKP}, to prove existence of solutions to Schrödinger equations in certain weighted Sobolev spaces that ensure additional spatial decay. These spaces are essential in our study of global numerical algorithms that provide solutions to \eqref{eq:bilSchr} on unbounded domains. 

\subsection{Hartree equation} 
In the case of the Hartree equation, we consider a single particle described by the Hartree equation with Schr\"odinger operator $H_0=(-i\nabla-A)^2+V: D(H_0) \subset L^2(\mathbb R^3) \rightarrow L^2(\mathbb R^3)$ and static \emph{pinning potential} $V$ and \emph{control potential} $V_{\operatorname{con}}$ with time-dependent control function $u \in W^{1,1}(0,T)$ such that $V_{\operatorname{TD}}(t):= u(t) V_{\operatorname{con}} $
\begin{equation*}
\begin{split}
i \partial_t \psi(x,t) &= (H_0 + V_{\operatorname{TD}}(t))\psi(x,t)+\left(\vert \psi(\bullet,t) \vert^2*\frac{1}{\vert \bullet \vert} \right)(x)\psi(x,t), \quad (x,t) \in \mathbb R^3 \times (0,T) \\
\psi(\bullet,0)&=\varphi_0.
\end{split}
\end{equation*}
The potentials and magnetic fields are assumed to satisfy the conditions in Assumption \ref{ass:ass1}.

In this section, we shall fix some notation that we use throughout the article: 

\smallsection{Notation} We denote by $H^n$ the standard Sobolev space with respect to the $L^2$ inner product. By $H^k_n$ we denote the weighted Sobolev spaces with scalar product induced by 
$$\Vert f \Vert_{H^k_n} := \sqrt{\Vert f \Vert_{H^k}^2 + \Vert \vert x \vert^{n} f \Vert^2_{2}}.$$
We write $a \lesssim b$ to indicate that there is a constant $C>0$ such that $a \le C b.$ The space $W^{1,1}_{\operatorname{pcw}}(0,T)$ is the space of piecewise $W^{1,1}$ functions on $(0,T).$ We denote by $C^k_b$ the Banach space of $C^k$ functions whose first $k$ derivatives and the function itself are all globally bounded. We denote by $\mathcal L(X)$ the bounded linear operator on some normed space $X.$

We denote by $C>0$ an arbitrary constant and $C_{\mu}>0$ an arbitrary constant that depends on some parameter $\mu.$

We sometimes abuse the notation for spatial integrals in $\mathbb R^3$ to simplify the notation and mean the same when writing the following expressions, if there is no misunderstanding possible,
$$\int_{\mathbb R^3} f(x) \ dx = \int_{\mathbb R^3} f \ dx = \int_{\mathbb R^3} f.$$
In particular, we also abuse the notation by identifying the following expressions and versions of that
\[ \int_{\mathbb R^3} (1+\vert x \vert^2)f = \int_{\mathbb R^3} (1+\vert x \vert^2)f(x) =\int_{\mathbb R^3} (1+\vert x \vert^2)f(x) \ dx.\]

We recall the following elementary bound which links control on weighted Sobolev norms to spatial decay in the $L^2$-sense
\begin{rem}
Let $\phi$ be a function in $H_{\eta}(\RR^3)$ then it follows that for all $R>0$
\label{lm:InequalityForNormOutsideBall}
\[        ||\phi\indic_{B_R(0)^c}||_{L^2(\RR^3)}\leq||\phi||_{H_\eta(\RR^3)}(1+R^2)^{-\eta/2}\leq||\phi||_{H_\eta(\RR^3)}R^{-\eta}.\]
\end{rem}
\smallsection{Outline of the article}
\begin{itemize}
\item In Section \ref{sec:magn}, we derive estimates on weighted solutions to the magnetic linear Schrödinger equation. Theorem \ref{theo:H2ExistBoundedMag} contains bounds on solutions to the linear Schrödinger equation with bounded magnetic fields and Theorem \ref{theo:magfield} contains such estimates for constant magnetic fields. In Theorem \ref{theo:Dirichlet}, we prove a quantitative reduction to an auxiliary boundary value problem.
\item In Section \ref{sec:Hartree}, we derive weighted bounds on solutions to the Hartree equation, see Theorem \ref{theo:HF} and prove a quantitative reduction to a boundary value problem in Theorem \ref{theo:auxBVP}.
\item In Section \ref{sec:numerics}, we introduce a convergent numerical discretization scheme and prove the convergence with an explicit rate, cf. Theorem \ref{theo:numerics}.
\item Section \ref{sec:OCT} discusses applications of our results to optimal control theory.
\item The final Section \ref{sec:numex} illustrates numerical consequences of our analysis.
\end{itemize}
\section{The magnetic linear Schrödinger equation}
\label{sec:magn}
We start by recalling the following classical Lemma \cite{BH,BKP} that shows that the free Schrödinger equation is well-posed in the weighted Sobolev space $H^2_2$:
\begin{lemm}
\label{lemm:freegroup}
Let $\varphi_0 \in H^2_2$.
There exists a $\psi \in C([0,T];H^2_2)$ solving 
\begin{equation*}\begin{split}
i \partial _t \psi &= -\Delta \psi \text{ with initial condition }\psi(x,0) = \varphi_0(x)        
\end{split}\end{equation*}
such that $\left\lVert \psi(\bullet, t) \right\rVert_{H^2_2} \leq C_T \left\lVert \varphi_0(\bullet) \right\rVert_{H^2_2}.$
\end{lemm}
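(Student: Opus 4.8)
\emph{Proof proposal.} The plan is to produce the solution explicitly as $\psi(\cdot,t)=e^{it\Delta}\varphi_0$ — with the sign convention $i\partial_t\psi=-\Delta\psi$ this means $\partial_t\psi=i\Delta\psi$ — and then to bound the two contributions to the $H^2_2$ norm separately. The Sobolev part is free: $e^{it\Delta}$ is a unitary group on $L^2$ that commutes with every $\partial^\alpha$, so $\|\partial^\alpha\psi(\cdot,t)\|_2=\|\partial^\alpha\varphi_0\|_2$ for $|\alpha|\le 2$ and hence $\|\psi(\cdot,t)\|_{H^2}=\|\varphi_0\|_{H^2}$ for all $t$. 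The weighted part $\||x|^2\psi(\cdot,t)\|_2$ is where the work lies, and I would extract it from the conjugation identity
\[ x_j\,e^{it\Delta}=e^{it\Delta}\,(x_j-2it\,\partial_{x_j}), \]
which follows on $\mathcal S(\RR^3)$ by differentiating $e^{-it\Delta}x_je^{it\Delta}$ in $t$ and using the commutator $[\Delta,x_j]=2\partial_{x_j}$ together with the fact that $\partial_{x_j}$ commutes with $e^{\pm it\Delta}$. Iterating this in $x_j$ and summing over $j$ gives
\[ |x|^2e^{it\Delta}\varphi_0=e^{it\Delta}\Big(|x|^2-2it\,(2\,x\cdot\nabla+3)-4t^2\Delta\Big)\varphi_0, \]
so by unitarity $\||x|^2\psi(\cdot,t)\|_2\le\||x|^2\varphi_0\|_2+4|t|\,\|x\cdot\nabla\varphi_0\|_2+6|t|\,\|\varphi_0\|_2+4t^2\,\|\Delta\varphi_0\|_2$.

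Every term on the right is controlled by $\|\varphi_0\|_{H^2_2}$ except the mixed term $\|x\cdot\nabla\varphi_0\|_2$, and this is the single genuine point of the proof. Since $\|x\cdot\nabla\varphi_0\|_2\le\sum_{j}\|x_j\partial_{x_j}\varphi_0\|_2\le 3\,\big\||x|\,|\nabla\varphi_0|\big\|_2$, it suffices to establish the interpolation-type bound $\big\||x|\,|\nabla\varphi_0|\big\|_2^2\lesssim\|\varphi_0\|_{H^2_2}^2$. I would prove this for $\varphi_0\in\mathcal S(\RR^3)$ by integrating by parts: writing $\nabla\!\cdot\!\big(|x|^2\nabla\varphi_0\big)=2\,x\cdot\nabla\varphi_0+|x|^2\Delta\varphi_0$ and using $-\Re\!\int 2\,\bar\varphi_0\,x\cdot\nabla\varphi_0=-\!\int x\cdot\nabla\big(|\varphi_0|^2\big)=3\|\varphi_0\|_2^2$, one gets $\int|x|^2|\nabla\varphi_0|^2=3\|\varphi_0\|_2^2-\Re\!\int|x|^2\bar\varphi_0\,\Delta\varphi_0\le 3\|\varphi_0\|_2^2+\||x|^2\varphi_0\|_2\,\|\Delta\varphi_0\|_2$ by Cauchy–Schwarz. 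Combining, $\|\psi(\cdot,t)\|_{H^2_2}\le C_T\|\varphi_0\|_{H^2_2}$ on $[0,T]$ with $C_T$ growing like $1+T+T^2$.

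It remains to handle existence and the continuity $\psi\in C([0,T];H^2_2)$, for which I would argue by density. For $\varphi_0\in\mathcal S(\RR^3)$ the curve $t\mapsto e^{it\Delta}\varphi_0$ takes values in $H^2_2$, depends continuously (indeed smoothly) on $t$, and solves $i\partial_t\psi=-\Delta\psi$ classically. Since $\mathcal S(\RR^3)$ is dense in $H^2_2$ and the estimate above — applied to $\varphi_0^{(n)}-\varphi_0^{(m)}$ — shows that $\varphi_0\mapsto\big(t\mapsto e^{it\Delta}\varphi_0\big)$ is bounded from $H^2_2$ into $C([0,T];H^2_2)$, this map extends by continuity; the limit $\psi$ lies in $C([0,T];H^2_2)$, satisfies the claimed bound, and solves the equation in $\mathcal D'$ — and in fact in $L^2$, where $\psi(\cdot,t)=e^{it\Delta}\varphi_0$. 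The only real obstacle is the weighted estimate, and within it the mixed term $\|x\cdot\nabla\varphi_0\|_2$; everything else is bookkeeping with the explicit propagator.
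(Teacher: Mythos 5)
Your argument is correct. Note, however, that the paper does not prove this lemma at all: it is recalled as a classical fact with a citation to \cite{BH,BKP}, and the free group $S(t)=e^{i\Delta t}$ on $H^2_2$ is then used as a black box in Lemma \ref{lemm:existenxe}. So there is no in-paper proof to match; what you supply is a self-contained derivation, and it is a clean one. The sign conventions and the conjugation identity $x_je^{it\Delta}=e^{it\Delta}(x_j-2it\partial_{x_j})$ are right (it is the Heisenberg evolution of the position operator for the free flow), the squared identity $|x|^2e^{it\Delta}=e^{it\Delta}\bigl(|x|^2-2it(2\,x\cdot\nabla+3)-4t^2\Delta\bigr)$ and the resulting triangle-inequality bookkeeping check out, and the integration-by-parts interpolation $\int|x|^2|\nabla\varphi_0|^2\le 3\|\varphi_0\|_2^2+\||x|^2\varphi_0\|_2\|\Delta\varphi_0\|_2$ correctly closes the one term not directly controlled by $\|\varphi_0\|_{H^2_2}$; you even get the quantitative growth $C_T\lesssim 1+T+T^2$, which is sharper than what a Gronwall-type weighted energy estimate (the technique the paper uses for the perturbed equations in Lemma \ref{lemm:H11} and Theorem \ref{theo:H2ExistBoundedMag}, and presumably the route in the cited references) would give, namely constants exponential in $T$. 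The only places where you lean on unproved but standard facts are the density of $\mathcal S(\RR^3)$ in $H^2_2$ (truncation plus mollification) and the $t$-continuity of $t\mapsto e^{it\Delta}\varphi_0$ in $H^2_2$ for Schwartz data (which follows from your own identity together with strong continuity of $e^{it\Delta}$ on $L^2$); a one-line justification of each would make the write-up complete, but neither is a gap.
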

As a first step to study the magnetic linear Schrödinger equation, we show the existence of solutions to a regularized version of the magnetic linear Schrödinger equation with discrete derivatives, for some regularization parameter $h>0,$
\begin{equation}
\label{eq:discderiv}
(\nabla_{h}\psi)(x):=\left(\frac{\psi(x+he_1)-\psi(x)}{ h},..,\frac{\psi(x+he_3)-\psi(x)}{ h}\right)
\end{equation}
coupled to a bounded magnetic vector potential and regularized potentials. 
\begin{lemm}
\label{lemm:existenxe}
Let $V \in L^\infty((0,T);C_b^2(\RR ^3))$ and $A \in W^{2,\infty}(\RR ^3;\RR^3)$. 
For initial states $\varphi_0 \in H^2_2(\RR^3)$, the equation
\begin{equation}
\begin{split}
\label{eq:schroedinger}
    i \partial _t \psi(x,t) &=- \Delta \psi(x,t) + V(x,t) \psi(x,t) + iA(x) \cdot (\nabla_{h}  \psi)(x,t)+i (\nabla_{-h} \cdot (A\psi))(x,t)  \\
    \psi(x,0) &= \varphi_0(x)
\end{split}
\end{equation}
has a unique mild solution $ \psi \in C([0,T];H^2_2(\RR^3))$. 
Furthermore, if $\rho = \left \lVert V \right \rVert _{L^\infty(0,T;C_b^2(\RR ^3))}$, $\mu = \left \lVert A \right \rVert _{W^{2,\infty}}$, then there exists $C_{T,\rho,\mu,h}$ such that for all $t \in [0,T]$, 
$\left\lVert \psi(\bullet, t) \right\rVert_{H^2_2} \leq C_{T,\rho,\mu,h} \left\lVert \varphi_0(\bullet) \right\rVert_{H^2_2}. $
\end{lemm}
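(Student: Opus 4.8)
The plan is to treat \eqref{eq:schroedinger} as a bounded perturbation of the free Schrödinger equation and solve it by a Duhamel fixed-point argument in $C([0,T];H^2_2)$, then extract the a priori bound from Grönwall's inequality. Write the perturbation as $B(t)\psi := V(\bullet,t)\psi + iA\cdot(\nabla_h\psi) + i\bigl(\nabla_{-h}\cdot(A\psi)\bigr)$. By Lemma \ref{lemm:freegroup} the free propagator $U_0(t)=e^{it\Delta}$ restricts to a strongly continuous group on $H^2_2$ with $\|U_0(t)\|_{\mathcal L(H^2_2)}\le C_T$ for $t\in[0,T]$, so a mild solution of \eqref{eq:schroedinger} is by definition a fixed point of
$$\Phi(\psi)(t)=U_0(t)\varphi_0-i\int_0^t U_0(t-s)B(s)\psi(s)\,ds.$$

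The one genuinely non-routine step is to prove that $B(t)$ is bounded on $H^2_2$ uniformly in $t\in[0,T]$, with norm $\le C_{\rho,\mu,h}$. Multiplication by $V(\bullet,t)$ is bounded on $H^2$ with norm $\lesssim\|V(\bullet,t)\|_{C^2_b}\le\rho$ (two derivatives may hit $V$, which is why $C^2_b$ is needed), and since the weight $|x|^2$ commutes with multiplication operators it is bounded on the weighted part with norm $\lesssim\rho$; similarly multiplication by the vector field $A$ is bounded on $H^2_2$ with norm $\lesssim\|A\|_{W^{2,\infty}}=\mu$. It remains to bound $\nabla_{\pm h}$ on $H^2_2$. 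Since $\nabla_{\pm h}$ is a linear combination of translations $\tau_{\pm he_j}$ scaled by $h^{-1}$ and translations are isometries on $H^2$, one gets $\|\nabla_{\pm h}\psi\|_{H^2}\lesssim h^{-1}\|\psi\|_{H^2}$; for the weighted part one uses $|x|^2(\tau_{he_j}\psi)(x)=\bigl(\tau_{he_j}(|x-he_j|^2\psi)\bigr)(x)$ together with $|x-he_j|^2\le(|x|+h)^2$ and $\||x|\psi\|_2\le\||x|^2\psi\|_2+\|\psi\|_2$ to obtain $\||x|^2\nabla_{\pm h}\psi\|_2\lesssim h^{-1}(1+h)^2\|\psi\|_{H^2_2}$. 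Composing the multiplication operators with the discrete gradients and summing, $B(t):H^2_2\to H^2_2$ has norm $\le C_{\rho,\mu,h}$ for a.e.\ $t$, and $t\mapsto B(t)$ is strongly measurable and essentially bounded as an $\mathcal L(H^2_2)$-valued map, since the only time dependence sits in the multiplier $V(\bullet,t)\in L^\infty(0,T;C^2_b)$.

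Given this, standard estimates give $\|\Phi(\psi_1)-\Phi(\psi_2)\|_{C([0,\tau];H^2_2)}\le \tau\,C_TC_{\rho,\mu,h}\,\|\psi_1-\psi_2\|_{C([0,\tau];H^2_2)}$, so $\Phi$ is a contraction on $C([0,\tau];H^2_2)$ for any $\tau<(C_TC_{\rho,\mu,h})^{-1}$; since $\tau$ is independent of $\varphi_0$, concatenating the unique local solutions over $\lceil T/\tau\rceil$ subintervals produces the unique mild solution $\psi\in C([0,T];H^2_2)$. Taking $H^2_2$-norms in the fixed-point equation yields $\|\psi(t)\|_{H^2_2}\le C_T\|\varphi_0\|_{H^2_2}+C_TC_{\rho,\mu,h}\int_0^t\|\psi(s)\|_{H^2_2}\,ds$, and Grönwall's inequality gives $\|\psi(t)\|_{H^2_2}\le C_Te^{C_TC_{\rho,\mu,h}T}\|\varphi_0\|_{H^2_2}=:C_{T,\rho,\mu,h}\|\varphi_0\|_{H^2_2}$.

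The main obstacle is the weighted-norm estimate on the discrete gradient terms; everything else is the usual "$C_0$-group plus bounded perturbation" machinery. Note the constant produced this way degenerates like $h^{-1}$ as $h\to0$, which is harmless here since $h>0$ is fixed — obtaining estimates uniform in $h$ is precisely the point of Theorems \ref{theo:H2ExistBoundedMag} and \ref{theo:magfield}.
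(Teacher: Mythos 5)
Your argument is correct and is essentially the paper's proof: both treat the equation via Duhamel's formula as a perturbation of the free group from Lemma \ref{lemm:freegroup}, with the key input being that multiplication by $V$, multiplication by $A$, and the discrete gradients $\nabla_{\pm h}$ are bounded on $H^2_2$ with norm $C_{\rho,\mu,h}$ (degenerating as $h\to 0$), followed by a Banach fixed-point argument and Gr\"onwall. The only difference is a standard technical device: the paper obtains a contraction on all of $[0,T]$ at once by using the exponentially weighted norm $\sup_t e^{-\lambda t}\lVert\psi(t)\rVert_{H^2_2}$ with $\lambda$ large, whereas you contract on a short interval of length $\tau$ independent of the data and concatenate; these are interchangeable.
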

\begin{proof}
Let $Y = C([0,T];H^2_2(\RR^3))$ with norm $\lVert \psi \rVert_Y = \sup_{t \in [0,T]} e^{-\lambda t}  \left\lVert \psi(\bullet, t) \right\rVert_{H^2_2(\RR^3)}$ for some $\lambda > 0$ to be specified later. In terms of the one-parameter group $S(t)=e^{i \Delta t}$, solving the equation \eqref{eq:schroedinger} is equivalent to solving the fixed-point equation
\[\psi(t) = S(t)\varphi_0 - i \int_0^t S(t-s) \left(\psi(s)V(s) + (iA \cdot \nabla_{h} \psi)(s) + (i\nabla_{-h} \cdot (A\psi))(s) \right) ds.   \]


Let $B_R = \{ \psi \in Y : \lVert \psi \rVert_Y \leq R \}$, for some $R$ to be specified later, and define a map $\Phi: B_R \to B_R$ by 
\[
    \Phi(\psi(t)) = S(t)\varphi_0 - i \int_0^t S(t-s)\left(\psi(s)V(s) + (iA \cdot \nabla_{h} \psi)(s) + (i\nabla_{-h} \cdot (A\psi))(s) \right) ds.
\]

To see that $\Phi$ maps $B_R$ into itself, first notice that
\begin{equation*}
\begin{split}
 \left\lVert \psi(s)V(s) \right\rVert_{H^2_2}  &\lesssim \left\lVert \Delta (\psi(s)V(s)) \right\rVert_{2} + \left\lVert (1+|\bullet|^2) \psi(\bullet,s)V(\bullet,s) \right\rVert_{2} \\
 &\lesssim \left\lVert \Delta \psi(s)V(s) + \nabla \psi(s) \cdot \nabla V (s) + \psi \Delta V \right\rVert_{2} + \rho \lVert \psi(s) \rVert_{H_2} \le C_\rho \left\lVert \psi(s) \right\rVert_{H^2_2}.
\end{split}
\end{equation*}

We then use in addition that $[\nabla_h, \Delta]=0$
\begin{equation*}
\begin{split}
  \left\lVert A \cdot \nabla_{h}\psi(s) \right\rVert_{H^2_2}  &\lesssim \left\lVert \Delta (A \cdot \nabla_{h}\psi(s)) \right\rVert_{2} + \left\lVert (1+|\bullet|^2) A(\bullet) \cdot \nabla_{h}\psi(\bullet,s) \right\rVert_{2} \\
 &\leq C_{\mu, h} \left\lVert \psi(s) \right\rVert_{H^2_2}
\end{split}
\end{equation*}
and
\begin{equation*}
\begin{split}
  \left\lVert \nabla_{-h} \cdot (A \cdot \psi)(s) \right\rVert_{H^2_2}  &\lesssim \left\lVert \Delta (\nabla_{-h} \cdot (A \cdot \psi)(s)) \right\rVert_{2} + \left\lVert (1+|\bullet|^2) \nabla_{h} \cdot (A \cdot \psi)(s) \right\rVert_{2} \\
 &\leq C_{h}  (\left\lVert \Delta (A \cdot \psi)(s) \right\rVert_{2} +  \mu \lVert \psi(s) \rVert_{H_2}) \leq C_{\mu, h} \left\lVert \psi(s) \right\rVert_{H^2_2}.
\end{split}
\end{equation*}
Let $C_T$ be the constant from Lemma \ref{lemm:freegroup}, let $2(C_\rho + C_{\mu, h}) C_T \leq \lambda $ , and let $R \ge 2 C_T \left\lVert \varphi_0 \right\rVert_{H^2_2}.$
Hence, we find that, $\Phi$ is indeed a contraction:

\begin{equation*}
    \begin{split}
    \lVert \Phi(\psi_1) - \Phi(\psi_2) \rVert_Y & = \sup_t \left( e^{-\lambda t}\left\lVert  \int_0^t S(t-s)  \left(V(s) + iA \cdot \nabla_{h}+ i\nabla_{-h} \cdot A  \right)   (\psi_1 - \psi_2)(s) ds \right\rVert_{H^2_2} \right) \\
    & \leq C_T (C_\rho + C_{\mu, h}) \sup_t \left( e^{-\lambda t} \int_0^t e^{\lambda s}e^{-\lambda s} \left\lVert (\psi_1 - \psi_2)(s) \right\rVert_{H^2_2} ds \right) \\
    & \leq C_T (C_\rho + C_{\mu, h}) \lVert \psi_1 - \psi_2 \rVert_Y \sup_t \left( e^{-\lambda t} \int_0^t e^{\lambda s} ds \right) \leq \frac{1}{2} \lVert \psi_1 - \psi_2 \rVert_Y.
\end{split}
\end{equation*}
Therefore $\Phi$ has a unique fixed point in $Y$.

The previous estimate immediately implies that $\left\lVert \psi(\bullet, t) \right\rVert_{H^2_2} \leq C_{T,\rho, \mu, h} \left\lVert \varphi_0(\bullet) \right\rVert_{H^2_2} $.
\end{proof}
\begin{rem}
We want to mention at this point that both Lemma \ref{lemm:freegroup} and Lemma \ref{lemm:existenxe} hold in $H^1_1$ as well by very similar arguments. 
\end{rem}
In the following Lemma, we get rid of the discrete differentiation of the magnetic vector potential that we used in Lemma \ref{lemm:existenxe} to show the existence of solutions.
\begin{lemm}
\label{lemm:H11}
Let $T>0$ and potentials as in Assumption \ref{ass:ass1} such that $ \left\lVert u \right \rVert _{W^{1,1}(0,T)}  = \alpha$, $\left\lVert |\bullet|^{-2}V_{\operatorname{con}}(x) \right \rVert _{L^\infty}  \leq \rho,$ and $\left\lVert |\bullet|^{-2}W_{\operatorname{reg}}(x) \right \rVert _{L^\infty}  \leq \rho.$     
For vector potentials $A  \in W^{1,\infty}(\RR^3;\RR^3)$ with $\lVert A \rVert_{W^{1,\infty}} = \mathcal{A}$, 
there exists a constant $C_{T,\alpha,\rho}$ such that for any $\varphi_0 \in H^1_1$, the equation
\begin{equation*}\begin{split}
    i \partial _t \psi(x,t) &= ( -i\nabla -A)^2 \psi(x,t) + V \psi(x,t)  \\
    \psi(x,0) &= \varphi_0(x)
\end{split}\end{equation*}
has a unique solution $\psi \in L^\infty(0,T;H^1_1)$, and $\psi$ satisfies $   \left \lVert \psi \right \rVert _{L^\infty(0,T;H^1_1)} \leq C_{T,\alpha,\rho,\mathcal A} \lVert \varphi_0 \rVert _{H^1_1}.$
\end{lemm}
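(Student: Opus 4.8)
The plan is to realize the solution as a limit of regularized solutions supplied by Lemma~\ref{lemm:existenxe} (and the remark following it), and to carry through that limit an \emph{a priori} bound in $H^1_1$ which does not feel the regularization.

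\textbf{Step 1 (regularizing the data).} Replace $A$ by the mollification $A_\delta=A*\varrho_\delta\in W^{2,\infty}$ with $\lVert A_\delta\rVert_{W^{1,\infty}}\le\mathcal A$; replace $W_{\operatorname{sing}}$ by $W_{\operatorname{sing},\delta}=W_{\operatorname{sing}}*\varrho_\delta\in C^2_b$ with $\lVert W_{\operatorname{sing},\delta}\rVert_{2}\le\lVert W_{\operatorname{sing}}\rVert_{2}$; and replace $W_{\operatorname{reg}},V_{\operatorname{con}}$ by truncated-and-mollified $W^{(n)}_{\operatorname{reg}},V^{(n)}_{\operatorname{con}}\in C^\infty_c$, retaining $\lVert\langle\bullet\rangle^{-2}W^{(n)}_{\operatorname{reg}}\rVert_\infty,\lVert\langle\bullet\rangle^{-2}V^{(n)}_{\operatorname{con}}\rVert_\infty\le C\rho$. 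Since $u\in L^\infty(0,T)$ with $\lVert u\rVert_{L^\infty(0,T)}\le C_T\alpha$, the resulting time-dependent potential lies in $L^\infty(0,T;C^2_b)$, so for $\varphi_0\in H^2_2$ Lemma~\ref{lemm:existenxe} produces, for each $h>0$, a unique $\psi_{h,\delta,n}\in C([0,T];H^2_2)$ solving the discretized magnetic equation with these coefficients; general $\varphi_0\in H^1_1$ is reached afterwards by a density argument.

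\textbf{Step 2 (uniform estimate --- the crux).} Write $N(t)=\lVert\psi_{h,\delta,n}(t)\rVert^2_{H^1_1}=\lVert\psi\rVert_2^2+\lVert\nabla\psi\rVert_2^2+\lVert\,|x|\psi\rVert_2^2$. The $L^2$-norm is conserved since the discretized operator is symmetric. Differentiating $\lVert\,|x|\psi\rVert_2^2$ and using $[\,|x|^2,V\,]=0$, only the kinetic and magnetic commutators survive, they are lower order (uniformly for $h\le1$), and a Cauchy--Schwarz bound of the shape $|\Im\langle\psi,x\cdot\nabla\psi\rangle|\le\lVert\,|x|\psi\rVert_2\lVert\nabla\psi\rVert_2$ gives $\bigl|\tfrac{d}{dt}\lVert\,|x|\psi\rVert_2^2\bigr|\lesssim(1+\mathcal A)\,N(t)$. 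For the first derivative I avoid computing $\tfrac{d}{dt}\lVert\nabla\psi\rVert_2^2$ (which generates $\langle\nabla\psi,(\nabla V)\psi\rangle$, whose coefficient is \emph{not} uniform in $n$) and instead track the energy $E(t)=\langle\psi(t),H_{h,\delta,n}(t)\psi(t)\rangle$, for which $\tfrac{d}{dt}E(t)=\dot u(t)\langle\psi,V^{(n)}_{\operatorname{con}}\psi\rangle$ with $|\langle\psi,V^{(n)}_{\operatorname{con}}\psi\rangle|\le C\rho\,(\lVert\psi\rVert_2^2+\lVert\,|x|\psi\rVert_2^2)$, so that $E(t)\le E(0)+C\rho\int_0^t|\dot u(s)|(\lVert\psi\rVert_2^2+\lVert\,|x|\psi\rVert_2^2)\,ds$ and $\int_0^T|\dot u|\le\alpha$ closes this term. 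To recover $\lVert\nabla\psi\rVert_2^2$ I use the lower bound $E(t)\ge\tfrac12\lVert\nabla\psi\rVert_2^2-C_{\mathcal A,\rho,\alpha,\lVert W_{\operatorname{sing}}\rVert_2,T}(\lVert\psi\rVert_2^2+\lVert\,|x|\psi\rVert_2^2)$, obtained by estimating the magnetic terms through $|\langle\psi,A_\delta\cdot\nabla_h\psi\rangle|\le\mathcal A\lVert\psi\rVert_2\lVert\nabla\psi\rVert_2$ and the singular part through $|\langle\psi,W_{\operatorname{sing},\delta}\psi\rangle|\le\lVert W_{\operatorname{sing}}\rVert_2\lVert\psi\rVert_4^2\le\varepsilon\lVert\nabla\psi\rVert_2^2+C_\varepsilon\lVert\psi\rVert_2^2$ (Gagliardo--Nirenberg) and absorbing the $\varepsilon$-term; the same bounds give $E(0)\le C_{\mathcal A,\rho,\alpha,\lVert W_{\operatorname{sing}}\rVert_2,T}\lVert\varphi_0\rVert^2_{H^1_1}$. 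Feeding the three ingredients into one another yields a closed Gronwall inequality for $M(t)=\lVert\psi\rVert_2^2+\lVert\,|x|\psi\rVert_2^2$, hence a bound on $M$, hence on $\lVert\nabla\psi\rVert_2^2$, hence $\sup_{[0,T]}N(t)\le C_{T,\alpha,\rho,\mathcal A,\lVert W_{\operatorname{sing}}\rVert_2}\lVert\varphi_0\rVert^2_{H^1_1}$, uniformly in $(h,\delta,n)$.

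\textbf{Step 3 (limit and uniqueness).} With this bound, $(\psi_{h,\delta,n})$ is bounded in $L^\infty(0,T;H^1_1)$ and, from the equation, $(\partial_t\psi_{h,\delta,n})$ is bounded in $L^\infty(0,T;H^{-1}_{\operatorname{loc}})$; by Banach--Alaoglu and Aubin--Lions, along $(h,\delta,n)\to(0,0,\infty)$ one extracts a limit $\psi\in L^\infty(0,T;H^1_1)$ with convergence weak-$*$ in $L^\infty(0,T;H^1_1)$ and strong in $C([0,T];L^2_{\operatorname{loc}})$, hence in $L^2_{\operatorname{loc}}((0,T)\times\RR^3)$. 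Since $A_\delta\to A$ locally, $W_{\operatorname{sing},\delta}\to W_{\operatorname{sing}}$ in $L^2$, $W^{(n)}_{\operatorname{reg}}\to W_{\operatorname{reg}}$ and $V^{(n)}_{\operatorname{con}}\to V_{\operatorname{con}}$ in $L^2_{\operatorname{loc}}$, and $\nabla_h\to\nabla$, one passes to the limit in the weak formulation and identifies $\psi$ as a solution; weak lower semicontinuity of the $H^1_1$-norm gives $\lVert\psi\rVert_{L^\infty(0,T;H^1_1)}\le C_{T,\alpha,\rho,\mathcal A,\lVert W_{\operatorname{sing}}\rVert_2}\lVert\varphi_0\rVert_{H^1_1}$. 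For uniqueness: if $\psi_1,\psi_2\in L^\infty(0,T;H^1_1)$ both solve the equation, then $w=\psi_1-\psi_2$ has $w(0)=0$, and since $\langle w,H(t)w\rangle$ is a well-defined real quadratic form on $H^1_1$ (the singular part being relatively form-bounded, and $|x|w\in L^2$ accommodating $W_{\operatorname{reg}}$ and $V_{\operatorname{con}}$), one obtains $\tfrac{d}{dt}\lVert w\rVert_2^2=2\Im\langle w,H(t)w\rangle=0$, whence $w\equiv0$.

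\textbf{Expected main obstacle.} Step~2 is where the real difficulty lies: both the first-order magnetic term $A\cdot\nabla\psi$ and the quadratically growing $W_{\operatorname{reg}}$ would, in a naive perturbative $\lVert\nabla\psi\rVert_2^2$-estimate, either cost a derivative or introduce regularization-dependent constants; the remedy is to run the estimate on the magnetic energy $\langle\psi,H(t)\psi\rangle$ and recover $\lVert\nabla\psi\rVert_2^2$ from it after absorbing the $L^2$-singular part via Gagliardo--Nirenberg, and this must be executed for the \emph{discretized} operator with every constant independent of $h$.
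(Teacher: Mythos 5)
Your proposal is correct and follows essentially the same route as the paper's proof: regularize the potentials and discretize the magnetic derivative so that Lemma~\ref{lemm:existenxe} applies, then prove a regularization-uniform $H^1_1$ bound by combining the weighted estimate for $\lVert\,|x|\psi\rVert_2^2$, the energy identity $\tfrac{d}{dt}\langle\psi,H(t)\psi\rangle=u'(t)\langle\psi,V_{\operatorname{con}}\psi\rangle$ (which is exactly the paper's identity \eqref{eq:auxestm}, obtained there by pairing with $\partial_t\bar\psi_\epsilon$), and relative form-boundedness of $W_{\operatorname{sing}}$ plus Cauchy--Schwarz for the magnetic and growing terms, closed by Gr\"onwall. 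The only differences are organizational: the paper runs Gr\"onwall on the combined functional $E_\lambda^\epsilon=\lVert\psi_\epsilon\rVert^2_{H^1}+\lambda\lVert\psi_\epsilon\rVert^2_{H_1}$ whereas you iterate on the weighted part and recover $\lVert\nabla\psi\rVert_2^2$ from the energy afterwards, and your Step~3 (compactness limit, uniqueness) makes explicit what the paper leaves implicit.
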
 

\begin{proof}
First, approximate the potentials by elements of $L^\infty(0,T;C_b^2(\RR ^3))$ in the following way:

Let $\chi \in C^\infty_c (\RR^3)$,  where $0 \leq \chi$, chosen symmetric $\chi(x) = \chi(-x)$ and normalized $\int_{\RR^3} \chi (x) dx = 1$. We define the rescaled function $\zeta_\epsilon (x) = \frac{1}{\epsilon^3} \chi \left( \frac{x}{\epsilon} \right).$ Using the truncation map $(\operatorname{Trunc}_\epsilon f)(x) = \operatorname{sign}(f(x)) \max \left( \frac{1}{\epsilon} , f(x) \right)$, we define for $i \in \{\operatorname{reg}, \operatorname{con}\}$
\begin{equation}
\label{eq:regularizedV}
 V_i^\epsilon (x)  :=( \zeta_\epsilon * \operatorname{Trunc}_\epsilon (V_i))(x) = \int_{\RR^3} \operatorname{Trunc}_\epsilon (V_i(x+\epsilon y)) \chi (y) dy.
 \end{equation}
 Finally, let $W_{\operatorname{sing}}^{\varepsilon}:=\zeta_\epsilon*W_{\operatorname{sing}}.$

 By properties of convolutions it can be shown that 
 \begin{equation*}
 \begin{split}
   \left\lVert (1+|\bullet|^{2})^{-1}W_{\operatorname{reg}}^\epsilon(\bullet) \right \rVert _{L^\infty}  &\leq  \rho\text{ and }  
   \left\lVert(1+|\bullet|^{2})^{-1} V_{\operatorname{con}}^\epsilon(\bullet) \right \rVert _{L^\infty}  \leq  \rho.
 \end{split}
 \end{equation*}
 
For the discrete derivative 
\begin{equation}
\label{eq:discd}
 D_{h} \psi =-i\nabla_h \psi,
\end{equation} with $\nabla_h$ as in \eqref{eq:discderiv}, we have by Lemma \ref{lemm:existenxe}, that for all $\epsilon>0$ , there exists $\psi_\epsilon \in L^\infty(0,T ; H^2_2 )$ satisfying
 \begin{equation*}\begin{split}
    i \partial _t \psi(x,t)_\epsilon &=(H_B^\epsilon + V^\epsilon) \psi_\epsilon(x,t)  \\
    \psi_\epsilon(x,0) &= \varphi_0(x)
\end{split}\end{equation*}
where $H_B^\epsilon = -\Delta  - A D_{\epsilon}-D_{-\epsilon} A + A \cdot A$ is the regularized magnetic Schrödinger operator.
We then intend to bound the $H^1_1$ norm of $\psi_\epsilon$ uniformly in $\epsilon$.

We multiply the preceding equation by $(1+\vert x \vert^2)\bar \psi_{\varepsilon}(x)$ and integrate over $\RR^3$

\[\int_{\mathbb R^3} (1+\vert x \vert^2) (\overline{\psi_{\varepsilon}}i\partial_t  \psi_{\varepsilon})(x,t) + (1+\vert x \vert^2) (\overline{\psi_{\varepsilon}} H_B^\epsilon \psi_{\varepsilon})(x,t) +  (1+\vert x \vert^2)V^{\varepsilon}(x) \vert \psi_{\varepsilon}(x,t) \vert^2 \ dx =0.\]

Thus by taking the imaginary part, we find 

\[ \Im \int_{\mathbb R^3} (1+\vert x \vert^2) (\overline{\psi_{\varepsilon}}i\partial_t  \psi_{\varepsilon})(x,t) - (1+\vert x \vert^2)( \psi_{\varepsilon} H_B^\epsilon \psi_{\varepsilon})(x,t)  \ dx =0.\]

In the first term, we note that $\Im(i\overline{\psi_{\varepsilon}}\partial_t  \psi_{\varepsilon} )= \Re(\overline{\psi_{\varepsilon}}\partial_t  \psi_{\varepsilon})= \frac{1}{2} \partial_t \vert \psi_{\varepsilon}\vert^2.$ This implies that 
\begin{equation*}
\begin{split}
    \partial_t \int_{\mathbb R^3} (1+\vert x \vert^2) \vert \psi_{\varepsilon}(x,t) \vert^2 \ dx & =2\Im \int_{\RR^3} (1+\vert x \vert^2) (\overline{\psi_{\varepsilon}} H_B^\epsilon \psi_{\varepsilon} )(x,t) \ dx \\
     &=2\Im \int_{\RR^3} (1+\vert x \vert^2) (\overline{\psi_{\varepsilon}} \left( -\Delta - A D_{\epsilon}-D_{-\epsilon} A \right) \psi_\epsilon)(x,t) \ dx.
\end{split}
\end{equation*}  

Integration by parts yields 
\begin{equation*}
    \begin{split}
-2 \Im \int_{\RR^3} (1+\vert x \vert^2) (\bar \psi_{\varepsilon} \Delta \psi_{\varepsilon})(x)  \ dx 
&=2\Im \int_{\RR^3}\underbrace{(1+\vert x \vert^2) \vert \nabla \psi_{\varepsilon}(x) \vert^2}_{\in \mathbb R} \ dx +2 \Im \int_{\RR^3} 2 \bar \psi_{\varepsilon}(x) x \cdot \nabla   \psi_{\varepsilon}(x) \ dx \\
 &= \Im \int_{\RR^3} 4 \overline{ \psi_{\varepsilon}(x)} x \cdot \nabla   \psi_{\varepsilon}(x) \ dx  \leq  \int_{\RR^3} 4  |\psi_{\varepsilon}(x)| |x|  |\nabla \psi_{\varepsilon}(x)| \ dx.
    \end{split}
\end{equation*}
Hence, using that $ab \le \frac{a^2}{2}+\frac{b^2}{2}$, this implies
\begin{equation*}
    \begin{split}
      -2 \Im \int_{\RR^3} (1+\vert x \vert^2) \bar \psi_{\varepsilon} \Delta \psi_{\varepsilon}  \ dx  &\leq 2 \left( \int_{\RR^3} |x|^2 \vert \psi_{\varepsilon}\vert^2 \ dx+  \int_{\RR^3} \vert \nabla   \psi_{\varepsilon}\vert^2  dx \right).  
    \end{split}
\end{equation*}

In terms of $F(x)=(1+\vert x \vert^2)A(x)$, we find

\begin{equation*}
    \begin{split}
\int_{\RR^3}(1+\vert x\vert^2) \overline{\psi(x)}((A D_{\epsilon} + D_{-\epsilon} A) \psi)(x) \ dx &=\int_{\RR^3} \overline{\psi(x)} ((F D_{\epsilon} + D_{-\epsilon} F) \psi)(x) \ dx \\
& \quad + \int_{\RR^3}\overline{\psi(x)}([(1+\vert x \vert^2),D_{-\epsilon}]A \psi)(x) \ dx.
\end{split}
\end{equation*}

We now observe that $\int_{\RR^3}\overline{\psi(x)} ((F D_{\epsilon} + D_{-\epsilon} F) \psi)(x) \ dx \in \RR$ which is because $F D_{\epsilon} +D_{-\epsilon} F$ is self-adjoint.

Observe then that for $g(x)=(f(x-he_1)(D_{-\varepsilon})_1x^2,..,f(x-he_1)(D_{-\varepsilon})_3x^2)$, where $(D_{-\varepsilon})_i$ is the discretized derivative, \eqref{eq:discd}, in direction $i$, we have
\begin{equation*}
    \begin{split}
([(1+x^2),D_{-\epsilon}]f)(x) 
&=(1+x^2)(D_{-\epsilon} f)(x) -D_{-\epsilon}((1+x^2)f(x))\\
&= (1+x^2) D_{-\epsilon}f(x) - (1+x^2) D_{-\epsilon}f(x) +g(x)= g(x).
    \end{split}
\end{equation*}
This implies that
\begin{equation*}
    \Im \int_{\RR^3}(1+\vert x\vert^2) \overline{\psi(x)}((AD_{\epsilon}+D_{-\epsilon}A) \psi)(x) \ dx= 2\int_{\RR^3} \overline{\psi(x)}\sum_{i=1}^3 A_i\psi(x-he_i)(D_{-\varepsilon})_ix^2 \ dx.
\end{equation*}

This last term can then be estimated by $\int_{\RR^3} (1+x^2) \vert \psi(x) \vert^2$, since we assume in this Lemma that $A$ is bounded. Therefore, 
\begin{equation}
\label{eq:timeestm}
\frac{d}{dt} \lVert \psi_\epsilon \rVert _{H_1}^2 \leq (2+2\mathcal{A})\lVert \psi_\epsilon \rVert _{H_1}^2 + 2\lVert \psi_\epsilon \rVert _{H^1}^2.
\end{equation}
Starting from  $i\partial_t \psi_{\varepsilon} = H_B^\epsilon \psi_{\varepsilon} + V^{\varepsilon} \psi_{\varepsilon},$ we multiply this equation by $\partial _t \bar \psi_{\varepsilon}$, take the real part, and integrate the expression over $\mathbb R^3$. This yields
\begin{equation}
\begin{split}
\label{eq:identity1234}
    0 &= \Re \int_{\RR^3}- \Delta \psi_{\varepsilon} \partial _t \bar \psi_{\varepsilon}  - (\partial _t \bar \psi_{\varepsilon}) (AD_{\epsilon} +D_{-\epsilon}A)\psi_\epsilon + A \cdot A \psi_\epsilon \partial _t \bar \psi_{\varepsilon}  + V^{\varepsilon} \psi_{\varepsilon} \partial _t \bar \psi_{\varepsilon} \ dx.
\end{split}
\end{equation}
To treat the second term, in the above integrand, notice that
\begin{equation*}
\begin{split}
      \int_{\RR^3} \partial _t (\bar \psi_{\varepsilon} (AD_{\epsilon} +D_{-\epsilon}A)\psi_\epsilon)(x) \ dx 
      =2 \Re \int_{\RR^3} (\partial _t \bar \psi_{\varepsilon})(x) (AD_{\epsilon} +D_{-\epsilon}A)(x)\psi_\epsilon(x) \  dx
\end{split}
\end{equation*}
by self-adjointness.
Therefore, the identify in \eqref{eq:identity1234} reduces to 
\begin{equation*}
\begin{split}
    0 & = \Re \int_{\RR^3}  \nabla \psi_{\varepsilon} \cdot  \partial _t \nabla \bar \psi_{\varepsilon}   - (\partial _t \bar \psi_{\varepsilon}) (AD_{\epsilon} +D_{-\epsilon}A)\psi_\epsilon + |A|^2 \psi_\epsilon \partial _t \bar \psi_{\varepsilon} +  V^{\varepsilon}\psi_{\varepsilon} \partial _t \bar \psi_{\varepsilon} \ dx  \\
    & =  \frac{1}{2}  \int_{\RR^3} \partial _t |\nabla \psi_{\varepsilon}|^2  - \partial _t (\bar \psi_{\varepsilon} (AD_{\epsilon} +D_{-\epsilon}A)\psi_\epsilon) + \partial_t |A \psi_\epsilon|^2  + V^{\varepsilon} \partial _t \left( |\psi_{\varepsilon}|^2 \right) \ dx
\end{split}
\end{equation*}
and hence

\begin{equation}
\begin{split}
\label{eq:auxestm}
    \frac{d}{dt} \int_{\RR^3}  |\nabla \psi_{\varepsilon}|^2 \ dx   & =  \int_{\RR^3}   (\partial _t V^{\varepsilon}) |\psi_{\varepsilon}|^2 \ dx  \\
    &\quad + \frac{d}{dt} \int_{\RR^3} (\bar \psi_{\varepsilon} (AD_{\epsilon} +D_{-\epsilon}A)\psi_\epsilon) - |A \psi_\epsilon|^2 - V^{\varepsilon} |\psi_{\varepsilon}|^2 \ dx.
\end{split}
\end{equation}

In addition, we recall that
\begin{equation}
\begin{split}
\label{eq:auxestm2}
     \int_{\RR^3} \vert \partial _t V^{\varepsilon} \vert |\psi_{\varepsilon}|^2 \  & \leq |u'(t)| \left \lVert \frac{V_{\operatorname{con}}(\bullet)} {1+|\bullet|^2} \right \rVert _{L^\infty} \lVert \psi_\epsilon (\bullet, t) \rVert_{H^1}^2.
\end{split}
\end{equation}

We now consider an energy $E_\lambda^\epsilon(t)$ defined for $\lambda>0$ to be chosen later
\[
E_\lambda^\epsilon(t) = \lVert \psi_\epsilon(t) \rVert_{H^1}^2 + \lambda \lVert \psi_\epsilon(t) \rVert_{H_1}^2.
\]
Using the preceding estimates and identities \eqref{eq:timeestm}, \eqref{eq:auxestm}, and  \eqref{eq:auxestm2}, we see that
\begin{equation}
\begin{split}
\label{eq:identity}
    \frac{d}{dt}E_\lambda^\epsilon(t) &= \frac{d}{dt} \int_{\RR^3}  |\nabla \psi_{\varepsilon}|^2 \ dx + \lambda \frac{d}{dt} \int_{\RR^3}  (1+|x|^2) |\psi_{\varepsilon}|^2 \ dx \\
     &\leq \frac{d}{dt} \int_{\RR^3}(\bar \psi_{\varepsilon} (AD_{\epsilon} +D_{-\epsilon}A)\psi_\epsilon) - |A \psi_\epsilon|^2 - V^{\varepsilon} |\psi_{\varepsilon}|^2 \ dx + |u'(t)| \left \lVert \frac{V_{\operatorname{con}}^{\varepsilon}(\bullet)} {1+|\bullet|^2} \right \rVert _{L^\infty} \lVert \psi_\epsilon  \rVert_{H^1}^2\\
    & \quad + 2\lambda \int_{\RR^3}  |x|^2 |\psi_{\varepsilon}|^2 \ dx +2\lambda \int_{\RR^3} |\nabla \psi_{\varepsilon}|^2 \ dx \\
    &\leq \frac{d}{dt} \int_{\RR^3}(\overline{ \psi_{\varepsilon}(t)} (AD_{\epsilon} +D_{-\epsilon}A)\psi_\epsilon(t))- |A \psi_\epsilon|^2-V^{\varepsilon} |\psi_{\varepsilon}|^2 \ dx + C \left( 1 + |u'(t)|  \right) E_\lambda^\epsilon(t)
\end{split}
\end{equation}
where $C$ is a constant that depends only on $T$, $\rho,$ and $\lambda$.
Let $\beta(t) = 1 + |u'(t)|$, integration of \eqref{eq:identity} from $0$ to $t$ shows that using the Cauchy-Schwarz inequality and Lemma \ref{EuanRulesForF}
\begin{equation*}
\begin{split}
    E_\lambda^\epsilon(t) &\leq \int_{\RR^3} (\overline{ \psi_{\varepsilon}(t)} (AD_{\epsilon} +D_{-\epsilon}A)\psi_\epsilon(t)) - |A \psi_\epsilon(t)|^2 - V^{\varepsilon}(t) |\psi_{\varepsilon}(t)|^2 \  \\
    & \quad + \left| \int_{\RR^3} (\overline{ \psi_{\varepsilon}(0)} (AD_{\epsilon} +D_{-\epsilon}A)\psi_\epsilon(0)) - |A \psi_\epsilon(0)|^2 - V^{\varepsilon}(0) |\psi_{\varepsilon}(0)|^2 \  \right| \\
    & \quad + C \int _0^t \beta(s) E_\lambda^\epsilon(s) \ ds + E_\lambda^\epsilon(0).
\end{split}
\end{equation*}

Notice that by \eqref{eq:regularizedV}
\begin{equation*}
\begin{split}
    \int_{\RR^3} |W_{\operatorname{sing}}^{\varepsilon}(x)| |\psi_{\varepsilon}(x,t)|^2 \ dx  &  =   \int_{\RR^3} \int_{\RR^3} | \psi_{\varepsilon}(x,t)^2 W_{\operatorname{sing}}(x - \epsilon y) \chi(y) | \ dy \ dx  \\
    & =   \int_{\RR^3} \chi(y) \int_{\RR^3} | \psi_{\varepsilon}(x + \epsilon y,t)^2 W_{\operatorname{sing}}(x)  | \  dx \ dy \\
    & =  \int_{\RR^3} \chi(y) \left \langle |W_{\operatorname{sing}}(\bullet)| \psi_{\varepsilon}(\bullet + \epsilon y,t), \psi_{\varepsilon}(\bullet + \epsilon y,t)  \right \rangle_{L^2}  \ dy.
\end{split}
\end{equation*}

Therefore, by the relative zero-boundedness of the singular potential with respect to the Laplacian, \cite[Remark 3.8]{BH}, it follows that for all $\delta>0$ , there exists $C_\delta$ such that for all $t \in [0,T]$ :
\begin{equation*}
\begin{split}
    \int_{\RR^3} |W_{\operatorname{sing}}^{\varepsilon}(x)| |\psi_{\varepsilon}(x,t)|^2 \ dx & \leq    \int_{\RR^3} \left( \delta \lVert \nabla \psi_{\varepsilon}(\bullet + \epsilon y,t) \rVert_{2}^2 + C_\delta \lVert \psi_{\varepsilon}(\bullet + \epsilon y,t) \rVert_{2}^2 \right) \chi(y) \ dy  \\
    & =    \delta \lVert \nabla \psi_{\varepsilon}(\bullet,t) \rVert_{2}^2 + C_\delta \lVert \psi_{\varepsilon}(\bullet,t) \rVert_{2}^2.
\end{split}
\end{equation*}

Notice also that straight from the assumptions on the magnetic vector potential and potentials
\begin{equation*}
\begin{split}
\int_{\RR^3} |W_{\operatorname{reg}}^{\varepsilon}(x)| |\psi_{\varepsilon}(x,t)|^2 \ dx  \leq \rho \lVert \psi_\epsilon(t)  \rVert_{H_1}^2 
    \text{ and } \quad \int_{\RR^3} |A(x) |\psi_{\varepsilon}(x,t)|^2 \ dx  \leq \lVert A \rVert^2_\infty \lVert \psi_\epsilon(t)  \rVert_{H_1}^2.
\end{split}
\end{equation*}

Then it follows that for all $\kappa>0$ 
\begin{equation*}
    \begin{split}
    \left\vert \int _{\RR^3} \bar \psi_{\varepsilon}(x,t) (A(x)D_{\epsilon} +D_{-\epsilon}A(x))\psi_\epsilon(x,t)) \  \right\rvert & \lesssim \frac{\Vert \psi_{\varepsilon}(t) \Vert^2_{2}}{\kappa}  +  \kappa \mathcal{A} \Vert \psi_{\varepsilon}(t) \Vert^2_{H^1}
    \end{split}
\end{equation*}
since both $\Vert D_{\epsilon}f \Vert_{\infty} \le \Vert \nabla f \Vert_{\infty}$ and $\Vert D_{\epsilon} f \Vert_{2} \le \Vert f\Vert_{H^1}$ for suitable $f.$

 Putting all these together, picking $\delta, \kappa$  sufficiently small, and evaluating $E_\lambda^\epsilon(0)$, we get for all $\lambda$,  
\[E_\lambda^\epsilon(t) \leq \frac{1}{2} \lVert \psi_\epsilon(t)  \rVert_{H^1}^2 + C_{\alpha,\mathcal{A},\rho,T} \lVert \psi_\epsilon  \rVert_{H_1}^2 +  C_{\alpha,\mathcal{A},\rho,\lambda,T} \lVert \varphi_0  \rVert_{H^1_1}^2 +  C \int _0^t \beta(s) E_\lambda^\epsilon(s) \ ds.\]
Pick $\lambda > 2C_{\alpha,\mathcal A,\rho,T}$, and we get 
\[ \lVert \psi_\epsilon(t)  \rVert_{H^1_1}^2 \lesssim \lVert \varphi_0  \rVert_{H^1_1}^2 +  \int _0^t \beta(s) \lVert \psi_\epsilon(t)  \rVert_{H^1_1}^2 \ ds.\]
Then, by Gronwall's inequality, we get  $\lVert \psi_\epsilon(t)  \rVert_{H^1_1} \lesssim  \lVert \varphi_0  \rVert_{H^1_1}.$
\end{proof}

As our first theorem shows, the solution even exists in $H^2_2$.

\begin{theo}\label{theo:H2ExistBoundedMag}
With the same notation and same assumptions as in Lemma \ref{lemm:H11}, but now for any initial state $\varphi_0 \in H^2_2$, the equation 
\begin{equation*}\begin{split}
    i \partial _t \psi(x,t) &= ( -i\nabla -A(x))^2 \psi(x,t) + V(x,t) \psi(x,t)  \\
    \psi(x,0) &= \varphi_0(x)
\end{split}\end{equation*}
has a unique solution $\psi \in L^\infty(0,T;H^2_2)$, and this $\psi$ satisfies $\left \lVert \psi \right \rVert _{L^\infty(0,T;H^2_2)} \leq C_{T,\alpha,\rho} \lVert \varphi_0 \rVert _{H^2_2}.$
\end{theo}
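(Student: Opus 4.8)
The plan is to re-use the mollification of Lemma~\ref{lemm:H11} and to upgrade the uniform $H^1_1$ bound obtained there to a uniform $H^2_2$ bound, and then pass to the limit. Fix $\epsilon\in(0,1]$ and let $\psi_\epsilon\in C([0,T];H^2_2)$ solve
\[
  i\partial_t\psi_\epsilon = H_B^\epsilon\psi_\epsilon + V^\epsilon\psi_\epsilon,\qquad \psi_\epsilon(\bullet,0)=\varphi_0,
\]
with $H_B^\epsilon=-\Delta-AD_\epsilon-D_{-\epsilon}A+A\cdot A$ and the mollified potentials $V^\epsilon$ from \eqref{eq:regularizedV}, whose existence is supplied by Lemma~\ref{lemm:existenxe}. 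The proof of Lemma~\ref{lemm:H11} already gives $\lVert\psi_\epsilon\rVert_{L^\infty(0,T;H^1_1)}\lesssim\lVert\varphi_0\rVert_{H^1_1}\le\lVert\varphi_0\rVert_{H^2_2}$ uniformly in $\epsilon$, together with $\frac{d}{dt}\lVert\psi_\epsilon\rVert_2^2=0$ and a uniform-in-$\epsilon$ bound on $\frac{d}{dt}\lVert |x|\psi_\epsilon\rVert_2^2$. Since $\lVert\phi\rVert_{H^2_2}^2$ is comparable to $\lVert\phi\rVert_2^2+\lVert\Delta\phi\rVert_2^2+\lVert |x|^2\phi\rVert_2^2$, the task reduces to bounding $\lVert\Delta\psi_\epsilon(t)\rVert_2$ and $\lVert |x|^2\psi_\epsilon(t)\rVert_2$ uniformly in $\epsilon$.

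For the $H^2$ part I would differentiate the equation in time. As $H_B^\epsilon+V^\epsilon(t)$ is self-adjoint (the discrete magnetic piece $AD_\epsilon+D_{-\epsilon}A$ is symmetric and the potentials are real), pairing $\partial_t^2\psi_\epsilon$ with $\partial_t\psi_\epsilon$ cancels the leading term and leaves
\[
  \frac{d}{dt}\lVert\partial_t\psi_\epsilon\rVert_2^2 = 2\Im\langle\partial_t\psi_\epsilon,(\partial_tV^\epsilon)\psi_\epsilon\rangle \le 2|u'(t)|\,\lVert\partial_t\psi_\epsilon\rVert_2\,\lVert V_{\operatorname{con}}^\epsilon\psi_\epsilon\rVert_2,
\]
and $\lVert V_{\operatorname{con}}^\epsilon\psi_\epsilon\rVert_2\lesssim\rho(\lVert\psi_\epsilon\rVert_2+\lVert |x|^2\psi_\epsilon\rVert_2)$ by Assumption~\ref{ass:ass1}. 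Conversely, from the equation $-\Delta\psi_\epsilon=i\partial_t\psi_\epsilon+AD_\epsilon\psi_\epsilon+D_{-\epsilon}(A\psi_\epsilon)-|A|^2\psi_\epsilon-V^\epsilon\psi_\epsilon$; bounding the magnetic terms by $\lesssim\mathcal A\lVert\psi_\epsilon\rVert_{H^1}$, the regular and control parts of $V^\epsilon$ by $\lesssim\rho(\lVert\psi_\epsilon\rVert_2+\lVert |x|^2\psi_\epsilon\rVert_2)$, and the singular part through the uniform relative bound $\lVert W_{\operatorname{sing}}^\epsilon\phi\rVert_2\le\delta\lVert\Delta\phi\rVert_2+C_\delta\lVert\phi\rVert_2$ (valid since $W_{\operatorname{sing}}\in L^2(\RR^3)$, exactly as in the proof of Lemma~\ref{lemm:H11}), and absorbing $\delta\lVert\Delta\psi_\epsilon\rVert_2$ to the left, one obtains $\lVert\Delta\psi_\epsilon\rVert_2\lesssim\lVert\partial_t\psi_\epsilon\rVert_2+\lVert |x|^2\psi_\epsilon\rVert_2+\lVert\psi_\epsilon\rVert_{H^1_1}$.

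For the weighted part I would run the computation of Lemma~\ref{lemm:H11} with the weight $(1+|x|^2)$ replaced by $(1+|x|^2)^2$: multiply by $(1+|x|^2)^2\overline{\psi_\epsilon}$, integrate and take imaginary parts, so the real potential drops out; integration by parts in the Laplacian term produces a cross term $\le\delta\lVert(1+|x|^2)\psi_\epsilon\rVert_2^2+C_\delta\lVert |x|\nabla\psi_\epsilon\rVert_2^2$, and the discrete magnetic terms are handled by the same self-adjointness-and-commutator device as there, the commutator $[(1+|x|^2)^2,D_{-\epsilon}]$ producing a multiplication operator of polynomial growth $\lesssim 1+|x|^3$ which, after Cauchy--Schwarz and a shift of variables, is dominated by $\mathcal A\lVert(1+|x|^2)\psi_\epsilon\rVert_2^2$. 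The only genuinely new quantity, $\lVert |x|\nabla\psi_\epsilon\rVert_2$, is controlled by one integration by parts,
\[
  \lVert |x|\nabla\psi_\epsilon\rVert_2^2 \le 2\lVert\psi_\epsilon\rVert_2\lVert |x|\nabla\psi_\epsilon\rVert_2 + \lVert |x|^2\psi_\epsilon\rVert_2\lVert\Delta\psi_\epsilon\rVert_2 \lesssim \lVert\psi_\epsilon\rVert_2^2 + \lVert |x|^2\psi_\epsilon\rVert_2^2 + \lVert\Delta\psi_\epsilon\rVert_2^2 .
\]
Now set $\mathcal{E}_\epsilon(t):=\lVert\partial_t\psi_\epsilon(t)\rVert_2^2+\lVert |x|^2\psi_\epsilon(t)\rVert_2^2$; substituting the bound for $\lVert\Delta\psi_\epsilon\rVert_2$ from the previous paragraph and the one for $\lVert |x|\nabla\psi_\epsilon\rVert_2$ just stated — both of which re-express these quantities through $\mathcal{E}_\epsilon$ and the already controlled $\lVert\psi_\epsilon\rVert_{H^1_1}$ — and using $\frac{d}{dt}\lVert\psi_\epsilon\rVert_2^2=0$ and the bound on $\frac{d}{dt}\lVert |x|\psi_\epsilon\rVert_2^2$, one closes a differential inequality
\[
  \frac{d}{dt}\mathcal{E}_\epsilon(t) \le C(1+\mathcal A)(1+|u'(t)|)\left(\mathcal{E}_\epsilon(t)+\lVert\varphi_0\rVert_{H^2_2}^2\right).
\]
Since $\mathcal{E}_\epsilon(0)=\lVert(H_B^\epsilon+V^\epsilon(0))\varphi_0\rVert_2^2+\lVert |x|^2\varphi_0\rVert_2^2\lesssim\lVert\varphi_0\rVert_{H^2_2}^2$ uniformly in $\epsilon$ and $1+|u'|\in L^1(0,T)$, Gronwall's inequality (applied on the finitely many intervals on which $u\in W^{1,1}$, the jumps of $u$ contributing only a bounded multiplicative factor through the jump of $\partial_t\psi_\epsilon$) yields $\mathcal{E}_\epsilon(t)\lesssim\lVert\varphi_0\rVert_{H^2_2}^2$, hence $\lVert\psi_\epsilon(t)\rVert_{H^2_2}\lesssim\lVert\varphi_0\rVert_{H^2_2}$, uniformly in $\epsilon$.

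Finally, this uniform bound yields, by Banach--Alaoglu, a subsequence converging weak-$*$ in $L^\infty(0,T;H^2_2)$ to some $\psi$; passing to the limit in the mild formulation as in the proof of Lemma~\ref{lemm:H11} — using $D_{\pm\epsilon}\to-i\nabla$, $V^\epsilon\to V$ in the sense needed, and $W_{\operatorname{sing}}^\epsilon\to W_{\operatorname{sing}}$ in $L^2(\RR^3)$ — shows $\psi$ solves the stated equation, while weak-$*$ lower semicontinuity of the norm gives $\lVert\psi\rVert_{L^\infty(0,T;H^2_2)}\lesssim\lVert\varphi_0\rVert_{H^2_2}$; uniqueness is immediate, since any two $H^2_2$ solutions are in particular $H^1_1$ solutions and hence coincide by Lemma~\ref{lemm:H11}. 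I expect the main obstacle to be the next-to-last step: the quadratic growth of $V$ ties $\lVert\Delta\psi_\epsilon\rVert_2$ and $\lVert |x|^2\psi_\epsilon\rVert_2$ to each other — bounding $\lVert\Delta\psi_\epsilon\rVert_2$ via the time derivative forces in $\lVert V^\epsilon\psi_\epsilon\rVert_2\sim\lVert |x|^2\psi_\epsilon\rVert_2$, while the $|x|^4$-weighted identity brings in $\lVert |x|\nabla\psi_\epsilon\rVert_2$, which in turn needs $\lVert\Delta\psi_\epsilon\rVert_2$ — so the energy $\mathcal{E}_\epsilon$ must be designed precisely so that this loop closes; the discretized magnetic commutators and the $W^{1,1}_{\operatorname{pcw}}$-jumps of $u$ are only technical points, handled as in Lemma~\ref{lemm:H11}.
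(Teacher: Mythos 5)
Your proposal is correct and follows essentially the same route as the paper: regularize via the discretized magnetic operator and mollified potentials as in Lemma \ref{lemm:H11}, prove the elliptic-type estimate $\lVert\Delta\psi_\epsilon\rVert_2\lesssim\lVert\partial_t\psi_\epsilon\rVert_2+\lVert|x|^2\psi_\epsilon\rVert_2+\dots$ (the paper's Lemma \ref{H^2 bound}), run the $|x|^4$-weighted identity with the self-adjointness/commutator device for the discrete magnetic terms, bound $\lVert|x|\nabla\psi_\epsilon\rVert_2$ by integration by parts, control $\lVert\partial_t\psi_\epsilon\rVert_2$ by differentiating the equation in time, and close a Gronwall loop for the energy $\lVert|x|^2\psi_\epsilon\rVert_2^2+\lVert\partial_t\psi_\epsilon\rVert_2^2$ uniformly in $\epsilon$. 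The only cosmetic difference is that you substitute the elliptic estimate into the $\lVert|x|\nabla\psi_\epsilon\rVert_2$ bound via Cauchy--Schwarz where the paper inserts the equation directly into the integral, and you spell out the $\epsilon\to0$ limit passage, which the paper leaves implicit.
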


\begin{proof}
We start by deriving a bound on $\lVert \psi_\epsilon \rVert_{H^2}$ which we will do in an extra Lemma:
\begin{lemm} \label{H^2 bound}
There exists $C$ independent of $\epsilon$ and $t$ such that 
\[ \lVert \psi_\epsilon \rVert_{H^2} \leq  C \lVert \psi_\epsilon \rVert_{H_2} + C \lVert \partial_t \psi_\epsilon \rVert_{L_2} \]
\end{lemm}
Using some of the previously established results we see that
\begin{proof}
Directly from the Schrödinger equation, we obtain
 \begin{equation*}
 \begin{split}
     \lVert \psi_\epsilon \rVert_{H^2} & \leq  \lVert \partial_t \psi_\epsilon \rVert_{2} + 2 \mathcal{A} \lVert D_{\epsilon} \psi_\epsilon \rVert_{2} + (\mathcal{A+A}^2) \lVert \psi_\epsilon \rVert_{2} + \rho( |u(t)| + 1) \lVert \psi_\epsilon \rVert_{H_1} + \lVert W_{\operatorname{sing}} \psi_\epsilon \rVert_{2} \\
     & \leq  \lVert \partial_t \psi_\epsilon \rVert_{2} + ( 2\mathcal{A} +\delta) \lVert \nabla \psi_\epsilon \rVert_{2} + (\mathcal{A+A}^2 + \rho|u(t)| + \rho + C_\delta) \lVert \psi_\epsilon \rVert_{H_1}.
      \end{split}
 \end{equation*}
However, from $\lVert \nabla \psi_\epsilon \rVert_{2}  \leq \sqrt { \lVert \psi_\epsilon \rVert_{2}\lVert \psi_\epsilon \rVert_{H^2}} \leq \frac{1}{\kappa} \lVert \psi_\epsilon \rVert_{2} + \kappa \lVert \psi_\epsilon \rVert_{H^2}$, we see that by choosing $\delta, \kappa$ to be sufficiently small, there exists $C$ independent of $\epsilon$ and $t$ such that $\lVert \psi_\epsilon \rVert_{H^2} \leq  C \lVert \psi_\epsilon \rVert_{H_2} + C \lVert \partial_t \psi_\epsilon \rVert_{L_2}.$

\end{proof}

We now continue with the energy estimates:
Starting from $i \partial_t \psi_\epsilon = H_B^\epsilon \psi_\epsilon + V^\epsilon \psi_\epsilon$ , multiply by $|x|^4 \overline{\psi_\epsilon}$, integrate over $\RR^3$ and take the imaginary part

\begin{equation*}
\begin{split}
    &\frac{1}{2} \frac{d}{dt} \int_{\RR^3} |x|^4 |\psi_\epsilon|^2 \ dx 
     = \Im \int_{\RR^3} (\nabla \psi_\epsilon) \cdot \nabla (|x|^4\psi_\epsilon) - |x|^4 \overline{\psi_\epsilon} (AD_{\epsilon} + D_{-\epsilon}A)\psi_\epsilon \ dx \\
    & \leq 2 \int_{\RR^3} |x|^2  |\nabla \psi_\epsilon|^2 \ dx + 2 \int_{\RR^3} |x|^4  |\psi_\epsilon|^2 \ dx - \Im \int_{\RR^3} |x|^4 \overline{\psi_\epsilon} (AD_{\epsilon} + D_{-\epsilon}A)\psi_\epsilon \ dx.
\end{split}
\end{equation*}
Note that
\begin{equation*}
\begin{split}
    \Im \int_{\RR^3} |x|^4 \overline{\psi_\epsilon} (AD_{\epsilon} + D_{-\epsilon}A)\psi_\epsilon \ dx
    &= \Im \int_{\RR^3} \overline{\psi_\epsilon} (|x|^4 AD_{\epsilon} + D_{-\epsilon}|x|^4 A)\psi_\epsilon \ dx \\
    & \quad + \Im \int_{\RR^3} \overline{\psi_\epsilon} (|x|^4 D_{-\epsilon}A - D_{-\epsilon}|x|^4 A)\psi_\epsilon \ dx.
\end{split}
\end{equation*}
The first term of the RHS is in $\RR$ by self-adjointness. Therefore

\begin{equation*}
\begin{split}
    &\Im \int_{\RR^3} |x|^4 \overline{\psi_\epsilon} (AD_{\epsilon} + D_{-\epsilon}A)\psi_\epsilon
    = \Im \int_{\RR^3} \overline{\psi_\epsilon} (|x|^4 D_{-\epsilon}A - D_{-\epsilon}|x|^4 A)\psi_\epsilon \ dx \\
    & \quad - \sum_j (|x - \epsilon e_j|^4 A_j(x-\epsilon e_j) \psi_\epsilon(x-\epsilon e_j) - |x|^4 A_j(x)\psi_\epsilon(x) )  \Bigg) \ dx \\
     &= \Im \int_{\RR^3} \overline{\psi_\epsilon} \frac{i}{\epsilon} \sum_j \left( (|x - \epsilon e_j|^4 - |x|^4) A_j(x-\epsilon e_j) \psi_\epsilon(x-\epsilon e_j) \right) \ dx \lesssim \mathcal{A} \int_{\RR^3} |x|^4 |\psi_\epsilon |^2 \ dx
     \end{split}
\end{equation*}
for some constant $C>0.$ Therefore, we have shown that 
\begin{equation*}
\begin{split}
    \frac{d}{dt} \int_{\RR^3} |x|^4 |\psi_\epsilon|^2 \ dx 
    & \leq 4 \int_{\RR^3} |x|^2  |\nabla \psi_\epsilon|^2 \ dx + (2C \mathcal{A}+4) \int_{\RR^3} |x|^4  |\psi_\epsilon|^2 \ dx.
\end{split}
\end{equation*}

Now notice that
\begin{equation*}
\begin{split}
     \int_{\RR^3} |x|^2  |\nabla \psi_\epsilon|^2 \ dx
     & = - \int_{\RR^3} \overline{\psi_\epsilon} \left( |x|^2  \Delta \psi_\epsilon + 2x \cdot \nabla \psi_\epsilon \right) \ dx\\
     & \leq \int_{\RR^3} |x|^2|\psi_\epsilon|^2 + |\nabla \psi_\epsilon|^2  +  |x|^4|\psi_\epsilon|^2 + |\partial_t \psi_\epsilon|^2  + |x|^2|\psi_\epsilon|^2 A\cdot A \ dx \\
     & \quad + \rho(|u(t)|+1)\int_{\RR^3} (|x|^2+|x|^4)|\psi_\epsilon|^2  \ dx + \int_{\RR^3}|x|^4|\psi_\epsilon|^2 + |\psi_\epsilon|^2|W_{\operatorname{sing}}| \ dx  \\
     & \quad + \lVert |\bullet|^2 \psi_\epsilon \rVert_{2} \lVert AD_{\epsilon} \psi_\epsilon \rVert_{2} + \lVert |\bullet|^2 \psi_\epsilon \rVert_{2} \lVert D_{-\epsilon}A \psi_\epsilon \rVert_{2} \\
     & \lesssim \lVert \psi_\epsilon \rVert_{H_2} ^2 +\lVert \psi_\epsilon \rVert_{H^1} ^2 + \lVert \partial_t \psi_\epsilon \rVert_{2} ^2.
\end{split}
\end{equation*}
By Lemma \ref{H^2 bound} and the conservation of $\lVert \psi_\epsilon \rVert_{2},$ we conclude that independent of $t$ and $\epsilon$
\begin{equation*}
    \frac{d}{dt} \int_{\RR^3} (1+|x|^4) |\psi_\epsilon|^2 \ dx  \lesssim  \lVert \psi_\epsilon \rVert_{H_2} ^2 +  \lVert \partial_t \psi_\epsilon \rVert_{2}^2.
\end{equation*}

Let $E_\epsilon(t) = \lVert \psi_\epsilon \rVert_{H_2} ^2 +  \lVert \partial_t \psi_\epsilon \rVert_{2} ^2 $.
By integrating the previous line from 0 to $t$, we see that
\begin{equation*}
    \int_{\RR^3} (1+|x|^4) |\psi_\epsilon(t)|^2 \ dx  \lesssim \int_0^t E_\epsilon(t) dt + \int_{\RR^3} (1+|x|^4) |\varphi_0|^2 \ dx.
\end{equation*}
Therefore, $\lVert \psi_\epsilon(t) \rVert_{H_2}^2 \lesssim  \int_0^t E_\epsilon(t) dt +  \lVert \varphi_0 \rVert_{H_2}^2.$ We now aim to find a bound on $\lVert \partial_t \psi_\epsilon \rVert_{2} ^2$ .

Let $\theta = \partial_t \psi_\epsilon$ then from $i \partial_t \theta  = H_B^\epsilon \theta + (W_{\operatorname{reg}}^\epsilon + W_{\operatorname{sing}}^\epsilon ) \theta + u'(t) V_{\operatorname{con}}^\epsilon \psi_\epsilon + u(t) V_{\operatorname{con}}^\epsilon \theta,$
we find that 
\[ \frac{d}{dt}   \lVert \theta \rVert_{2}^2     \leq |u'(t)| \rho \lVert \psi_\epsilon \rVert_{H_2}^2 +  |u'(t)| \lVert \theta \rVert_{2}^2.\]
Therefore there exists $C_\rho$ depending only on $\rho$ such that 
\begin{equation*}
\begin{split}
    \lVert \theta(t) \rVert_{2}^2  & \leq \lVert \theta(0) \rVert_{2}^2 + C_\rho \int_0^t |u'(s)|  E_\epsilon(s) ds
 \end{split}
\end{equation*}
and  $ \lVert \theta(0) \rVert_{2}  = \lVert H_B^\epsilon \varphi_0 + V^\epsilon \varphi_0 \rVert_{2} \lesssim \lVert \varphi_0 \rVert_{H^2_2}.$

By combining these estimates, we get that there exists $C$ independent of $t$ and $\epsilon$ such that
\[E_\epsilon(t) \lesssim \lVert \varphi_0 \rVert_{H^2_2}^2 + C \int_0^t (1+|u'(s)|) E_\epsilon(s) \ ds.\]
Therefore by Gronwall's inequality $E_\epsilon(t) \lesssim e^{\alpha + T} \lVert \varphi_0 \rVert_{H^2_2}^2.$
Using Lemma \ref{H^2 bound} we see we can bound $\lVert \psi_\epsilon(t) \rVert_{H^2_2}$ by a multiple of $\sqrt{E_\epsilon(t)}$ , therefore there exists $C$ independent of $t$ and $\epsilon$ such that $\lVert \psi_\epsilon(t) \rVert_{H^2_2} \lesssim  \lVert \varphi_0 \rVert_{H^2_2}.$
\end{proof}

We now treat the case of a homogeneous magnetic field:

\begin{theo}
\label{theo:magfield}
With the same notation and same assumptions as in Lemma \ref{lemm:H11} but for homogeneous magnetic fields with vector potential
$A(x) = \frac{B_0}{2}(-x_2,x_1,0),$ there exists a constant $C_{T,\alpha,\rho}$ such that for any $\varphi_0 \in H^2_2$, the equation
\begin{equation*}\begin{split}
    i \partial _t \psi(x,t) &= ( -i\nabla -A)^2 \psi(x,t) + V \psi(x,t)  \text{ with }\psi(\bullet,0) = \varphi_0
\end{split}\end{equation*}
has a unique solution $\psi \in L^\infty(0,T;H^2_2)$ satisfying $
    \left \lVert \psi \right \rVert _{L^\infty(0,T;H^2_2)} \leq C_{T,\alpha,\rho} \lVert \varphi_0 \rVert _{H^2_2}.$
\end{theo}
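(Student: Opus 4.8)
The plan is to follow the scheme of Lemma~\ref{lemm:H11} and Theorem~\ref{theo:H2ExistBoundedMag}; the only genuinely new feature is that the homogeneous-field vector potential $A(x)=\frac{B_0}{2}(-x_2,x_1,0)$ is unbounded, so the estimates that previously used $\lVert A\rVert_\infty$ must be replaced by ones exploiting its special structure. The relevant facts are: $A$ is divergence free, $\nabla\cdot A=0$, so that $(-i\nabla-A)^2=-\Delta+2iA\cdot\nabla+|A|^2$; $A$ is everywhere orthogonal to the radial direction, $x\cdot A(x)\equiv0$; the Jacobian $\nabla A$ is a constant, hence bounded, matrix; $A\cdot\nabla=\frac{B_0}{2}(x_1\partial_2-x_2\partial_1)$ is an angular derivative commuting with multiplication by every radial function, in particular with $|x|^2$; and $|A(x)|^2=\frac{B_0^2}{4}(x_1^2+x_2^2)\le\frac{B_0^2}{4}|x|^2$ is dominated by the quadratic weight. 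Together these make the magnetic term essentially invisible to the weighted $L^2$ energy estimates.

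For existence, fix $\eta\in C^\infty_c(\RR^3)$ with $\eta\equiv1$ on $B_1(0)$ and $\supp\eta\subset B_2(0)$, and truncate $A$ to $A_n(x):=\eta(x/n)A(x)\in C^\infty_c(\RR^3;\RR^3)\subset W^{2,\infty}$. For each fixed $n$, Theorem~\ref{theo:H2ExistBoundedMag} applied with vector potential $A_n$ supplies a unique $\psi_n\in L^\infty(0,T;H^2_2)$ solving the corresponding equation; we use that theorem only for existence at fixed $n$, not for a bound, since $\lVert A_n\rVert_\infty\to\infty$. The truncation respects the structure above with constants uniform in $n$: one still has $x\cdot A_n\equiv0$ and $|A_n|\le|A|$, the Jacobian $\nabla A_n$ is bounded uniformly in $n$, and $\nabla\cdot A_n=\tfrac1n(\nabla\eta)(x/n)\cdot A$ is supported in the annulus $\{n\le|x|\le 2n\}$ with $\lVert\nabla\cdot A_n\rVert_\infty$ bounded uniformly in $n$ (indeed $\to0$).

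The heart of the proof is to re-run the energy estimates of Lemma~\ref{lemm:H11} and Theorem~\ref{theo:H2ExistBoundedMag} on the regularized, discretized approximants $\psi_{n,\epsilon}$, keeping track of the $n$-dependence. For a weight $w$ equal to $1+|x|^2$ or $|x|^4$, the magnetic contribution to $\tfrac{d}{dt}\int w\,|\psi_{n,\epsilon}|^2$ is $2\int w\,A_n\cdot\nabla|\psi_{n,\epsilon}|^2=-2\int(\nabla w\cdot A_n+w\,\nabla\cdot A_n)|\psi_{n,\epsilon}|^2$: the first summand vanishes because $\nabla w$ is radial and $x\cdot A_n\equiv0$, and the second is bounded by $C\int w\,|\psi_{n,\epsilon}|^2$ with $C$ independent of $n$; the $|A_n|^2$ term contributes nothing after taking the imaginary part. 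The discrete-derivative commutator errors (as in the $g(x)$ computation in Lemma~\ref{lemm:H11}) are handled identically, now using $|A_n(x)|\lesssim|x|$ so that they are absorbed into $\lVert\psi_{n,\epsilon}\rVert_{H_1}$ and $\lVert\psi_{n,\epsilon}\rVert_{H_2}$. For the plain $H^2$ bound — the analogue of Lemma~\ref{H^2 bound} — the equation gives $\lVert\Delta\psi_{n,\epsilon}\rVert_2\lesssim\lVert\partial_t\psi_{n,\epsilon}\rVert_2+\lVert\,|x|\,|\nabla\psi_{n,\epsilon}|\,\rVert_2+\lVert\,|x|^2\psi_{n,\epsilon}\rVert_2+\lVert V^\epsilon\psi_{n,\epsilon}\rVert_2$, after which $\int|x|^2|\nabla\psi_{n,\epsilon}|^2$ is controlled by integration by parts against $\lVert\psi_{n,\epsilon}\rVert_{H_2}^2+\lVert\psi_{n,\epsilon}\rVert_{H^1}^2+\lVert\partial_t\psi_{n,\epsilon}\rVert_2^2$ and the singular potential by its relative zero-boundedness, \cite[Remark 3.8]{BH}, exactly as before and with no factor $\lVert A_n\rVert_\infty$. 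The $\partial_t$ estimate is magnetic-term free: since the regularized operator built from $A_n$ together with $V^\epsilon$ is self-adjoint up to discretization, $\tfrac{d}{dt}\lVert\partial_t\psi_{n,\epsilon}\rVert_2^2\lesssim|u'(t)|(\lVert\partial_t\psi_{n,\epsilon}\rVert_2^2+\lVert\psi_{n,\epsilon}\rVert_{H_2}^2)$. Assembling the energy $\mathcal E_{n,\epsilon}(t)=\lVert\psi_{n,\epsilon}(t)\rVert_{H^2_2}^2+\lVert\partial_t\psi_{n,\epsilon}(t)\rVert_2^2$ and applying Gronwall with $\beta(t)=1+|u'(t)|\in L^1(0,T)$ gives $\lVert\psi_{n,\epsilon}(t)\rVert_{H^2_2}\le C_{T,\alpha,\rho}\lVert\varphi_0\rVert_{H^2_2}$, uniformly in $n$, $\epsilon$ and $t$.

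Finally I would pass to the limit: letting $\epsilon\to0$ as in Lemma~\ref{lemm:H11} yields $\psi_n$ with the same uniform bound, and then, $\{\psi_n\}$ being bounded in $L^\infty(0,T;H^2_2)$ with $\{\partial_t\psi_n\}$ bounded in $L^\infty(0,T;L^2)$, a subsequence converges weakly-$*$ to some $\psi$; since $A_n\to A$ pointwise with $|A_n|\le|A|\lesssim\langle\bullet\rangle$, which pairs with the decay carried by $H^2_2$, the limit solves the homogeneous-field equation, and weak-$*$ lower semicontinuity preserves the bound $\lVert\psi\rVert_{L^\infty(0,T;H^2_2)}\le C_{T,\alpha,\rho}\lVert\varphi_0\rVert_{H^2_2}$. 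Uniqueness follows by running the continuous (non-discretized) weighted energy estimate directly on the difference of two solutions in $L^\infty(0,T;H^2_2)$, which has zero data. The main obstacle is the third step — confirming that in each energy identity the unbounded magnetic term either cancels by $x\cdot A=0$ and $\nabla\cdot A=0$ or is absorbed into the weighted norms already tracked, with constants uniform in the truncation and regularization; a secondary point is checking that the double limit $\epsilon\to0$, $n\to\infty$ produces a mild solution of the original equation rather than of a regularized surrogate.
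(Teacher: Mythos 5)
Your argument is correct and follows the same overall strategy as the paper -- approximate the linearly growing vector potential by bounded potentials, invoke Theorem \ref{theo:H2ExistBoundedMag} only for existence of the approximants, prove $H^2_2$ and $\lVert\partial_t\psi\rVert_{2}$ bounds uniform in the approximation, close with Gronwall, and pass to the limit -- but the regularization, and hence the mechanism that tames the magnetic term, is different. The paper replaces $A$ by $A_\epsilon(x)=\frac{B_0}{2\epsilon}(-\tanh(\epsilon x_2),\tanh(\epsilon x_1),0)$, which stays divergence free but loses exact tangentiality; all its estimates then rest on $|A_\epsilon(x)|\le\frac{B_0}{2}|x|$ and the uniformly bounded Jacobian, so the weighted terms like $\Im\int|x|^4\bar\psi_\epsilon(A_\epsilon D_\epsilon+D_{-\epsilon}A_\epsilon)\psi_\epsilon$ are simply absorbed into $\lVert\psi_\epsilon\rVert_{H_2}^2$. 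Your cutoff $A_n=\eta(\cdot/n)A$ instead keeps $x\cdot A_n\equiv0$, so the leading weighted contribution cancels identically, at the price of a nonzero divergence; note that $\lVert\nabla\cdot A_n\rVert_\infty$ is uniformly bounded ($\le B_0\lVert\nabla\eta\rVert_\infty$, since $|A|\sim n$ on the annulus where $\nabla\eta(\cdot/n)$ lives) but does \emph{not} tend to zero as you claim parenthetically -- harmless, since only uniform boundedness enters. Two further small points: in your analogue of Lemma \ref{H^2 bound}, bounding $\int|x|^2|\nabla\psi_{n,\epsilon}|^2$ by integration by parts reintroduces $\Delta\psi_{n,\epsilon}$, so as in the paper you must either substitute the equation or use the $\kappa$-absorption trick (small multiples of $\lVert\psi\rVert_{H^2}$ and $\lVert\partial_t\psi\rVert_2$ absorbed into the left side) to avoid circularity; and your explicit treatment of the double limit $\epsilon\to0$, $n\to\infty$ and of uniqueness is actually more detailed than the paper, which stops at the uniform bound $\lVert\psi_\epsilon\rVert_{H^2_2}\lesssim\lVert\varphi_0\rVert_{H^2_2}$. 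Neither point is a gap; what your route buys is exact cancellation in the weighted estimates, what the paper's buys is exact divergence-freeness and no commutator with the cutoff -- both yield the same $n$- (respectively $\epsilon$-) uniform Gronwall estimate.
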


\begin{proof}
 
 Let $A_\epsilon(x):= \frac{B_0}{2\epsilon}(-\tanh(\epsilon x_2),\tanh(\epsilon x_1),0).$ We know from Theorem \ref{theo:H2ExistBoundedMag} that for all $\varphi_0 $ there exists $\psi_\epsilon \in H^2_2$ solving
 \begin{equation*}\begin{split}
    i \partial _t \psi_\epsilon(x,t) &= ( -i\nabla -A_\epsilon)^2 \psi_\epsilon(x,t) + V \psi_\epsilon(x,t)  \text{ with } \psi_\epsilon(x,0) = \varphi_0(x).
\end{split}\end{equation*}

Lemma \ref{H^2 bound} still holds for $\psi_\epsilon$ since
 \begin{equation}
 \begin{split}
 \label{eq:identity123}
   \lVert \Delta \psi_\epsilon \rVert_{2}&  \leq  \lVert \partial_t \psi_\epsilon \rVert_{2} + 2\lVert A_\epsilon \nabla \psi_\epsilon \rVert_{2} + \lVert A_\epsilon \cdot A_\epsilon \psi_\epsilon \rVert_{2} + \lVert V \psi_\epsilon \rVert_{2} 
 \end{split}
 \end{equation}
 and integration by parts implies for $\kappa>0$ arbitrary
\begin{equation*}
\begin{split}
\lVert A_\epsilon \nabla \psi_\epsilon \rVert_{2}^2  &\leq \int_{\RR^3}  \Big(|\nabla ( A_\epsilon \cdot A_\epsilon)| |\psi_\epsilon| |\nabla \psi_\epsilon| +  A_\epsilon \cdot A_\epsilon |\psi_\epsilon| |\Delta \psi_\epsilon|\Big)(x) dx \\
      &\leq \frac{B_0^4}{4\kappa} \lVert \psi_\epsilon \rVert_{H_1}^2 + \kappa  \lVert \psi_\epsilon \rVert_{H^1}^2 + \frac{B_0^2}{4\kappa} \lVert \psi_\epsilon \rVert_{H_2}^2 + \kappa  \lVert \psi_\epsilon \rVert_{H^2}^2.
 \end{split}
 \end{equation*}
 
  By picking $\kappa$ to be sufficiently small, it is clear in \eqref{eq:identity123} that $ \lVert \psi_\epsilon \rVert_{H^2} \lesssim  C \lVert \psi_\epsilon \rVert_{H_2} + C \lVert \partial_t \psi_\epsilon \rVert_{L_2}.$

Therefore there exists $C$ independent of $\epsilon$ and $t$ such that $\lVert \psi_\epsilon \rVert_{H^2_2} \leq  C \sqrt{E_\epsilon(t)}$ 
where $E_\epsilon(t) =  \lVert \psi_\epsilon \rVert_{H_2}^2 + \lVert \partial_t \psi_\epsilon \rVert_{L_2}^2.$
 
To bound $\lVert \psi_\epsilon \rVert_{H_2}$, we start from $ i \partial _t \psi_\epsilon = ( -i\nabla -A_\epsilon)^2 \psi_\epsilon + V \psi_\epsilon$ and multiply by $|x|^4 \overline{\psi_\epsilon}$. Taking the imaginary part and integrating over $\RR^3$, we see that
\begin{equation*}
\begin{split}
    \Re \int_{\RR^3} |x|^4 (\overline{\psi_\epsilon} \partial_t \psi_\epsilon)(x) \ dx & = \Im \int_{\RR^3} -|x|^4 (\overline{\psi_\epsilon} \Delta \psi_\epsilon)(x) - 2i |x|^4 (\overline{\psi_\epsilon} A_\epsilon \cdot \nabla \psi_\epsilon)(x) \ dx.
 \end{split}
 \end{equation*}
Integration by parts yields
\begin{equation*}
\begin{split}
    \Im \int_{\RR^3} 2i |x|^4 (\overline{\psi_\epsilon} A_\epsilon \cdot \nabla \psi_\epsilon)(x) \ dx &= \Re \int_{\RR^3}  - 2 \psi_\epsilon(x) \nabla \cdot ( |x|^4 \overline{\psi_\epsilon}(x) A_\epsilon(x))  \ dx\\
    &= \Im \int_{\RR^3}  - 2i |\psi_\epsilon(x)|^2 A_\epsilon(x) \cdot \nabla ( |x|^4 ) -2i \overline{\psi_\epsilon(x)}  |x|^4 A_\epsilon(x) \cdot \nabla \psi_\epsilon(x) \ dx \end{split}
 \end{equation*}
 which shows that
\[  \Im \int_{\RR^3} 2i |x|^4 (\overline{\psi_\epsilon} A_\epsilon \cdot \nabla \psi_\epsilon)(x) \ dx =\Im \int_{\RR^3}  - i |\psi_\epsilon(x)|^2 A_\epsilon(x) \cdot \nabla ( |x|^4 ) \ dx.\]
Hence, this implies
\begin{equation*}
\begin{split}
    \frac{1}{2} \frac{d}{dt} \int_{\RR^3} |x|^4  |\psi_\epsilon(x)|^2 \ dx & = \Im \int_{\RR^3} \nabla (|x|^4 \overline{\psi_\epsilon(x)}) \cdot ( \nabla \psi_\epsilon(x)) +i |\psi_\epsilon(x)|^2 A_\epsilon(x) \cdot \nabla ( |x|^4 )  \ dx\\
    &\leq 4 \int_{\RR^3} |\psi_\epsilon(x)| |x|^3  | \nabla \psi_\epsilon(x)| + |\psi_\epsilon(x)|^2 |A_\epsilon(x)|  |x|^3 \ dx\\
       &\lesssim (\vert B_0 \vert +1) \lVert \psi_\epsilon \rVert_{H_2}^2 + \lVert \psi_\epsilon \rVert_{H^2}^2.
\end{split}
\end{equation*}
Therefore we obtain the estimate $\lVert \psi_\epsilon \rVert_{H_2}^2 \lesssim   \lVert \varphi_0 \rVert_{H_2}^2 +  \int_0^t E_\epsilon(s) \ ds.$

By exactly the same argument as last time we see that there exists $C_\rho$ depending only on $\rho$ such that 
\begin{equation*}
\begin{split}
    \lVert  \partial_t \psi_\epsilon(t) \rVert_{2}^2  & \leq \lVert  \partial_t \psi_\epsilon(0) \rVert_{2}^2 + C_\rho \int_0^t |u'(s)|  E_\epsilon(s) ds.
 \end{split}
\end{equation*}
By combining these estimates, we get that there exists $C$ independent of $t$ and $\epsilon$ such that
\begin{equation*}
    E_\epsilon(t) \lesssim \lVert \varphi_0 \rVert_{H^2_2}^2 + C \int_0^t (1+|u'(s)|) E_\epsilon(s) \ ds.
\end{equation*}
Therefore by Gronwall's inequality $E_\epsilon(t) \lesssim e^{\alpha + T} \lVert \varphi_0 \rVert_{H^2_2}^2$ and thus $\lVert \psi_\epsilon(t) \rVert_{H^2_2} \lesssim  \lVert \varphi_0 \rVert_{H^2_2}.$
\end{proof}

Our next Lemma will be used to obtain an explicit decay estimate on the full solution to the Schrödinger equation with magnetic field on a bounded domain:
\begin{lemm}
\label{lemm:decay}
There exists $C>0$ such that for all $R\ge r>2$ and $f \in H^2_2(B_R(0)) \cap H^1_0(B_R(0))$,
\[
 \left| \int_{\partial B_r(0)} \overline{f(x)} \nabla_n f(x) \ dS(x) \right|  \lesssim \frac{ \lVert f \rVert_{H^2_2}^2}{r^2}.
\]
\end{lemm}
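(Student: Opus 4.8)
The plan is to turn the surface integral over $\partial B_r(0)$ into a volume integral over the annular region $A:=\{\,x: r<|x|<R\,\}$ and then read off the decay in $r$ from the weighted factors. First I would dispose of the trivial case $r=R$, where the $H^1_0$-condition makes the trace of $f$ on $\partial B_R(0)$ vanish and the left-hand side is $0$; so assume $r<R$. Applying the Gauss--Green formula to the vector field $\overline f\,\nabla f$ on $A$ (legitimate since $f\in H^2(B_R)$), and using that the boundary contribution on $\partial B_R(0)$ drops out because $f|_{\partial B_R(0)}=0$, one obtains
\[
\int_{\partial B_r(0)}\overline f\,\nabla_n f\,dS \;=\; -\int_{A}\bigl(|\nabla f|^2 + \overline f\,\Delta f\bigr)\,dx ,
\]
with $n$ the outward unit normal of $B_r(0)$. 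It then remains to bound each of the two volume terms by $C\,r^{-2}\|f\|_{H^2_2}^2$.

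For the term $\overline f\,\Delta f$ this is immediate: on $A$ one has $1\le r^{-2}|x|^2$, so by Cauchy--Schwarz
\[
\int_{A}|f|\,|\Delta f|\,dx \;\le\; \frac{1}{r^2}\,\bigl\||x|^2 f\bigr\|_{L^2}\,\|\Delta f\|_{L^2}\;\le\;\frac{\|f\|_{H^2_2}^2}{r^2}.
\]
The term $\int_A|\nabla f|^2$ is the only genuinely non-routine point: to extract the factor $r^{-2}$ one wants to pull out $1\le r^{-2}|x|^2$ and be left with $\int_{B_R}|x|^2|\nabla f|^2$, but the $H^2_2$-norm does not obviously control this weighted gradient norm (the crude bound $\|\nabla f\|_{L^2(B_R)}^2$ has no decay in $r$ at all). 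The key auxiliary estimate is the interpolation-type inequality $\bigl\||x|\,\nabla f\bigr\|_{L^2(B_R)}\lesssim\|f\|_{H^2_2}$ with a constant independent of $R$. To prove it I would integrate $\int_{B_R}|x|^2|\nabla f|^2$ by parts (again the boundary term at $\partial B_R$ vanishes), producing $-\int_{B_R}|x|^2\overline f\,\Delta f - 2\int_{B_R}\overline f\,(x\cdot\nabla f)$; writing $X:=\||x|\nabla f\|_{L^2(B_R)}$ (finite since $B_R$ is bounded) and applying Cauchy--Schwarz gives the quadratic inequality $X^2\le \||x|^2 f\|_{2}\|\Delta f\|_{2}+2\|f\|_{2}\,X$, whence $X\lesssim \|f\|_{2}+\||x|^2 f\|_{2}^{1/2}\|\Delta f\|_{2}^{1/2}\lesssim\|f\|_{H^2_2}$.

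With this in hand, $\int_A|\nabla f|^2\le r^{-2}\int_{B_R}|x|^2|\nabla f|^2 = r^{-2}X^2\lesssim r^{-2}\|f\|_{H^2_2}^2$, and adding the two bounds gives the claim. The main obstacle is exactly the gradient-decay estimate just described: everything else is a one-line application of the divergence theorem, but the cancellation that makes $\||x|\nabla f\|_{2}$ controllable by $\|f\|_{H^2_2}$ (rather than by a constant growing with $R$) is what has to be spotted, and the quadratic-inequality argument is the cleanest way to extract it.
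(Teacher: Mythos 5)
Your proof is correct, but it follows a genuinely different route from the paper. The paper's argument first selects a good inner radius $r_s\in[s-1,s]$ by an averaging (mean-value over radii) argument, so that the flux $\int_{\partial B_{r_s}(0)}|f\,\nabla_n f|\,dS$ is already of size $(s-1)^{-2}\lVert f\rVert_{H_2}\lVert f\rVert_{H^1}$, and then applies Green's identity only on the thin annulus $B_s(0)\setminus B_{r_s}(0)$, using $|x|\ge s-1$ there to absorb the $\bar f\,\Delta f$ contribution into $(s-1)^{-2}\lVert f\rVert_{H^2_2}^2$. You instead integrate $\bar f\,\nabla f$ over the whole outer annulus $\{r<|x|<R\}$, use the $H^1_0$ trace to kill the flux through $\partial B_R(0)$, and are then forced to control $\int|\nabla f|^2$ over that region; the ingredient that makes this work with a constant uniform in $R$ and $r$ is your weighted interpolation bound $\lVert\,|x|\,\nabla f\rVert_{L^2(B_R)}\lesssim\lVert f\rVert_{H^2_2}$, obtained by integrating $|x|^2|\nabla f|^2$ by parts (the boundary term again vanishing by the Dirichlet condition, and finiteness of the weighted gradient norm on the bounded ball justifying the absorption) and solving the resulting quadratic inequality. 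That estimate does not appear in the paper's proof of this lemma, although the same integration-by-parts identity for $\int|x|^2|\nabla\psi|^2$ is used elsewhere in the paper (in the weighted energy estimates for Theorem \ref{theo:H2ExistBoundedMag}); in fact your treatment of the $|\nabla f|^2$ term is more explicit than the printed proof, which deals with it only implicitly. What each approach buys: the averaging trick confines all volume integrals to an annulus of unit width and avoids choosing between inner radii and global weighted bounds for the $\bar f\,\Delta f$ piece, while your version needs no selection of a special radius, isolates the one quantitative ingredient (the $R$-uniform bound on $\lVert|x|\nabla f\rVert_{2}$) as a cleanly reusable inequality, and does not even use the hypothesis $r>2$.
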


\begin{proof}
We introduce the notation $A_{r_1,r_2} := B_{r_2}(0) \setminus B_{r_1}(0) $ and $A_r := A_{r,\infty}$. For all $R\ge r>2$:
\begin{equation}
\begin{split}
    \inf_{r \in [s-1,s]} \int_{\partial B_r(0)} |f(x) \nabla_n f(x)| \ dS(x) & \leq \int_{s-1}^{s} \int_{\partial B_r(0)} |f(x) \nabla_n f(x)| \ dS(x) \ dr \\
    &\leq \int_{A_{s-1,s}} |f(x) \nabla f(x)| \ dx \\
    &\leq  \left( \int_{A_{s-1,s}} |f(x)|^2 \ dx \int_{A_{s-1,s}} |\nabla f(x)|^2 \ dx \right)^\frac{1}{2} \\
    &\leq  \frac{\lVert f \rVert_{H_2(B_{s})} \lVert f \rVert_{H^1(B_{s})}}{(s-1)^2}.
\end{split}
\end{equation}  
Since $h(s) = \int_{\partial B_s(0)} |f(x) \nabla_n f(x)| \ dS(x) $ is continuous, there exists $r_s \in [s-1,s]$ at which this infimum is attained.
Now notice that
\begin{equation}
\begin{split}
\label{eq:aux_eq}
    \left| \int_{\partial B_{s}(0)} \overline{f(x)} \nabla_n f(x) \ dS(x) \right| & \leq  
    \left| \int_{A_{r_s,s}} (\bar{f} \Delta f)(x) \ dx \right| + \left| \int_{A_{r_s,s}} |\nabla f(x)|^2 \ dx \right| \\
    &\qquad + \left| \int_{\partial B_{r_s}(0)} \bar{f}(x) \nabla_n f(x) \ dS(x) \right| \\
    &\leq  \int_{A_{r_s,s}} \left| (\bar{f} \Delta f)(x) \right|\ dx  + \int_{A_{r_s,s}} |\nabla f(x)|^2 \ dx \\
    &+ (s-1)^{-2} \lVert f \rVert_{H_2(B_{s})} \lVert f \rVert_{H^1(B_{s})}.
\end{split}
\end{equation}
It now remains to estimate the two last integrals in \eqref{eq:aux_eq}.
    We see that
\begin{equation}
\label{eq:zfw1}
\begin{split}    
     \left| \int_{A_{r_s,s}} (\bar{f} \Delta f)(x) \ dx \right| & \leq \left( \int_{A_{r_s,s}} |f(x)|^2 \ dx \int_{A_{r_s,s}} |\Delta f(x)|^2 \ dx \right)^\frac{1}{2} \\
     & \leq \frac{1}{(s-1)^2}\int_{A_{r_s,s}} (s-1)^4|f(x)|^2 \ dx + \frac{1}{(s-1)^2} \int_{A_{r_s,s}} |\Delta f(x)|^2 \ dx \\
     & \leq \frac{1}{(s-1)^2} \lVert f \rVert^2_{H^2_2(B_{s})}.
\end{split}
\end{equation}    
Here, we used that $f$ vanishes on $B_R(0)$ by the Dirichlet boundary condition and estimates \eqref{eq:aux_eq} and \eqref{eq:zfw1}.
    Therefore, we find since $n(x)=\frac{x}{\vert x \vert}$
\begin{equation}
\begin{split}
    \left| \int_{\partial B_r(0)} (|\psi|^2 A \cdot n)(x) \ dS(x) \right| &\leq \frac{\rho}{r} \int_{\partial B_r(0)} |\psi(x)|^2 x \cdot n \ dS(x) = \frac{\rho}{r} \int_{ B_r(0)} \nabla \cdot (|\psi(x)|^2 x) \ dx \\
    &\leq \frac{\rho}{r} \left(   3 r^{-4}  \int_{ B_r(0)}  |x|^4|\psi(x)|^2  \ dx + 2\int_{ B_r(0)} |x| |\psi(x)|  |\nabla \psi(x)| \ dx    \right) \\
    &\leq \frac{\rho}{r} \Bigg(   3 r^{-4}  \int_{ B_r(0)}  |x|^4|\psi(x)|^2  \ dx + r^{-1} \int_{ B_r(0)} |x|^4 |\psi(x)|^2  \ dx \\
    &+ r^{-1} \int_{ B_r(0)}  |\nabla \psi(x)|^2 \ dx    \Bigg) \leq r^{-2} C \lVert \psi \rVert^2_{H^2_2}.
\end{split}    
\end{equation}

Therefore there exists $C>0$ such that for all $R\ge s>2$, $f \in H^2_2$
\begin{equation}
\begin{split} 
\left| \int_{\partial B_{s}(0)} \bar{f}(x) \nabla_n f(x) \ dS(x) \right| \leq \frac{\frac{1}{4}C}{(s-1)^2} \lVert f \rVert^2_{H^2_2(B_{s})} \leq \frac{C}{s^2} \lVert f \rVert^2_{H^2_2(B_{s})}
\end{split}
\end{equation}
as required.
\end{proof}

We now want to compare the solution $\psi$, where $\psi$ solves the equation $ i \partial_t \psi = (H_0^B +V_{\operatorname{TD}})\psi $ on $\RR^3$ and has initial condition $\varphi_0 \in H^2_2$, to a solution $\psi^D$, where $\psi^D$ solves the same equation on a bounded domain $\Omega$ with Dirichlet boundary conditions and initial condition $\varphi^D_0 \in H^2(\Omega) \cap H^1_0(\Omega) $.

It is easy to check that the existence of such a solution on the bounded domain $\Omega$ follows as in Theorem \ref{theo:H2ExistBoundedMag}. The essential ingredient is the existence of a self-adjoint magnetic Dirichlet Laplacian with domain $H^2_2(\Omega) \cap H^1_0(\Omega)$ defined by the quadratic form 
$$ q: H^1_0(\Omega)^2 \rightarrow \RR \text{ with }q(\psi,\psi)=\langle (-i\nabla-A) \psi,(-i\nabla-A) \psi \rangle_{L^2}.$$
\begin{lemm}
\label{lemm:aux}
With the same notation and constants as in Theorem \ref{theo:H2ExistBoundedMag}, but now for any initial state $\varphi_0 \in H^2_2(\Omega)\cap H^1_0(\Omega)$ on $\Omega$ with Dirichlet boundary conditions, the equation 
\begin{equation}\begin{split}
\label{eq:Dirichlet}
    i \partial _t \psi(x,t) &= ( -i\nabla -A)^2 \psi(x,t) + V \psi(x,t)  \\
    \psi(x,0) &= \varphi_0(x)
\end{split}\end{equation}
has a unique solution $\psi \in L^\infty(0,T;H^2_2)$, and this $\psi$ satisfies $\left \lVert \psi \right \rVert _{L^\infty(0,T;H^2_2)} \leq C_{T,\alpha,\rho} \lVert \varphi_0 \rVert _{H^2_2}.$

Similarly, under the same notation and constants as in Lemma \ref{lemm:H11}, we have for the above equation $\left \lVert \psi \right \rVert _{L^\infty(0,T;H^2_2)} \leq C_{T,\alpha,\rho,\mathcal A} \lVert \varphi_0 \rVert _{H^2_2}.$
\end{lemm}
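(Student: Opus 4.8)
The plan is to transcribe onto $\Omega$ the three-step argument used on $\RR^3$: Lemma \ref{lemm:existenxe}'s contraction for existence, then the energy estimates of Lemma \ref{lemm:H11} and Theorem \ref{theo:H2ExistBoundedMag} to pass to the limit with $\Omega$-independent constants. The one structural change is that every integration by parts now produces a boundary integral over $\partial\Omega$, and all of these vanish because $\psi(\bullet,t)\in H^1_0(\Omega)$ has zero trace.

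First I would set up the free dynamics. The form $q(\psi,\psi)=\lVert(-i\nabla-A)\psi\rVert_{L^2(\Omega)}^2$ on $H^1_0(\Omega)$ is nonnegative, densely defined and closed (for $A\in W^{1,\infty}$ it is comparable to $\lVert\nabla\bullet\rVert_{L^2}^2$ up to lower order), and together with the $L^\infty$ part $W_{\operatorname{reg}}$ of $V$ and the KLMN-bounded $L^2$ part $W_{\operatorname{sing}}$ it defines a bounded-below self-adjoint operator $H_0^{B,D}$; by elliptic regularity up to the boundary its domain is $H^2(\Omega)\cap H^1_0(\Omega)$, so $e^{-itH_0^{B,D}}$ is a unitary group on $L^2(\Omega)$ restricting to a strongly continuous group on $H^2(\Omega)\cap H^1_0(\Omega)$ with $t$-uniform bound (graph norm $\simeq$ $H^2$ norm). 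Since $\Omega$ is bounded, $\lVert\bullet\rVert_{H^2_2(\Omega)}\simeq\lVert\bullet\rVert_{H^2(\Omega)}$, so this already plays the role of Lemma \ref{lemm:freegroup}, and because $A$ is bounded we keep the magnetic term inside the propagator rather than regularising it (which also avoids the discrete derivative $\nabla_h$, meaningless on $\Omega$). Mollifying only the time-dependent potential $V_{\operatorname{con}}$ as in \eqref{eq:regularizedV} to get $V^\epsilon_{\operatorname{con}}\in C^2_b$, the Duhamel/contraction argument of Lemma \ref{lemm:existenxe}, run in $C([0,T];H^2(\Omega)\cap H^1_0(\Omega))$ with $e^{-itH_0^{B,D}}$ in place of $e^{it\Delta}$, yields a unique mild solution $\psi_\epsilon$ (with an a priori $\epsilon,\Omega$-dependent bound, to be improved below).

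Next I would reproduce the energy estimates verbatim: multiply the equation by $(1+|x|^2)\overline{\psi_\epsilon}$, by $|x|^4\overline{\psi_\epsilon}$, and by $\partial_t\overline{\psi_\epsilon}$, take imaginary/real parts, and integrate over $\Omega$ exactly as in Lemma \ref{lemm:H11} and Theorem \ref{theo:H2ExistBoundedMag}. Every integration by parts over $\Omega$ produces boundary terms $\int_{\partial\Omega}(1+|x|^2)^k\,\overline{\psi_\epsilon}\,\nabla_n\psi_\epsilon\,dS$ or $\int_{\partial\Omega}|\psi_\epsilon|^2(\cdots)\cdot n\,dS$, all of which vanish since $\psi_\epsilon(\bullet,t)\in H^1_0(\Omega)$. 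Unitarity gives conservation of $\lVert\psi_\epsilon(t)\rVert_{L^2(\Omega)}$; the inequality $\lVert\nabla\psi_\epsilon\rVert_{L^2}^2=\langle-\Delta_D\psi_\epsilon,\psi_\epsilon\rangle\le\lVert\Delta\psi_\epsilon\rVert_{L^2}\lVert\psi_\epsilon\rVert_{L^2}$ together with the convex-domain bound $\lVert D^2u\rVert_{L^2(\Omega)}\le\lVert\Delta u\rVert_{L^2(\Omega)}$ gives the analogue of Lemma \ref{H^2 bound} with absolute constant; and the relative zero-boundedness $\int_\Omega|W_{\operatorname{sing}}||\psi|^2\le\delta\lVert\nabla\psi\rVert_{L^2}^2+C_\delta\lVert\psi\rVert_{L^2}^2$ holds on $\Omega$ via the (scale-invariant) Sobolev embedding. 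Since all constants entering the Gr\"onwall estimates then depend only on $T,\alpha,\rho,\mathcal A$, letting $\epsilon\downarrow0$ gives $\psi\in L^\infty(0,T;H^2_2(\Omega)\cap H^1_0(\Omega))$ with the stated bound (the $H^1_1$ statement follows by the same route run at the $H^1_1$ level, as in Lemma \ref{lemm:H11}); uniqueness is the $L^2$ energy estimate for the difference of two solutions.

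The main obstacle is precisely the uniformity of the constants in $\Omega$ — the energy method reproduces the exact $C_{T,\alpha,\rho}$ of Theorem \ref{theo:H2ExistBoundedMag} only if the elliptic-regularity constant, the Sobolev/Poincar\'e constants, and the relative bound for $W_{\operatorname{sing}}$ can be taken independent of $\Omega$. This is where convexity of $\Omega$ (in particular $\Omega$ a ball, the case needed in Theorem \ref{theo:Dirichlet}) is essential: it supplies $\lVert D^2u\rVert_{L^2(\Omega)}\le\lVert\Delta u\rVert_{L^2(\Omega)}$ for $u\in H^2(\Omega)\cap H^1_0(\Omega)$ with absolute constant, while the critical Sobolev inequality is scale-invariant and the subcritical terms only improve on larger domains. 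Everything else is a routine copy of the $\RR^3$ proofs with vanishing boundary contributions.
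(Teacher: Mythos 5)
Your proposal is correct and follows essentially the route the paper itself indicates: the paper gives no detailed proof of Lemma \ref{lemm:aux}, remarking only that existence "follows as in Theorem \ref{theo:H2ExistBoundedMag}" once the self-adjoint magnetic Dirichlet operator is defined via the quadratic form $q(\psi,\psi)=\langle(-i\nabla-A)\psi,(-i\nabla-A)\psi\rangle_{L^2}$ on $H^1_0(\Omega)$, which is exactly your starting point, followed by the same weighted energy estimates with the boundary terms killed by the zero trace. Your additional care about $\Omega$-uniformity of the constants (convexity of the ball for $\lVert D^2 u\rVert_{L^2}\le\lVert\Delta u\rVert_{L^2}$, extension by zero for the $W_{\operatorname{sing}}$ bound) and your replacement of the discrete-derivative regularization by the form-defined propagator are sensible refinements of, not departures from, the paper's argument.
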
 
The difference $\xi = \psi - \psi^D$ on $\Omega$ solves $i \partial_t \xi = (H_0^B +V_{\operatorname{TD}})\xi.$

Multiplying by $\bar \xi$, taking imaginary parts and integration by parts gives for $n$ the unit normal
\begin{equation*}
\begin{split}
\frac{1}{2} \frac{d}{dt} \int_\Omega |\xi(x)|^2 \ dx &= - \Im \left( \int_{\partial \Omega} (\overline{\xi} \nabla_n \xi)(x) \ dS \right) + \int_{\partial \Omega} |\psi(x)|^2 (A \cdot n)(x) \ dS.
\end{split}
\end{equation*}

If $\Omega = B_R(0)$ and $A = \frac{B_0}{2}(-x_2,x_1,0),$ the potential of a homogeneous field, then $A \cdot n = 0.$ While instead if $A$ is a bounded function with $ \lVert A \rVert_\infty \leq \rho $, then

\begin{equation*}
\begin{split}
    \left| \int_{\partial B_R(0)} |\psi(x)|^2 (A \cdot n)(x) \ dS(x) \right| &= \left\lvert  \int_{ B_R(0)} \nabla \cdot (|\psi|^2 A)(x) \ dx \right\rvert \le \left\lvert \rho \int_{ B_R(0)} |\psi(x)|^2+2 \vert (\psi \nabla \psi)(x) \vert  \ dx \right\rvert \\
    & \leq \frac{\rho}{R} \left(   3 R^{-4}  \int_{ B_R(0)}  |x|^4|\psi(x)|^2  \ dx + 2\int_{ B_R(0)} |x| |\psi(x)|  |\nabla \psi(x)| \ dx    \right) \\
    &\leq \frac{\rho}{R} \Bigg(   3 R^{-4}  \int_{ B_R(0)}  |x|^4|\psi(x)|^2  \ dx + R^{-1} \int_{ B_R(0)} |x|^4 |\psi(x)|^2  \ dx \\
    &+ R^{-1} \int_{ B_R(0)}  |\nabla \psi(x)|^2 \ dx    \Bigg) \lesssim R^{-2}  \lVert \psi \rVert^2_{H^2_2}.
\end{split}    
\end{equation*}

Now notice by Lemma \ref{lemm:decay}
\begin{equation*}
\begin{split}
    \int_{\partial B_R(0)} \overline{\xi(x)} \nabla_n \xi(x) \ dS(x) &\lesssim \frac{ \lVert \psi - \psi^D \rVert^2_{H^2_2}}{R^2}\lesssim \frac{\lVert \varphi_0 \rVert^2_{H^2_2} + \lVert \varphi_0^D \rVert^2_{H^2_2} }{R^2}.
\end{split}    
\end{equation*}

Thus, we have proven the following Theorem
\begin{theo}
\label{theo:Dirichlet}
Let $\psi$ be the solution to the linear Schrödinger equation with magnetic field \eqref{eq:bilSchr} with potentials and magnetic vector potentials satisfying Assumption \ref{ass:ass1}. Then, there is some absolute constant $C_{\rho,T}$ such that for any $R$ sufficiently large, where $\psi^D$ is the solution to the Dirichlet problem \eqref{eq:Dirichlet}
 on $B_R(0),$
\[ \Vert \psi-\psi^D \Vert_{2} \le C_{\rho,T}/R^2.\]
\end{theo}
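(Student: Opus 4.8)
The plan is to compare $\psi$ and $\psi^D$ through an $L^2$ energy estimate for the difference $\xi:=\psi-\psi^D$ on $B_R(0)$: since both solve the same equation there, $\xi$ is driven only by a flux through $\partial B_R(0)$, which the $H^2_2$ bounds together with Lemma~\ref{lemm:decay} and Remark~\ref{lm:InequalityForNormOutsideBall} should pin down as $O(R^{-2})$. First I would record the a priori bounds: Theorem~\ref{theo:H2ExistBoundedMag} (or Theorem~\ref{theo:magfield} for a homogeneous field) gives $\sup_{t\in[0,T]}\lVert\psi(t)\rVert_{H^2_2}\le C_{T,\alpha,\rho}\lVert\varphi_0\rVert_{H^2_2}$, and Lemma~\ref{lemm:aux} gives the same bound for the Dirichlet solution $\psi^D$ on $B_R(0)$, with a constant uniform in $R$ (passing to a bounded domain with Dirichlet conditions does not worsen any of the estimates). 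As Dirichlet datum I would take a smooth truncation $\varphi^D_0\in H^2(B_R(0))\cap H^1_0(B_R(0))$ of $\varphi_0$ that coincides with $\varphi_0$ on $B_{R/2}(0)$, so that $\lVert\varphi^D_0\rVert_{H^2_2}\lesssim\lVert\varphi_0\rVert_{H^2_2}$ and, by Remark~\ref{lm:InequalityForNormOutsideBall}, $\lVert\varphi_0-\varphi^D_0\rVert_{2}\lesssim\lVert\varphi_0\rVert_{H^2_2}R^{-2}$.

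Next I would multiply the equation $i\partial_t\xi=(-i\nabla-A)^2\xi+(V+V_{\operatorname{TD}})\xi$ by $\overline{\xi}$, integrate over $B_R(0)$ and take imaginary parts. The real multiplication potentials cancel, and integrating the magnetic Laplacian by parts leaves only boundary contributions on $\partial B_R(0)$, where $\psi^D$ vanishes so that $\xi$ may be replaced by $\psi$ in the pointwise boundary data:
\[\frac12\frac{d}{dt}\lVert\xi(t)\rVert_{2}^2=-\Im\int_{\partial B_R(0)}\overline{\xi}\,\nabla_n\xi\ dS+\int_{\partial B_R(0)}(A\cdot n)\,|\psi(\bullet,t)|^2\ dS.\]
The magnetic boundary term is easy: for $A=\frac{B_0}{2}(-x_2,x_1,0)$ one has $A\cdot x\equiv 0$, so $A\cdot n=0$ and the term vanishes; for $A\in W^{1,\infty}$ I would apply the divergence theorem, use that the same integrand integrates to zero over $\RR^3$ by the decay of $\psi$ to rewrite it as $-\int_{B_R(0)^c}\nabla\cdot(A|\psi|^2)$, and bound it by $\lVert A\rVert_{W^{1,\infty}}\bigl(\lVert\psi\indic_{B_R(0)^c}\rVert_2^2+\lVert\psi\indic_{B_R(0)^c}\rVert_2\lVert\nabla\psi\rVert_2\bigr)\lesssim\lVert\psi\rVert_{H^2_2}^2R^{-2}$ via Remark~\ref{lm:InequalityForNormOutsideBall}.

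The flux term $\Im\int_{\partial B_R(0)}\overline{\xi}\,\nabla_n\xi\ dS$ is the heart of the matter, and this is exactly what Lemma~\ref{lemm:decay} is designed to handle: it controls such a boundary flux by $R^{-2}$ times the square of the weighted $H^2_2$ norm, so I would obtain $\left|\int_{\partial B_R(0)}\overline{\xi}\,\nabla_n\xi\,dS\right|\lesssim\lVert\xi\rVert_{H^2_2}^2R^{-2}\lesssim\bigl(\lVert\varphi_0\rVert_{H^2_2}^2+\lVert\varphi^D_0\rVert_{H^2_2}^2\bigr)R^{-2}$. The delicate point is that $\xi$ is not supported away from $\partial B_R(0)$, so one must check the thin-annulus argument of Lemma~\ref{lemm:decay} still runs for $\xi$; this works because the Dirichlet trace of $\xi$ on $\partial B_R(0)$ is that of $\psi$, which carries the required weighted decay. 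If one prefers to stay within the literal hypotheses, one may instead split $\overline{\xi}\,\nabla_n\xi=\overline{\psi}\,\nabla_n\psi-\overline{\psi}\,\nabla_n\psi^D$ on $\partial B_R(0)$, use $\Im\int_{\partial B_R(0)}\overline{\psi}\,\nabla_n\psi=\Im\int_{B_R(0)}\overline{\psi}\,\Delta\psi=-\Im\int_{B_R(0)^c}\overline{\psi}\,\Delta\psi=O(R^{-2})$ (the $\RR^3$-integral vanishing by the equation and conservation of $\lVert\psi\rVert_2$), and bound the cross term by Cauchy--Schwarz against the uniform-in-$R$ trace estimate for $\nabla\psi^D$ together with the radial averaging of $r\mapsto\int_{\partial B_r(0)}|\psi|^2\,dS$ used inside Lemma~\ref{lemm:decay}.

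Inserting these two bounds into the energy identity makes its right-hand side at most $C_{T,\alpha,\rho}R^{-2}$ uniformly on $[0,T]$; integrating in time and adding the initial discrepancy $\lVert\xi(0)\rVert_2^2=\lVert\varphi_0-\varphi^D_0\rVert_2^2\lesssim R^{-4}$ yields $\sup_{t\in[0,T]}\lVert\psi(t)-\psi^D(t)\rVert_2\le C_{\rho,T}/R^2$, which is the claim. I expect the only genuine obstacle to be the flux term — that is, propagating the weighted Sobolev control on $\psi$ and $\psi^D$ into quantitative decay of the boundary flux — since everything else is a routine energy computation. A structurally cleaner variant, avoiding boundary terms entirely, is to compare $\psi^D$ with the cutoff $\chi_R\psi\in H^1_0(B_R(0))$: it solves the equation up to a source of $L^2$-norm $O(R^{-2})$ supported in $B_R(0)\setminus B_{R/2}(0)$, and since the magnetic Dirichlet Laplacian is self-adjoint the energy estimate for $\chi_R\psi-\psi^D$ carries no boundary contribution, so a Gronwall argument together with $\lVert(1-\chi_R)\psi\rVert_2\lesssim R^{-2}$ closes it.
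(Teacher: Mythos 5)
Your proposal follows essentially the same route as the paper: an $L^2$ energy identity for $\xi=\psi-\psi^D$ on $B_R(0)$, with the magnetic boundary term vanishing for the homogeneous field (or handled via the divergence theorem and the weighted decay of $\psi$ for bounded $A$), and the flux term $\Im\int_{\partial B_R(0)}\overline{\xi}\,\nabla_n\xi\,dS$ controlled by Lemma \ref{lemm:decay} using the uniform $H^2_2$ bounds from Theorem \ref{theo:H2ExistBoundedMag} and Lemma \ref{lemm:aux}. Your remark that $\xi$ does not literally satisfy the $H^1_0(B_R(0))$ hypothesis of Lemma \ref{lemm:decay} flags a point the paper itself glosses over, and your suggested workarounds (boundary trace of $\xi$ equals that of $\psi$, or the cutoff comparison) resolve it.
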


\section{Hartree equation}
\label{sec:Hartree}
We now turn to the evolution of an effective single particle described by the Hartree equation with Schr\"odinger operator $H_0=(-i\nabla-A)^2+V: D(H_0) \subset L^2(\mathbb R^3) \rightarrow L^2(\mathbb R^3)$, static \emph{pinning potential} $V$, \emph{control potential} $V_{\operatorname{con}},$ and time-dependent control function $u \in W^{1,1}(0,T)$ such that $V_{\operatorname{TD}}(t):= u(t) V_{\operatorname{con}}:$
\begin{equation}
\begin{split}
\label{eq:HF}
i \partial_t \psi(x,t) &= (H_0 + V_{\operatorname{TD}}(t))\psi(x,t)+\left(\vert \psi(\bullet,t) \vert^2*\frac{1}{\vert \bullet \vert} \right)(x)\psi(x,t), \quad (x,t) \in \mathbb R^3 \times (0,T) \\
\psi(\bullet,0)&=\varphi_0.
\end{split}
\end{equation}

under Assumption \ref{ass:ass1} on the potentials.

\begin{rem}
Since we discussed the magnetic field and its complications already in the first part of this article, we shall neglect it in the treatment of the Hartree equation in the sequel to simplify the presentation.
\end{rem}

\subsection{Local Existence of Solutions to the Hartree equation}

We first need to show that equation \eqref{eq:HF} has a unique solution on $\RR^3\times [0,T]$. To do this we start by showing that a unique local solution exists and use an energy estimate to show the local solution can be extended into a unique global solution.

To show the existence of the local solution we collect in the following Lemma some basic estimates on the Hartree non-linearity, as in \cite[Lemma $2.3$]{B}, that we shall frequently use throughout this section:
\begin{lemm}
\label{EuanRulesForF}

Let $\Omega \subset \mathbb R^3$ be a domain and $\psi\in H^1(\Omega)$, we then define $F_{\Omega}(\psi)(x)=\int_{\Omega} \frac{|\psi(x)|^2}{|x-y|} \ dy \ \psi(x)$ and just write $F:=F_{\mathbb R^3}.$
Then, there are constants $c,C_F>0$ independent of $\Omega$ such that 
\begin{enumerate}
    \item For all $\psi,\phi\in H^1(\Omega)$ we have $||F_{\Omega}(\psi)-F_{\Omega}(\phi)||_{2}\le c (||\psi||^2_{H^1}+||\phi||^2_{H^1})||\psi-\phi||_{2}.$
        \item There is $C_F>0$ such that for all $\psi,\phi\in H^2_2(\Omega)$
    \begin{equation*}\begin{split}
        ||F_{\Omega}(\psi)-F_{\Omega}(\phi)||_{H^2_2}&\leq C_F(||\psi||^2_{H^1}+||\phi||^2_{H^2_2})||\psi-\phi||_{H^2_2}\text{ and }\\
        ||F_{\Omega}(\psi)||_{H^2_2}&\leq C_F||\psi||^2_{H^1}||\psi||_{H^2_2}.
    \end{split}\end{equation*}
\end{enumerate}
\end{lemm}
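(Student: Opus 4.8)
The plan is to estimate the Hartree nonlinearity $F_\Omega(\psi)(x) = \bigl(|\psi|^2 * \tfrac{1}{|\bullet|}\bigr)(x)\,\psi(x)$ by reducing everything to the behaviour of the Riesz potential $I_1 g = g * \tfrac{1}{|\bullet|}$ and then using Sobolev embeddings and the weighted algebra structure. For part (1), I would first write the telescoping identity
\[
F_\Omega(\psi) - F_\Omega(\phi) = \Bigl(I_1(|\psi|^2)\Bigr)(\psi-\phi) + \Bigl(I_1(|\psi|^2 - |\phi|^2)\Bigr)\phi,
\]
and bound each term in $L^2$. For the first term, $\|I_1(|\psi|^2)\|_\infty \lesssim \||\psi|^2\|_{L^{3/2+}} + \||\psi|^2\|_{L^{3/2-}} \lesssim \|\psi\|_{H^1}^2$ by the Hardy–Littlewood–Sobolev inequality together with $H^1(\mathbb R^3) \hookrightarrow L^p$ for $2 \le p \le 6$ (splitting the Riesz kernel into its $|x|\le 1$ and $|x|\ge 1$ parts takes care of both integrability requirements); then multiplying by $\|\psi-\phi\|_2$ gives the claim. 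For the second term, factor $|\psi|^2 - |\phi|^2 = \Re\bigl((\bar\psi+\bar\phi)(\psi-\phi)\bigr)$, use $\|I_1 g\|_\infty \lesssim \|g\|_{L^{3/2\pm}}$ again with $g = (\bar\psi+\bar\phi)(\psi-\phi)$ estimated by Hölder ($L^3 \cdot L^6$ and $L^6 \cdot L^3$ type bounds, all controlled by $H^1$), and then bound $\|\phi\|_\infty$... no — here one keeps $\phi$ in $L^2$: actually write $\|I_1(|\psi|^2-|\phi|^2)\,\phi\|_2 \le \|I_1(|\psi|^2-|\phi|^2)\|_\infty \|\phi\|_2$, and the $L^\infty$ norm is $\lesssim (\|\psi\|_{H^1}+\|\phi\|_{H^1})\|\psi-\phi\|_2$; combining yields the quadratic-times-linear bound. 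The constant is manifestly independent of $\Omega$ since all estimates are obtained by extending functions by zero to $\mathbb R^3$ and using that the Riesz kernel is positive, so $F_\Omega(\psi) \le F_{\mathbb R^3}(\tilde\psi)$ pointwise for the extension $\tilde\psi$.

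For part (2), the $H^2_2$ estimates require controlling $\Delta\bigl(I_1(|\psi|^2)\psi\bigr)$ and $(1+|x|^2)I_1(|\psi|^2)\psi$ in $L^2$. The key structural fact is that $-\Delta I_1 g = 4\pi g$, so $\Delta$ hitting the convolution factor simply returns $|\psi|^2$ (up to a constant), which is the mechanism that keeps the regularity from degrading. Expanding by the Leibniz rule,
\[
\Delta\bigl(I_1(|\psi|^2)\,\psi\bigr) = \bigl(\Delta I_1(|\psi|^2)\bigr)\psi + 2\nabla I_1(|\psi|^2)\cdot\nabla\psi + I_1(|\psi|^2)\,\Delta\psi,
\]
one estimates: the first term by $\||\psi|^2\,\psi\|_2 \lesssim \|\psi\|_{H^1}^2\|\psi\|_{H^2}$ (Sobolev: $\psi \in L^6 \cap L^\infty$-ish via $H^2 \hookrightarrow L^\infty$ in $3$D — more precisely $\|\psi^3\|_2 \le \|\psi\|_\infty^2\|\psi\|_2 \lesssim \|\psi\|_{H^2}^2\|\psi\|_2$, but one wants only two $H^1$ factors, so instead use $\|\psi\|_{L^6}^2\|\psi\|_{L^6} \lesssim \|\psi\|_{H^1}^2 \|\psi\|_{H^1}$ when $\psi \in H^1$ suffices, upgrading one factor to $H^2_2$ only where needed); the second term via $\|\nabla I_1(|\psi|^2)\|_\infty = \|I_1(\nabla|\psi|^2)\|_\infty \lesssim \||\psi||\nabla\psi|\|_{L^{3/2\pm}} \lesssim \|\psi\|_{H^1}^2$ times $\|\nabla\psi\|_2 \le \|\psi\|_{H^2_2}$; the third term via $\|I_1(|\psi|^2)\|_\infty \lesssim \|\psi\|_{H^1}^2$ times $\|\Delta\psi\|_2 \le \|\psi\|_{H^2_2}$. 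The weighted term is easiest: $\|(1+|x|^2)I_1(|\psi|^2)\psi\|_2 \le \|I_1(|\psi|^2)\|_\infty \|(1+|x|^2)\psi\|_2 \lesssim \|\psi\|_{H^1}^2\|\psi\|_{H^2_2}$ since no weight needs to fall on the nonlocal factor. This proves the second inequality in (2); the first (Lipschitz) inequality follows by the same telescoping as in part (1), distributing the difference quotient $\psi-\phi$ across the three Leibniz terms and across $|\psi|^2-|\phi|^2$, and always leaving the "two quadratic factors" in $H^1$ or $H^2_2$ as the statement dictates (with $\phi$ carrying the $H^2_2$ when $\psi$ carries $H^1$ and vice versa, as in the asymmetric bound written).

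The main obstacle is bookkeeping the exponents so that each term genuinely has the claimed homogeneity — two factors controlled in the weaker $H^1$ norm and only one in the stronger $H^2_2$ norm — rather than the naive $\|\psi\|_{H^2_2}^3$. This forces one to be careful about which Sobolev embedding is invoked where: the gradient-of-Riesz-potential term and the $\Delta\psi$ term each naturally spend their "third slot" on a genuine second derivative (hence $H^2_2$), while the remaining two slots must be closed using only $H^1 \hookrightarrow L^6$ and Hardy–Littlewood–Sobolev, never $H^2 \hookrightarrow L^\infty$ for more than one factor. A secondary technical point is justifying $\Delta I_1 g = -4\pi g$ and the commutation $\nabla I_1 = I_1 \nabla$ at the level of $L^2$ functions, which is standard (density of Schwartz functions, or working on the Fourier side) and which also makes the independence of the constants from $\Omega$ transparent, since one always passes to zero-extensions on all of $\mathbb R^3$ before applying these identities. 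Since the paper explicitly defers to \cite[Lemma 2.3]{B}, I would at most sketch these steps and cite that reference for the routine details.
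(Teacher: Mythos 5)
The paper itself offers no proof of this lemma: it is quoted verbatim from \cite[Lemma 2.3]{B}, so there is nothing internal to compare your sketch against, and it has to stand on its own. Its architecture is the standard one (and essentially Baudouin's): telescope the difference, bound the Hartree potential $|\psi|^2*|x|^{-1}$ in $L^\infty$ by $\|\psi\|_{H^1}^2$, use $-\Delta\bigl(g*|x|^{-1}\bigr)=4\pi g$ together with the Leibniz rule for the $H^2$ part, and let the weight $(1+|x|^2)$ fall entirely on the local factor $\psi$. Those parts are fine.

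There is, however, a genuine gap in the two places where you invoke an $L^\infty$ bound for the convolution of $|x|^{-1}$ with a product that is only in $L^{3/2}$. In part (1), the second telescoping term requires $\bigl\lVert \bigl((\bar\psi+\bar\phi)(\psi-\phi)\bigr)*|x|^{-1}\bigr\rVert_\infty \lesssim (\lVert\psi\rVert_{H^1}+\lVert\phi\rVert_{H^1})\,\lVert\psi-\phi\rVert_2$, i.e.\ the difference may only be charged in $L^2$. But with $\psi-\phi\in L^2$ and $\psi+\phi\in L^6$ the product lies only in $L^{3/2}\cap L^1$, and convolution with $|x|^{-1}$ maps $L^{3/2+\epsilon}\cap L^{3/2-\epsilon}$ into $L^\infty$ while failing at the endpoint $L^{3/2}$; the ``$L^3\cdot L^6$'' H\"older pairings you mention would instead charge $\psi-\phi$ in $L^3$ or $L^6$, i.e.\ cost $\lVert\psi-\phi\rVert_{H^1}$, which the statement does not allow. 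The same endpoint problem appears in part (2) for $\lVert\nabla(|\psi|^2*|x|^{-1})\rVert_\infty$: for $\psi\in H^1$ one only has $|\psi|\,|\nabla\psi|\in L^{3/2}$, not $L^{3/2+\epsilon}$. Both estimates are nevertheless true, and the standard repair is Cauchy--Schwarz in the $y$-integral followed by Hardy's inequality $\int_{\mathbb R^3}|x-y|^{-2}|v(y)|^2\,dy\le 4\lVert\nabla v\rVert_2^2$: this gives $\lVert(\bar v\,w)*|x|^{-1}\rVert_\infty\le 2\lVert\nabla v\rVert_2\lVert w\rVert_2$ with $v=\psi+\phi$, $w=\psi-\phi$, and, applied to the kernel $\nabla|x|^{-1}\sim|x|^{-2}$, directly $\lVert\nabla(|\psi|^2*|x|^{-1})\rVert_\infty\le 4\lVert\nabla\psi\rVert_2^2$. (Alternatively, avoid $L^\infty$ altogether: $\lVert(vw)*|x|^{-1}\rVert_{L^6}\lesssim\lVert vw\rVert_{L^{6/5}}\le\lVert v\rVert_{L^3}\lVert w\rVert_{L^2}$ by non-endpoint Hardy--Littlewood--Sobolev, then pair with $\lVert\phi\rVert_{L^3}$.) A secondary caveat: your argument for $\Omega$-independence via extension by zero only works when the zero-extension stays in $H^1$ (e.g.\ $H^1_0(\Omega)$, which covers the paper's Dirichlet problems on balls); for a general domain and general $H^1(\Omega)$ functions the Sobolev and Hardy constants of an extension are not free, so either state the lemma for zero-trace functions or invoke the positivity of the kernel only after such an extension is available.
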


For the linear part of the nonlinear evolution equation \eqref{eq:HF}, Theorem \ref{theo:H2ExistBoundedMag} and Lemma \ref{lemm:aux} guarantee the existence of evolution operators associated with the time-dependent Hamiltonian
\[    H(t) = (-i \nabla-A)^2+W_{\operatorname{reg}}+W_{\operatorname{sing}}+u(t)V_{\operatorname{con}}\]
which is a family $\{U(t,s):t,s\in [0,T]\}$ on $H^2_2(\Omega) \cap H^1_0(\Omega)$ such that for all $\psi_0\in H^2_2(\Omega) \cap H^1_0(\Omega)$ the following properties hold
\begin{enumerate}
    \item $U(t,s)U(s,r)\psi_0=U(t,r)\psi_0$ and $U(t,t)\psi_0=\psi_0,$
    \item $(t,s)\mapsto U(t,s)\psi_0$ is strongly continuous in $L^2$ on $[0,T]^2$ and $U(t,s)$ is an isometry on $L^2(\Omega),$
    \item For all $ s,t\in [0,T], U(t,s)\in \mathcal L(H^2_2)$ and $(t,s)\mapsto U(t,s)\psi_0$ is weakly continuous from $[0,T]^2$ to $H^2_2(\Omega)$. Moreover there is 
    \[M=M(T,||u||_W^{1,1},||W_{\operatorname{sing}}||_{L^p},||\langle\bullet\rangle^{-2}W_{\operatorname{reg}}||_{L^\infty},||\langle\bullet\rangle^{-2}V_{\operatorname{con}}||_{L^\infty})\]
    
    such that $||U(t,s)\psi||_{H^2_2}\leq M||\psi||_{H^2_2}.$
    \item In the $L^2(\Omega)$-sense, we have $i\partial_t U(t,s)\psi_0=H(t)U(t,s)\psi_0$ and $i\partial_s U(t,s)\psi_0=U(t,s)H(t)\psi_0.$
\end{enumerate}
The evolution operators exist by Theorem \ref{theo:H2ExistBoundedMag}.

Now we can prove the existence and uniqueness of a local solution using a simple contraction argument on $\mathbb R^3:$
\begin{lemm}
\label{EuanExistenceLocalSoln}
For $T_0>0$ small enough, there exists a unique solution in $H^2_2(\Omega)$ to
\begin{equation*}
\begin{split}
i \partial_t \psi(x,t) &= (H_0 + V_{\operatorname{TD}}(t))\psi(x,t)+\left(\vert \psi(\bullet,t) \vert^2*\frac{1}{\vert \bullet \vert} \right)(x)\psi(x,t), \quad (x,t) \in \mathbb R^3 \times (0,T_0) \\
\psi(\bullet,0)&=\varphi_0.
\end{split}
\end{equation*}

\end{lemm}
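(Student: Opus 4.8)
The plan is to set up a standard Banach fixed-point argument in the space $X_{T_0} = C([0,T_0];H^2_2(\RR^3))$ (taking $\Omega = \RR^3$ as stated), using the evolution operators $U(t,s)$ supplied by Theorem \ref{theo:H2ExistBoundedMag} for the linear part and treating the Hartree nonlinearity $F(\psi) = (|\psi|^2 * |\bullet|^{-1})\psi$ as a perturbation. Concretely, I would define, for $\psi \in X_{T_0}$,
\[
\Phi(\psi)(t) = U(t,0)\varphi_0 - i\int_0^t U(t,s)\, F(\psi(s))\, ds,
\]
and seek a fixed point in the closed ball $B_\sigma = \{\psi \in X_{T_0} : \|\psi\|_{X_{T_0}} \le \sigma\}$ where $\sigma := 2M\|\varphi_0\|_{H^2_2}$ and $M$ is the evolution-operator bound from property (3) above.

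First I would check that $\Phi$ maps $B_\sigma$ into itself. Since $\|U(t,0)\varphi_0\|_{H^2_2} \le M\|\varphi_0\|_{H^2_2}$ and, by property (3), $\|U(t,s) F(\psi(s))\|_{H^2_2} \le M\|F(\psi(s))\|_{H^2_2}$, the quantitative bound $\|F_\Omega(\psi)\|_{H^2_2} \le C_F\|\psi\|_{H^1}^2\|\psi\|_{H^2_2}$ from Lemma \ref{EuanRulesForF}(2) gives
\[
\|\Phi(\psi)(t)\|_{H^2_2} \le M\|\varphi_0\|_{H^2_2} + M\,C_F\, T_0\, \sigma^3,
\]
so choosing $T_0$ small enough that $M C_F T_0 \sigma^2 \le \tfrac12$ keeps $\Phi(\psi)$ in $B_\sigma$. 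Next I would verify the contraction property: for $\psi,\phi \in B_\sigma$,
\[
\|\Phi(\psi)(t) - \Phi(\phi)(t)\|_{H^2_2} \le M \int_0^t \|F(\psi(s)) - F(\phi(s))\|_{H^2_2}\, ds \le M\, C_F\, T_0\, (\|\psi\|_{H^1}^2 + \|\phi\|_{H^2_2}^2)\, \|\psi - \phi\|_{X_{T_0}},
\]
using Lemma \ref{EuanRulesForF}(2) again; shrinking $T_0$ further so that $2 M C_F T_0 \sigma^2 < 1$ makes $\Phi$ a strict contraction. The Banach fixed-point theorem then yields a unique $\psi \in B_\sigma$ with $\Phi(\psi) = \psi$; uniqueness in all of $X_{T_0}$ (not just the ball) follows by a standard argument, comparing any two solutions on a possibly shorter interval and using the local Lipschitz estimate plus Gronwall. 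Finally, one observes that a fixed point of $\Phi$ is a mild solution of \eqref{eq:HF} on $(0,T_0)$, and the regularity $\psi \in C([0,T_0];H^2_2)$ together with property (4) of the evolution family upgrades this to the claimed solution.

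The main obstacle — though a fairly routine one given the tools assembled — is making sure the Duhamel term $\int_0^t U(t,s)F(\psi(s))\,ds$ genuinely lands in $C([0,T_0];H^2_2)$ and not merely in a weaker space: property (3) only asserts \emph{weak} continuity of $(t,s)\mapsto U(t,s)\psi_0$ into $H^2_2$, so the continuity of $t \mapsto \Phi(\psi)(t)$ in the $H^2_2$-norm needs a short separate argument (e.g. splitting the integral, using strong $L^2$-continuity together with the uniform $H^2_2$-bound and interpolation, or a density argument). A secondary technical point is that the smallness threshold for $T_0$ depends on $\|\varphi_0\|_{H^2_2}$ through $\sigma$ and on $M$, which is acceptable here since only \emph{local} existence is claimed; the passage to a global solution via the energy estimate is deferred to the subsequent results.
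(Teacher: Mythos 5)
Your proposal is correct and follows essentially the same route as the paper: the same Duhamel map $\Lambda(\psi)=U(\bullet,0)\varphi_0-i\int_0^\bullet U(\bullet,s)F(\psi(s))\,ds$, the same ball of radius $2M\|\varphi_0\|_{H^2_2}$, the same use of Lemma \ref{EuanRulesForF} to get the self-map and contraction bounds (your smallness condition $2MC_FT_0\sigma^2<1$ is literally the paper's $8\tau C_FM^3\|\varphi_0\|_{H^2_2}^2<1$), and the same Gronwall argument for uniqueness. The only cosmetic difference is that the paper works in $L^\infty((0,\tau);H^2_2)$ rather than $C([0,T_0];H^2_2)$, which quietly sidesteps the strong-continuity issue you flag at the end.
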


\begin{proof}

Consider the functional $\Lambda:\psi \mapsto U(\bullet,0)\psi_0-i\int_0^\bullet U(\bullet,s)F(\psi(s))ds $

and the set $
    B = \{\psi \in L^\infty((0,\tau);H^2_2),||\psi||_{L^\infty(0,\tau;H^2_2)}\leq 2M||\psi_0||_{H^2_2}\}.$

We want $\tau$ to be sufficiently small that $\Lambda$ is a contraction on $B$, i.e. for $\psi\in B$ and Lemma \ref{EuanRulesForF}
\begin{align*}
    ||\Lambda(\psi)(t)||_{H^2_2} &= \left\lVert U(t,0)\psi_0-i\int_0^t U(t,s)F(\psi(s))ds \right\rVert_{H^2_2}\\
    &\leq M||\psi_0||_{H^2_2}+8\tau M^4C_F ||\psi_0||_{H^2_2}.
\end{align*}

So we need to choose $\tau>0$ sufficiently small that $8\tau C_F M^3||\psi_0||^2_{H^2_2}<1$, and then
\begin{equation*}\begin{split}
    ||\Lambda(\psi)||_{L^\infty(0,\tau;H^2_2)}\leq 2M||\psi_0||_{H^2_2}.
\end{split}\end{equation*}

So $\Lambda(\psi)\in B$, and thus $\Lambda$ maps $B$ into itself. Now, for $\psi,\phi\in B$ then for all times $ t\in[0,\tau]$, using again Lemma \ref{EuanRulesForF}
\begin{align*}
    ||\Lambda(\psi)(t)-\Lambda(\phi)(t)||_{H^2_2}&=\left\lVert \int_0^t U(t,s)(F(\psi(s))-F(\phi(s)))\ ds\right\rVert_{H^2_2}\\
    &\leq 8\tau C_F M^3 ||\psi_0||^2_{H^2_2}||\psi-\phi||_{L^\infty([0,\tau];H^2_2)}.
\end{align*}

So as $8\tau C_F M^3 ||\psi_0||^2_{H^2_2}<1$, $\Lambda$ is a contraction on $B$ and so there is a unique solution to
\begin{equation*}\begin{split}
    \psi(t) = U(t,0)\psi_0-i\int_0^t U(t,s)F(\psi(s))ds.
\end{split}\end{equation*}
 So if we apply $\partial_t$ to this equation, and using that $i\partial_t U(t,s)\psi_0=H(t)U(t,s)\psi_0$ we get
\begin{equation*}\begin{split}
    \partial_t \psi &= i\Delta \psi - iW_{\operatorname{reg}}\psi - iW_{\operatorname{sing}}\psi - iu(t)V_{\operatorname{con}}\psi-iF(\psi(t))
\end{split}\end{equation*}
and so $\psi\in L^\infty((0,\tau);H^2_2)$ means that $F(\psi)\in L^\infty((0,\tau);H^2_2)$ and $\Delta\psi\in L^\infty((0,\tau);L^2)$.
We then recall the elementary bounds, which follow from Assumption \ref{ass:ass1},
\begin{equation*}\begin{split}
    ||W_{\operatorname{reg}}\psi(t)||_{2}&\leq \left\lVert \frac{W_{\operatorname{reg}}}{1+|\bullet|^2} \right\rVert_{L^\infty}||\psi(t)||_{H_2}\text{ and }
||u(t)V_{\operatorname{con}}\psi(t)||_{2}\leq \left\lVert\frac{ u(t)V_{\operatorname{con}}}{1+|\bullet|^2}\right\rVert_{L^\infty}||\psi(t)||_{H_2}.
\end{split}\end{equation*}

Now using \cite[Remark 3.8]{BH} $W_{\operatorname{sing}}\in L^2(\mathbb{R}^3)$ is infinitesimally bounded with respect to the negative Laplacian. Thus, for any $\epsilon > 0$ there is $C_\epsilon >0$ such that 
\begin{equation*}\begin{split}
    ||W_{\operatorname{sing}}\psi(t)||_{2}\leq \epsilon || -\Delta \psi(t)||_{2}+C_\epsilon ||\psi(t)||_{2}.
\end{split}\end{equation*}

This implies that $||W_{\operatorname{sing}}\psi(t)||_{2}\lesssim ||\psi(t)||_{H^2_2}.$
By combining the previous bounds, we find $||\partial_t \psi(t)||_{2} \leq C_2||\psi_0||_{H^2_2}.$

So it remains to prove that the solution is unique. Let $\psi$ and $\psi'$ be two solutions and $\phi=\psi-\psi'$. Then $\phi(0)=0$ and by subtracting the Schr\"odinger equation for each of the solutions, multiplying by $\overline{\phi}$, integrating over $\RR^3$ and taking the imaginary part we get
\begin{equation*}\begin{split}
        \Im\int_{\RR^3} i (\partial_t \phi) \overline{\phi}\ dx &=  \Im\int_{\RR^3}\left((-\Delta+W_{\operatorname{sing}}+W_{\operatorname{reg}}+u(t)V_{\operatorname{con}})\phi + F(\psi)-F(\psi')\right)\overline{\phi} \ dx.
        \end{split}\end{equation*}
        This shows that 
        \begin{equation*}\begin{split}
        \frac{1}{2}\frac{d}{dt}||\phi||^2_{2} &\lesssim  ||\phi||_{2}(||\psi||^2_{H^1}+||\psi'||^2_{H^1})||\psi-\psi'||_{2}\lesssim  ||\phi||^2_{2}.
\end{split}\end{equation*}

Then by Gronwall's inequality $||\phi||^2_{2}=0$ so $\psi=\psi'$ in $L^2$ and so the solution is unique.
\end{proof}

\subsection{Energy Estimate}

To extend our local solution to a global one we need an energy estimate of the solution of the Hartree equation for any arbitrary time $T$ and show that the energy is bounded on our interval $[0,T]$.

Multiplying the equation by $\partial_t\overline{\psi}$, integrating over $\mathbb{R}^3$, and taking the real part shows
\[ \Re \frac{d}{dt}\int_{\mathbb{R}^3} |\nabla\psi|^2  \ dx+\Re\int_{\mathbb{R}^3}\frac{1}{2}(W_{\operatorname{sing}}+W_{\operatorname{reg}}+u(t)V_{\operatorname{con}})\partial_t(|\psi|^2) + F(\psi)\partial_t\overline{\psi} \ dx=0.\]

Now to deal with the last term, we calculate
\begin{equation*}
    \begin{split}
        \frac{d}{dt}\int_{\RR^3} F(\psi)\overline{\psi} \ dx
        &=\int_{\RR^3} \left(|\psi|^2 * \frac{1}{|\bullet|}\right)(y) \partial_t |\psi(y,t)|^2\ dy+\int_{\RR^3} \left(|\psi|^2 * \frac{1}{|\bullet|}\right)(x) \partial_t |\psi(x,t)|^2 \ dx\\
        &= 4\Re \int_{\RR^3} \left(|\psi|^2 * \frac{1}{|\bullet|}\right)(x) \psi(x,t)\partial_t \overline{\psi(x,t)} \ dx.
    \end{split}
\end{equation*}

From this we conclude that
\begin{equation*}
    \begin{split}
        2\frac{d}{dt}\int_{\mathbb{R}^3} |\nabla\psi(x)|^2 \ dx  +2\int_{\mathbb{R}^3}(W_{\operatorname{sing}}+W_{\operatorname{reg}}+u(t)V_{\operatorname{con}})\partial_t(|\psi|^2) \ dx  + \frac{d}{dt}\int_{\mathbb{R}^3} F(\psi)\overline{\psi} \ dx=0.
    \end{split}
\end{equation*}

This in turn leads to 
\begin{equation*}
    \begin{split}
        \frac{d}{dt}\int_{\mathbb{R}^3}\left( |\nabla\psi|^2 +(W_{\operatorname{sing}}+W_{\operatorname{reg}}+u(t)V_{\operatorname{con}})|\psi|^2 + \frac{1}{2} F(\psi)\overline{\psi}\right) \ dx  &= \int_{\mathbb{R}^3} u'(t) V_{\operatorname{con}} |\psi|^2 \ dx \\
        &\leq  \left\Vert \frac{ u'(t)V_{\operatorname{con}}}{1+|\bullet|^2}\right\rVert_{\infty}||\psi||_{H_1}^2.
    \end{split}
\end{equation*}

Now by taking \eqref{eq:HF} and multiplying it by $(1+|x|^2)\overline{\psi}$, we can take the imaginary part to get
\begin{equation*}
    \begin{split}
        \Im i\partial_t\psi (1+|x|^2)\overline{\psi}&=-\Im(1+|x|^2)\overline{\psi}\Delta\psi\text{ and }
        (1+|x|^2)\partial_t(|\psi|^2) =  -\Im(1+|x|^2)\overline{\psi}\Delta\psi.
    \end{split}
\end{equation*}
Integrating over $\mathbb{R}^3$, we thus get
\begin{equation*}
    \begin{split}
        \frac{d}{dt}\int_{\mathbb{R}^3}(1+|x|^2)|\psi(x)|^2 \ dx &\lesssim \int_{\mathbb{R}^3}|\nabla\psi(x)|^2 \ dx +\int_{\mathbb{R}^3}|x|^2|\psi(x)|^2 \ dx.
    \end{split}
\end{equation*}

So now we can define $E$ at time $t\in[0,T]$ by
\begin{equation*}
    \begin{split}
        E(t) := \int_{\mathbb{R}^3}|\nabla \psi(x,t)|^2 + \lambda(1+|x|^2)|\psi(x,t)|^2+\frac{1}{2}\left(|\psi(t)|^2*\frac{1}{|\bullet|}\right)(x)|\psi(x,t)|^2 \ dx.
    \end{split}
\end{equation*}

Here, $\lambda>0$ is a constant to be determined later in such a way to allow $E(t)$ to be bounded
\begin{equation*}
    \begin{split}
        E'(t)&\leq  \left\lVert \frac{u'(t)V_{\operatorname{con}}}{1+|\bullet|^2}\right\rVert_{\infty}||\psi||_{H_1}^2+\frac{d}{dt}\int_{\mathbb{R}^3}(W_{\operatorname{sing}}+W_{\operatorname{reg}}+u(t)V_{\operatorname{con}})(x)|\psi(x,t)|^2 \ dx\\
        &\quad +\lambda C \left(\int_{\mathbb{R}^3}|\nabla\psi(x,t)|^2 \ dx+\int_{\mathbb{R}^3}|x|^2|\psi(x,t)|^2 \ dx\right) \\
        &\leq \left( \left\lVert \frac{u'(t) V_{\operatorname{con}}}{1+|\bullet|^2}\right\rVert_{\infty}+\lambda C\right)E(t)+\frac{d}{dt}\int_{\mathbb{R}^3}(W_{\operatorname{sing}}+W_{\operatorname{reg}}+u(t)V_{\operatorname{con}})(x)|\psi(x,t)|^2 \ dx.
    \end{split}
\end{equation*}

Integrating over $(0,t)$ we get
\begin{equation}
    \begin{split}
    \label{eq:energy}
        E(t) &\leq \int_0^t \left( \left\lVert \frac{u'(s)V_{\operatorname{con}}}{1+|\bullet|^2}\right\rVert_{\infty}+\lambda C\right)E(s) \ ds\\
        &+\int_{\mathbb{R}^3}(W_{\operatorname{sing}}(x)+W_{\operatorname{reg}}(x)+ u(t)V_{\operatorname{con}}(x)|\psi(x,t)|^2-u(0)V_{\operatorname{con}}(x)|\psi(x,0)|^2) \ dx.
    \end{split}
\end{equation}

Now we use again the infinitesimal boundedness with respect to the negative Laplacian, together with \cite[Theorem X.18]{RS} to find that for all $\epsilon>0$
\begin{equation*}
    \begin{split}
        |\langle\psi,W_{\operatorname{sing}}\psi\rangle_{2}|&\leq \epsilon |\langle\psi,\Delta\psi\rangle_{2}|+C_\epsilon\langle\psi,\psi\rangle_{2}=\epsilon ||\nabla\psi||_{2}^2+C_\epsilon ||\psi||_{2}^2.
    \end{split}
\end{equation*}

From this and \eqref{eq:energy}, we obtain the estimate
\begin{equation*}
    \begin{split}
        E(t) & \leq \int_0^t \left( \left\lVert\frac{ u'(s)V_{\operatorname{con}}}{1+|\bullet|^2}\right\rVert_{\infty}+\lambda C\right)E(s) \ ds+\epsilon ||\nabla \psi(0)||_{2}^2+C_\epsilon||\psi(0)||_{2}^2\\
        &+\left\lVert\frac{W_{\operatorname{reg}}}{1+|\bullet|^2}\right\rVert_{L^\infty}||\psi(0)||_{H_1}^2+\left\lVert\frac{u(0)V_{\operatorname{con}}}{1+|\bullet|^2}\right\rVert_{L^\infty}||\psi(0)||_{H_1}^2+\epsilon ||\nabla \psi(t)||_{2}^2\\
        &+C_\epsilon||\psi(t)||_{2}^2+\left\lVert\frac{W_{\operatorname{reg}}}{1+|\bullet|^2}\right\rVert_{L^\infty}||\psi(t)||_{H_1}^2+\left\Vert\frac{u(t)V_{\operatorname{con}}}{1+|\bullet|^2}\right\rVert_{L^\infty}||\psi(t)||_{H_1}^2.
    \end{split}
\end{equation*}

Taking $\epsilon=\frac{1}{2}$ we find by the preservation of the $L^2$ norm of $\psi$, and since $u\in W_{\operatorname{pcw}}^{1,1} [0,T]$ that
\begin{equation}
    \begin{split}
    \label{eq:Et}
        E(t) & \leq C + \int_0^t \left( \left\lVert \frac{u'(s)V_{\operatorname{con}}}{1+|\bullet|^2}\right\rVert_{\infty}+\lambda C\right)E(s) \ ds+\frac{||\nabla \psi(t)||_{2}^2}{2} +C'||\psi(t)||_{H_1}^2.
    \end{split}
\end{equation}

Noting that $E(0)\lesssim||\psi_0||^2_{H^1\cap H_1}+||\psi_0||_{H^1}||\psi_0||_{2}^3,$ we define $G$ for times $t\in [0,T]$ by
\begin{equation*}
    \begin{split}
        G(t)&:=\int_{\mathbb{R}^3}|\nabla\psi(t,x)|^2dx+\int_{\mathbb{R}^3}(1+|x|^2)|\psi(t,x)|^2dx+\int_{\mathbb{R}^3}\left(|\psi(t,\bullet)|^2*\frac{1}{|\bullet|}\right)(x)|\psi(t,x)|^2dx
    \end{split}
\end{equation*}

then taking $\lambda$ to be  $C'+\frac{1}{2}$ we can subtract $\frac{1}{2} ||\nabla \psi(t)||_{2}^2+C'||\psi(t)||_{H_1}^2$ from \eqref{eq:Et} to see that
\begin{equation*}
    \begin{split}
        G(t) \leq C + 2\int_0^t \left( \left\lVert \frac{u'(s)V_{\operatorname{con}}}{1+|\bullet|^2}\right\rVert_{\infty}+\lambda C\right)G(s) \ ds.
    \end{split}
\end{equation*}

Therefore, by Gronwall's inequality $G(t) \lesssim \exp \left( \int_0^t \beta(s)ds\right),$ where $\beta(t) =  \left\lVert \frac{u'(t)V_{\operatorname{con}}}{1+|\bullet|^2}\right\rVert_{\infty},$
and so there exists $C_{T,\psi_0}>0$ such that for all $t\in[0,T]$:
\begin{equation*}
    \begin{split}
        ||\psi(t)||^2_{H^1\cap H_1}+\int_{\mathbb{R}^3}\left(|\psi(x,t)|^2*\frac{1}{|\bullet|}\right)|\psi(x,t)|^2 \ dx \le C_{T,\psi_0}.
    \end{split}
\end{equation*}

\subsection{Global Existence}

Now that we have an estimate for the energy we can use that the equation is equivalent to the integral equation
\begin{equation*}
    \begin{split}
        \psi(t) = U(t,0)\psi_0-i\int_0^t U(t,s)F(\psi(s)) \ ds.
    \end{split}
\end{equation*}

Thus, the claim follows from Gr\"onwall's inequality applied to
\begin{equation*}
    \begin{split}
        ||\psi(t)||_{H^2_2} &\lesssim ||\psi_0||_{H^2_2}+\int_0^t ||F(\psi(s))||_{H^2_2} \ ds\\
        &\lesssim  ||\psi_0||_{H^2_2}+\int_0^t ||\psi(s)||_{H^1}^2||\psi(s)||_{H^2_2}\ ds \lesssim 1+\int_0^t ||\psi(s)||_{H^2_2} \ ds.
    \end{split}
\end{equation*}
Thus, we have shown that
\begin{theo}
\label{theo:HF}
With the same notation and same assumptions as in Lemma \ref{lemm:H11}
there exists a constant $C_{T}$ such that for any $\varphi_0 \in H^2_2$, equation \eqref{eq:HF}
\begin{equation*}\begin{split}
    i \partial _t \psi(x,t) &= ( -i\nabla -A)^2 \psi(x,t) + V \psi(x,t)  \text{ with }\psi(\bullet,0) = \varphi_0
\end{split}\end{equation*}
has a unique solution $\psi \in L^\infty(0,T;H^2_2)$ satisfying $
    \left \lVert \psi \right \rVert _{L^\infty(0,T;H^2_2)} \leq C_{T,\alpha,\rho} \lVert \varphi_0 \rVert _{H^2_2}.$
\end{theo}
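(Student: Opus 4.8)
The plan is to run the standard three-step programme for semilinear Schr\"odinger equations — local well-posedness by a fixed point, an a priori energy bound, and a continuation/bootstrap step — assembling the ingredients prepared above.

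\emph{Local existence.} First I would use the Duhamel formulation $\psi(t)=U(t,0)\varphi_0-i\int_0^tU(t,s)F(\psi(s))\,ds$, where $\{U(t,s)\}$ is the evolution family of the linear part $H(t)$ furnished by Theorem \ref{theo:H2ExistBoundedMag} and Lemma \ref{lemm:aux} (here $\Omega=\RR^3$), satisfying $\|U(t,s)\|_{\mathcal L(H^2_2)}\le M$. On the ball $B=\{\psi\in L^\infty(0,\tau;H^2_2):\|\psi\|_{L^\infty(0,\tau;H^2_2)}\le 2M\|\varphi_0\|_{H^2_2}\}$ the map $\Lambda(\psi)(t):=U(t,0)\varphi_0-i\int_0^tU(t,s)F(\psi(s))\,ds$ is a contraction for $\tau$ small: Lemma \ref{EuanRulesForF} gives $\|F(\psi)\|_{H^2_2}\le C_F\|\psi\|_{H^1}^2\|\psi\|_{H^2_2}$ and $\|F(\psi)-F(\phi)\|_{H^2_2}\le C_F(\|\psi\|_{H^1}^2+\|\phi\|_{H^2_2}^2)\|\psi-\phi\|_{H^2_2}$, so both $\Lambda(B)\subset B$ and the contraction estimate hold once $8\tau C_F M^3\|\varphi_0\|_{H^2_2}^2<1$. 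Uniqueness of the resulting $L^2$ solution follows from a Gr\"onwall argument on $\tfrac{d}{dt}\|\psi-\psi'\|_2^2$ using part (1) of Lemma \ref{EuanRulesForF}. This yields a solution on a maximal interval $[0,T_{\max})$ with the blow-up alternative that $T_{\max}<T$ forces $\limsup_{t\uparrow T_{\max}}\|\psi(t)\|_{H^2_2}=\infty$; hence it suffices to produce an a priori bound on $\|\psi(t)\|_{H^2_2}$ over $[0,T_{\max})$.

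\emph{Energy estimate.} Next I would control $\|\psi(t)\|_{H^1\cap H_1}$. Multiplying the equation by $\partial_t\overline\psi$, integrating and taking the real part gives a differential identity for $\int(|\nabla\psi|^2+(W_{\operatorname{sing}}+W_{\operatorname{reg}}+u(t)V_{\operatorname{con}})|\psi|^2+\tfrac12 F(\psi)\overline\psi)$ whose only forcing term is $\int u'(t)V_{\operatorname{con}}|\psi|^2\le|u'(t)|\,\|\langle\bullet\rangle^{-2}V_{\operatorname{con}}\|_\infty\|\psi\|_{H_1}^2$, using the symmetrization identity $\tfrac{d}{dt}\int F(\psi)\overline\psi=4\Re\int(|\psi|^2*|\bullet|^{-1})\psi\,\partial_t\overline\psi$. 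Multiplying instead by $(1+|x|^2)\overline\psi$ and taking imaginary parts controls $\tfrac{d}{dt}\|\psi\|_{H_1}^2\lesssim\|\nabla\psi\|_2^2+\||x|\psi\|_2^2$. Combining the two, using the infinitesimal $-\Delta$-boundedness of $W_{\operatorname{sing}}\in L^2$ (via \cite[Remark 3.8]{BH}, \cite[Theorem X.18]{RS}) to absorb a small multiple of $\|\nabla\psi\|_2^2$, and the Assumption \ref{ass:ass1} bounds on $\langle\bullet\rangle^{-2}W_{\operatorname{reg}}$ and $\langle\bullet\rangle^{-2}V_{\operatorname{con}}$, I would obtain for a suitable $\lambda>0$ a Gr\"onwall inequality $G(t)\le C+C\int_0^t(1+|u'(s)|)G(s)\,ds$ for $G(t):=\|\nabla\psi(t)\|_2^2+\|\psi(t)\|_{H_1}^2+\int(|\psi(t)|^2*|\bullet|^{-1})|\psi(t)|^2$. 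Since $u\in W^{1,1}_{\operatorname{pcw}}(0,T)$, this gives $\sup_{t<T_{\max}}G(t)\le C_{T,\varphi_0}$, and in particular $\sup_{t<T_{\max}}\|\psi(t)\|_{H^1}\le C_{T,\varphi_0}$, with $\|\psi(t)\|_2$ conserved.

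\emph{Bootstrap and conclusion.} Finally I would insert the time-uniform bound $\|\psi(s)\|_{H^1}^2\le C_{T,\varphi_0}$ into the Duhamel identity: from $\|U(t,s)\|_{\mathcal L(H^2_2)}\le M$ and $\|F(\psi)\|_{H^2_2}\le C_F\|\psi\|_{H^1}^2\|\psi\|_{H^2_2}$ one gets $\|\psi(t)\|_{H^2_2}\le M\|\varphi_0\|_{H^2_2}+MC_FC_{T,\varphi_0}\int_0^t\|\psi(s)\|_{H^2_2}\,ds$, so Gr\"onwall yields $\|\psi(t)\|_{H^2_2}\le M e^{MC_FC_{T,\varphi_0}T}\|\varphi_0\|_{H^2_2}$ uniformly on $[0,T_{\max})$; by the blow-up alternative $T_{\max}\ge T$, and this is the claimed estimate. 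The hard part is exactly what makes this last step close: the cubic Hartree nonlinearity by itself yields only a local-in-time $H^2_2$ bound, and it is the energy estimate — exploiting the near-conservation of the interaction functional $\int F(\psi)\overline\psi$ up to the $u'$-term — that supplies the time-uniform $H^1$ control needed to linearize the $H^2_2$ Gr\"onwall estimate. A persistent technical nuisance is the merely $L^2$ singular potential $W_{\operatorname{sing}}$, which must be handled throughout via its infinitesimal relative boundedness with respect to $-\Delta$ (and, in the $H^2_2$ step, via the consequent bound $\|W_{\operatorname{sing}}\psi\|_2\lesssim\|\psi\|_{H^2_2}$) rather than any pointwise estimate.
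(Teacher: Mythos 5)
Your proposal is correct and follows essentially the same route as the paper: local existence by a contraction on the Duhamel formulation with the evolution family from Theorem \ref{theo:H2ExistBoundedMag} and Lemma \ref{EuanRulesForF}, an $H^1\cap H_1$ energy estimate (multiplying by $\partial_t\overline\psi$ and by $(1+|x|^2)\overline\psi$, handling $W_{\operatorname{sing}}$ by infinitesimal $-\Delta$-boundedness, then Gr\"onwall), and a final Duhamel--Gr\"onwall bootstrap in $H^2_2$ to globalize. The only cosmetic difference is that you phrase the continuation step via an explicit blow-up alternative, where the paper simply extends the local solution using the a priori bound.
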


\subsection{Existence and uniqueness of a solution on a ball}
We now look on a bounded open domain with piecewise smooth boundary $\Omega \subset \RR^3.$ 

To approximate the global dynamics on a bounded domain, we consider now the operator $H_0=-\Delta+V: D(H_0) \subset L^2(\Omega) \rightarrow L^2(\Omega)$ and take potentials as before, but now restricted to the domain $\Omega$ with Dirichlet boundary condition. We then study the equation 
\begin{equation}
\begin{split}
\label{eq:HF2}
i \partial_t u(x,t) &= (H_0 + V_{\operatorname{TD}}(t))u(x,t)+\int_{\Omega} \frac{\vert u(y,t) \vert^2 }{\vert x-y \vert} \ dy \ u(x,t), \quad (x,t) \in \Omega \times (0,T) \\
u(x,t)&=0 \quad (x,t) \in \partial \Omega \times (0,T)\text{ such that }
u(\bullet,0)=u_0 \in H^2(\Omega) \cap H^1_0(\Omega).
\end{split}
\end{equation}
It is easy to see that Theorem \ref{theo:HF} holds true with the same constant $C_{T,\alpha,\rho}$ for the Dirichlet problem.

\subsection{Reduction to bounded domains}
Now we need to show that the solution on the bounded domain tends in $L^\infty(0,T;L^2(\RR^3))$ to the solution on $\RR^3$ as the radius of the ball tends to $\infty$. So first we prove the equivalent of \cite[Lemma 7.1]{BH} for our version of the Schr\"odinger equation.

\begin{lemm}
\label{lm:PertubationOfSingularPotentialsAndInitialStates}
(Perturbation of singular potentials \& initial states). Let $\widetilde{W}_{\operatorname{sing}},W_{\operatorname{sing}},W_{\operatorname{reg}},V_{\operatorname{con}},u$ satisfy the assumptions in Assumption \ref{ass:ass1}, and $\widetilde{\psi_0},\psi_0\in H_2^2(\RR^3)$ be two initial states. Then the solution to
\begin{equation*}
    \begin{split}
        i \partial_t \widetilde{\psi}(x,t) &= (-\Delta + \widetilde{W}_{\operatorname{sing}} + \widetilde{V}_{\operatorname{TD}}(t))\widetilde{\psi}(x,t)+\left(\vert \widetilde{\psi}(\bullet,t) \vert^2*\frac{1}{\vert \bullet \vert} \right)(x)\widetilde{\psi}(x,t)\\
        \widetilde{\psi}(\bullet,0)&=\widetilde{\psi_0}.
    \end{split}
\end{equation*}

converges in $L^\infty (0,T;L^2(\RR^3))$ to the solution of
\begin{equation*}
    \begin{split}
        i \partial_t {\psi}(x,t) &= (-\Delta + {W_{\operatorname{sing}}} + V_{\operatorname{TD}}(t)){\psi}(x,t)+\left(\vert {\psi}(\bullet,t) \vert^2*\frac{1}{\vert \bullet \vert} \right)(x){\psi}(x,t)\\
        {\psi}(\bullet,0)&=\psi_0
    \end{split}
\end{equation*}

as $\widetilde{W}_{\operatorname{sing}} \to_{2} W_{\operatorname{sing}}$, $\widetilde{V}_{\operatorname{con}}\to_{2} V_{\operatorname{con}}$, and $\widetilde{\psi_0}\to_{2}\psi_0$. Furthermore there is
\begin{equation*}
    \begin{split}
        C = C(T,||\psi||_{L^\infty((0,T),H^2(\RR^d))},||\widetilde{\psi}||_{L^\infty((0,T),H^2(\RR^d))})
    \end{split}
\end{equation*}
such that
\begin{equation*}
    \begin{split}
        ||\psi-\widetilde{\psi}||_{L^\infty (0,T;L^2(\RR^3))} \le C\left(||W_{\operatorname{sing}}-\widetilde{W}_{\operatorname{sing}}||_{L^2(\RR^3)}+||V_{\operatorname{con}}-\widetilde{V}_{\operatorname{con}}||_{2}+||\psi_0-\widetilde{\psi_0}||_{L^2(\RR^3)}\right).
    \end{split}
\end{equation*}
\end{lemm}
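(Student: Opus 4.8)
The plan is to run a Gr\"onwall estimate on $\lVert\xi(t)\rVert_{2}^2$ for the difference $\xi:=\psi-\widetilde\psi$, in the same spirit as the uniqueness argument in Lemma \ref{EuanExistenceLocalSoln}. As a preliminary, since $\widetilde W_{\operatorname{sing}}$ and $\widetilde V_{\operatorname{con}}$ also satisfy Assumption \ref{ass:ass1}, Theorem \ref{theo:HF} furnishes both global solutions $\psi,\widetilde\psi\in L^\infty(0,T;H^2_2(\RR^3))$, with $\partial_t\psi,\partial_t\widetilde\psi$ and $\Delta\psi,\Delta\widetilde\psi$ in $L^\infty(0,T;L^2)$ as recorded in the proof of Lemma \ref{EuanExistenceLocalSoln}; in particular $H^2(\RR^3)\hookrightarrow L^\infty(\RR^3)$ gives $\lVert\widetilde\psi(t)\rVert_{L^\infty}\lesssim\lVert\widetilde\psi\rVert_{L^\infty(0,T;H^2)}$ uniformly in $t$. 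This regularity is what legitimises the formal manipulations below.

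First I would subtract the two Hartree equations and split each perturbed potential product as $W_{\operatorname{sing}}\psi-\widetilde W_{\operatorname{sing}}\widetilde\psi=W_{\operatorname{sing}}\xi+(W_{\operatorname{sing}}-\widetilde W_{\operatorname{sing}})\widetilde\psi$ and $u(t)(V_{\operatorname{con}}\psi-\widetilde V_{\operatorname{con}}\widetilde\psi)=u(t)V_{\operatorname{con}}\xi+u(t)(V_{\operatorname{con}}-\widetilde V_{\operatorname{con}})\widetilde\psi$, which gives
\begin{equation*}
i\partial_t\xi=-\Delta\xi+\big(W_{\operatorname{reg}}+W_{\operatorname{sing}}+u(t)V_{\operatorname{con}}\big)\xi+(W_{\operatorname{sing}}-\widetilde W_{\operatorname{sing}})\widetilde\psi+u(t)(V_{\operatorname{con}}-\widetilde V_{\operatorname{con}})\widetilde\psi+F(\psi)-F(\widetilde\psi).
\end{equation*}
Multiplying by $\overline\xi$, integrating over $\RR^3$, and taking the imaginary part, I use $\Im(i\overline\xi\partial_t\xi)=\Re(\overline\xi\partial_t\xi)=\tfrac{1}{2}\partial_t|\xi|^2$; the term $-\Delta\xi$ contributes $\Im\int_{\RR^3}|\nabla\xi|^2=0$ after integration by parts (no boundary terms on $\RR^3$); and each of the \emph{shared, real} potentials $W_{\operatorname{reg}},W_{\operatorname{sing}},u(t)V_{\operatorname{con}}$ contributes a real multiple of $\int_{\RR^3}|\xi|^2$, which is finite since $|W_{\operatorname{reg}}|\lesssim\langle\bullet\rangle^2$ with $\xi\in H_2$ and $\int_{\RR^3}|W_{\operatorname{sing}}|\,|\xi|^2\le\lVert W_{\operatorname{sing}}\rVert_{2}\lVert\xi\rVert_{4}^2<\infty$, and therefore drops out upon taking $\Im$. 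This leaves
\begin{equation*}
\tfrac{1}{2}\tfrac{d}{dt}\lVert\xi(t)\rVert_{2}^2=\Im\int_{\RR^3}\Big((W_{\operatorname{sing}}-\widetilde W_{\operatorname{sing}})\widetilde\psi+u(t)(V_{\operatorname{con}}-\widetilde V_{\operatorname{con}})\widetilde\psi+F(\psi)-F(\widetilde\psi)\Big)\overline{\xi}\,dx.
\end{equation*}

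Next I would bound the right-hand side by Cauchy--Schwarz: the first two terms by $\big(\lVert W_{\operatorname{sing}}-\widetilde W_{\operatorname{sing}}\rVert_{2}+|u(t)|\,\lVert V_{\operatorname{con}}-\widetilde V_{\operatorname{con}}\rVert_{2}\big)\lVert\widetilde\psi(t)\rVert_{L^\infty}\lVert\xi(t)\rVert_{2}$, and the Hartree difference, by Lemma \ref{EuanRulesForF}(1), by $c\big(\lVert\psi(t)\rVert_{H^1}^2+\lVert\widetilde\psi(t)\rVert_{H^1}^2\big)\lVert\xi(t)\rVert_{2}^2$. Writing $\delta:=\lVert W_{\operatorname{sing}}-\widetilde W_{\operatorname{sing}}\rVert_{2}+\lVert V_{\operatorname{con}}-\widetilde V_{\operatorname{con}}\rVert_{2}$, bounding the $H^1$-norms and $\lVert\widetilde\psi(t)\rVert_{L^\infty}$ by $\lVert\psi\rVert_{L^\infty(0,T;H^2)}$ and $\lVert\widetilde\psi\rVert_{L^\infty(0,T;H^2)}$, and applying Young's inequality to the cross term, I obtain a differential inequality
\begin{equation*}
\tfrac{d}{dt}\lVert\xi(t)\rVert_{2}^2\le\big(C_1+|u(t)|\big)\lVert\xi(t)\rVert_{2}^2+C_2\big(1+|u(t)|\big)\delta^2,
\end{equation*}
where $C_1,C_2$ depend only on $T$, $\lVert\psi\rVert_{L^\infty(0,T;H^2)}$ and $\lVert\widetilde\psi\rVert_{L^\infty(0,T;H^2)}$. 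Since $u\in W^{1,1}(0,T)$, the function $t\mapsto C_1+|u(t)|$ is integrable, so Gr\"onwall's inequality gives, uniformly in $t\in[0,T]$,
\begin{equation*}
\lVert\xi(t)\rVert_{2}^2\le e^{C_1T+\lVert u\rVert_{L^1(0,T)}}\Big(\lVert\psi_0-\widetilde\psi_0\rVert_{2}^2+C_2\big(T+\lVert u\rVert_{L^1(0,T)}\big)\delta^2\Big).
\end{equation*}
Taking the supremum over $t$ and a square root yields the asserted estimate, and letting $\delta$ and $\lVert\psi_0-\widetilde\psi_0\rVert_{2}$ tend to zero gives the claimed convergence in $L^\infty(0,T;L^2(\RR^3))$. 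I expect the only genuinely delicate point to be the regularity bookkeeping of the first paragraph — checking that both global solutions are regular enough ($\partial_t\xi,\Delta\xi\in L^\infty(0,T;L^2)$ and $\widetilde\psi\in L^\infty((0,T)\times\RR^3)$) for the energy identity and the integrations by parts to be valid; once Theorem \ref{theo:HF} and Lemma \ref{EuanExistenceLocalSoln} supply this, the remainder is a routine Gr\"onwall argument of the type used repeatedly above.
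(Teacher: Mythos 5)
Your proposal is correct and follows essentially the same route as the paper's proof: subtract the two equations, multiply by $\overline{\xi}$, take the imaginary part so the Laplacian and the shared real potentials drop out, bound the singular-potential difference via $H^2\hookrightarrow L^\infty$ and the Hartree difference via Lemma \ref{EuanRulesForF}, then conclude with Gr\"onwall. The only cosmetic differences are that you attach the potential differences to $\widetilde\psi$ rather than $\psi$ and carry the control-potential term explicitly (the paper defers it with ``included in a similar way''), which if anything makes the write-up slightly more complete.
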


\begin{proof}
 Let $\xi = \psi-\widetilde{\psi}$ and subtract the two equations to get
\begin{equation*}
    \begin{split}
        i\partial_t \xi = \left(-\Delta +W_{\operatorname{reg}} + \widetilde{W}_{\operatorname{sing}} +V_{\operatorname{TD}}(t) \right)\xi+(W_{\operatorname{sing}}-\widetilde{W}_{\operatorname{sing}})\psi+(F(\psi)-F(\widetilde{\psi}))
    \end{split}
\end{equation*}

with initial condition $\xi(0) = \psi_0-\widetilde{\psi_0}$. Multiply by $\overline{\xi}$, integrate over $\RR^3$ and take the imaginary part to get
\begin{equation*}
    \begin{split}
        \frac{1}{2}\frac{d}{dt}\int_{\RR^3}|\xi|^2 &= \Im\int_{\RR^3}\left((W_{\operatorname{sing}}-\widetilde{W}_{\operatorname{sing}})\psi+F(\psi)-F(\widetilde{\psi})\right)\overline{\xi} \\
        &\leq ||\xi||_{L^2(\RR^3)}\left(\left\lVert (W_{\operatorname{sing}}-\widetilde{W}_{\operatorname{sing}})\psi\right\rVert_{L^2(\RR^3)} + \left\lVert F(\psi)-F(\widetilde{\psi})\right\rVert_{L^2(\RR^3)}\right).
    \end{split}
\end{equation*}

Now note that
\begin{equation*}
    \begin{split}
        \left\lVert (W_{\operatorname{sing}}-\widetilde{W}_{\operatorname{sing}})\psi \right\rVert_{2} \lVert\xi\rVert_{2}  &\leq \left\lVert W_{\operatorname{sing}}-\widetilde{W}_{\operatorname{sing}} \right\rVert_{2}^2 \lVert \psi \rVert ^2_{H^2}/2+ \lVert\xi\rVert_{2}^2/2.
    \end{split}
\end{equation*}

Integrating in time and using that $\psi\in L^\infty((0,T),H^2(\RR^3))$, Lemma \ref{EuanRulesForF} yields
\begin{equation*}
    \begin{split}
        ||\xi(t)||^2_{L^2(\RR^3)}
        &\leq ||\xi(0)||^2_{L^2(\RR^3)}+\left\lVert W_{\operatorname{sing}}-\widetilde{W}_{\operatorname{sing}}\right\rVert^2_{2}\int_0^t ||\psi(s)||^2_{H^2}ds+\int_0^t C||\xi(s)||_{2}^2ds\\
        &\leq  ||\xi(0)||^2_{L^2(\RR^3)}+Ct\left\lVert W_{\operatorname{sing}}-\widetilde{W}_{\operatorname{sing}}\right\rVert^2_{2}+\int_0^t C||\xi(s)||_{2}^2ds
    \end{split}
\end{equation*}

where the constants depend on $||\psi||_{L^\infty((0,T),H^2(\RR^d))},||\widetilde{\psi}||_{L^\infty((0,T),H^2)} $.

Now Gr\"onwall's inequality implies that
\begin{equation*}
    \begin{split}
        ||\xi||_{L^\infty((0,T),L^2(\RR^3))}\lesssim  \left\lVert W_{\operatorname{sing}}-\widetilde{W}_{\operatorname{sing}}\right\rVert^2_{2}+||\psi_0-\widetilde{\psi_0}||_{L^2(\RR^3)}.
    \end{split}
\end{equation*}
The control potential can be included in a similar way.
\end{proof}

Since $u \in W^{1,1}_{\operatorname{pcw}}$ it suffices to treat from now on the case that $u$ is constant, since $u$ can be approximated by piecewise constant functions.

Now, we are ready to prove the reduction to a boundary value problem on a bounded domain.

\begin{theo}
\label{theo:auxBVP}
Let $\psi_0\in H^2_2(B_R(0))\cap H^1_0(B_R(0))$ be an initial state, $T\in (0,\infty)$ and  $V,V_{\operatorname{con}},u$ as in Assumption \ref{ass:ass1}. Then the equation with solution $\psi$ on $\mathbb R^3$ can be approximated by $\psi^{D}$, the solution to the Dirichlet boundary value problem on $B_R(0)$ for $R>0$. In particular, the difference $\xi = \psi-\psi^{D}$ satisfies, for some constant 
\[C = C(T, ||u||_{W^{1,1}_{\operatorname{pcw}}(0,T)},||W_{\operatorname{sing}}||_{2},||\langle\bullet\rangle^{-2}W_{\operatorname{reg}}||_{L^\infty},||\langle\bullet\rangle^{-2}V_{\operatorname{con}}||_{L^\infty})\]
we have
\begin{equation*}
    \begin{split}
        ||\xi||^2_{L^\infty((0,T),L^2(\RR^3))}\le C \left|\sup_{t\in(0,T)} \int_{\partial B_R(0)}\xi\nabla \bar \xi \ dS \right|  \lesssim \frac{||\psi_0||_{H^2_2(\RR^3)}}{R^2}.
    \end{split}
\end{equation*}
\end{theo}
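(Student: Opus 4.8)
The plan is to follow the structure of the proof of Theorem~\ref{theo:Dirichlet}, adapting it from the magnetic linear equation to the (non-magnetic) Hartree equation and paying for the nonlocal interaction with one extra estimate. Write $\Omega=B_R(0)$ and $\xi=\psi-\psi^{D}$, extended by zero outside $\Omega$; by hypothesis the two solutions start from the same initial state $\psi_0$, so $\xi(\bullet,0)=0$, and since $u\in W^{1,1}_{\operatorname{pcw}}(0,T)$ may be approximated by piecewise constant controls (as already noted) we may assume $u$ constant. On $\Omega$ the difference satisfies
\begin{equation*}
i\partial_t\xi=-\Delta\xi+(W_{\operatorname{sing}}+W_{\operatorname{reg}}+uV_{\operatorname{con}})\xi+\bigl(F(\psi)-F_{\Omega}(\psi^{D})\bigr).
\end{equation*}
Multiplying by $\overline{\xi}$, integrating over $\Omega$ and taking imaginary parts, the real potential terms cancel and integration by parts on the Laplacian produces the boundary flux, so that
\begin{equation*}
\tfrac12\frac{d}{dt}\lVert\xi\rVert_{L^2(\Omega)}^2=-\Im\int_{\partial\Omega}\overline{\xi}\,\nabla_n\xi\ dS+\Im\int_{\Omega}\bigl(F(\psi)-F_{\Omega}(\psi^{D})\bigr)\overline{\xi}\ dx.
\end{equation*}

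For the nonlinear term I would write $F(\psi)|_{\Omega}-F_{\Omega}(\psi^{D})=\bigl(F(\psi)|_{\Omega}-F_{\Omega}(\psi|_{\Omega})\bigr)+\bigl(F_{\Omega}(\psi|_{\Omega})-F_{\Omega}(\psi^{D})\bigr)$. The second difference is controlled by Lemma~\ref{EuanRulesForF}(1): its $L^2(\Omega)$ norm is at most $c(\lVert\psi\rVert_{H^1}^2+\lVert\psi^{D}\rVert_{H^1}^2)\lVert\xi\rVert_{L^2(\Omega)}$, and by Theorem~\ref{theo:HF} together with its Dirichlet version (which holds with the same constant) the $H^1$ norms are bounded uniformly in $t$ and $R$ by a multiple of $\lVert\psi_0\rVert_{H^2_2}$, so after pairing with $\overline{\xi}$ this term is $\lesssim\lVert\xi\rVert_{L^2(\Omega)}^2$. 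The first difference equals $\bigl(\int_{\Omega^c}|\psi(y,t)|^2/|x-y|\ dy\bigr)\psi(x,t)$ for $x\in\Omega$, and the point is that its $L^\infty$-norm is $O(R^{-2})$: splitting the Coulomb kernel as $|z|^{-1}\indic_{|z|\le1}+|z|^{-1}\indic_{|z|>1}$, one estimates the singular part by Young's inequality ($L^2\ast L^2\to L^\infty$, using $|z|^{-1}\indic_{|z|\le1}\in L^2(\RR^3)$) and the tail by $L^\infty\ast L^1\to L^\infty$; combined with the Sobolev embedding $H^2(\RR^3)\hookrightarrow L^\infty$, the uniform bound $\lVert\psi(\bullet,t)\rVert_{H^2_2}\lesssim\lVert\psi_0\rVert_{H^2_2}$ from Theorem~\ref{theo:HF}, and Remark~\ref{lm:InequalityForNormOutsideBall} with $\eta=2$ (which gives $\lVert\psi(\bullet,t)\indic_{\Omega^c}\rVert_{2}\lesssim\lVert\psi_0\rVert_{H^2_2}R^{-2}$), this yields $\bigl\lVert\int_{\Omega^c}|\psi(\bullet,t)|^2/|\cdot-y|\ dy\bigr\rVert_{L^\infty}\lesssim R^{-2}$. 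Pairing with $\psi\overline{\xi}$ and using $\lVert\psi\rVert_{2}\lVert\xi\rVert_{2}$, the first difference contributes $\lesssim R^{-2}\lVert\xi\rVert_{L^2(\Omega)}$ to the energy identity.

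For the boundary flux I would apply Lemma~\ref{lemm:decay} to $f=\xi(\bullet,t)$ on $B_R(0)$, exactly as in the proof of Theorem~\ref{theo:Dirichlet}, using that $\psi^{D}$ vanishes on $\partial B_R(0)$ and that $\lVert\xi(\bullet,t)\rVert_{H^2_2}\lesssim\lVert\psi_0\rVert_{H^2_2}$ uniformly in $t$ and $R$, to get $\bigl|\int_{\partial B_R(0)}\overline{\xi}\,\nabla_n\xi\ dS\bigr|\lesssim\lVert\psi_0\rVert_{H^2_2}^2/R^2$. Writing $y(t)=\lVert\xi(\bullet,t)\rVert_{L^2(\Omega)}$, collecting the three estimates and using $R^{-2}y\le\tfrac12 y^2+\tfrac12 R^{-4}$ gives
\begin{equation*}
y(t)^2\lesssim\int_0^t y(s)^2\ ds+T\,R^{-4}+T\sup_{s\in(0,T)}\Bigl|\int_{\partial B_R(0)}\overline{\xi}\,\nabla_n\xi\ dS\Bigr|,
\end{equation*}
and since $y(0)=0$ Gronwall's inequality yields $\sup_t y(t)^2\lesssim\sup_t\bigl|\int_{\partial B_R(0)}\overline{\xi}\,\nabla_n\xi\ dS\bigr|+R^{-4}\lesssim\lVert\psi_0\rVert_{H^2_2}/R^2$. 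Finally, on $\Omega^c$ one has $\xi=\psi$, so $\lVert\xi\rVert_{L^2(\RR^3)}^2=y^2+\lVert\psi\indic_{\Omega^c}\rVert_{2}^2\le y^2+C R^{-4}\lVert\psi\rVert_{H^2_2}^2$, which is still $\lesssim\lVert\psi_0\rVert_{H^2_2}/R^2$; the constant has the stated form, being inherited from Theorem~\ref{theo:HF} and Lemma~\ref{EuanRulesForF}.

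The step I expect to be the main obstacle is the nonlocal Coulomb tail $\bigl(\int_{\Omega^c}|\psi(y)|^2/|x-y|\ dy\bigr)\psi(x)$: unlike a local potential one cannot simply discard what happens outside $B_R(0)$, and the kernel $|x-y|^{-1}$ stays singular for $x$ near $\partial B_R(0)$ and $y$ just outside, so the decay in $R$ has to be extracted by combining the weighted $L^2$-smallness of $\psi$ on $\Omega^c$ with the kernel splitting above; once this is in place, the energy identity, Lemma~\ref{lemm:decay} and Gronwall's inequality go through essentially as in the linear case.
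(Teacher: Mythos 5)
Your proof is correct and follows essentially the same route as the paper: write the difference equation for $\xi=\psi-\psi^D$ with $\xi(0)=0$, do the $L^2$-energy estimate with the boundary flux controlled by Lemma \ref{lemm:decay} and the uniform $H^2_2$ bounds from Theorem \ref{theo:HF} (and its Dirichlet analogue), and conclude by Gronwall, adding back the exterior piece $\psi\indic_{B_R(0)^c}$ via the weighted decay. The only deviation is that you explicitly split off the exterior Coulomb tail $\bigl(\int_{B_R(0)^c}|\psi(y)|^2|x-y|^{-1}dy\bigr)\psi$ and bound it by kernel splitting plus Remark \ref{lm:InequalityForNormOutsideBall}, whereas the paper absorbs the entire nonlinear difference in one step through the $\Omega$-independent Lipschitz estimate of Lemma \ref{EuanRulesForF} (with $\psi^D$ extended by zero); your treatment simply makes that step explicit.
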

\begin{proof}
First we use Lemma \ref{lm:PertubationOfSingularPotentialsAndInitialStates} to reduce the initial state to an initial state with support in $B_R(0)$. So we separate the solution $\psi$ into its behaviour outside the ball $B_R(0)$, where the solution to the boundary value problem is zero, and it's behaviour inside $B_R(0)$.

To control the difference inside the ball, take the difference $\xi=\psi-\psi^{D}$ such that 
\begin{equation*}
    \begin{split}
        i\partial_t\xi(t) &= (-\Delta+V+V_{\operatorname{TD}}(t))\xi(t)+F(\psi(t))-F(\psi^{D}(t))\text{ with }\xi(0) = 0.
    \end{split}
\end{equation*}

Multiplying by $\overline{\xi}$, integrate over $B_R(0)$, and taking the imaginary part yields
\begin{equation*}
    \begin{split}
        \frac{1}{2}\frac{d}{dt}&\int_{B_R(0)}|\xi(x,t)|^2dx = \Im\left(\int_{B_R(0)}-\overline{\xi}\Delta\xi + (F(\psi)-F(\psi^{D}))\overline{\xi} \ dx\right)\\
        &\leq \left| \int_{\partial B_R(0)}\overline{\xi}\nabla\xi\ dS \right|+\int_{B_R(0)}|F(\psi)-F(\psi^{D})||\overline{\xi}| \ dx\\
        &\lesssim \left| \int_{\partial B_R(0)}\overline{\xi}\nabla\xi\ dS \right|+ ||\xi||_{L^2(B_R(0))}^2.
    \end{split}
\end{equation*}

Gr\"onwall's inequality implies then that $||\xi(t)||^2_{L^2(B_R(0))}\lesssim C\left| \int_{\partial B_R(0)}\overline{\xi}\nabla\xi \ dS \right|.$ Combining the two bounds above with Lemma \ref{lemm:decay} yields then the bound of this Lemma.
\end{proof}
\subsection{Approximation of Coulomb kernel}

Next we aim to replace the Coulomb kernel, $1/|x|,$ with $f\in C^\infty(\Omega) \cap L^2(\Omega)$ as we need smooth functions in order to compute the solution. 
It is easy to see that Theorem \ref{theo:HF} holds true for $1/|x|$ replaced by $f.$
\begin{lemm}
Let $\psi$ be the solution to the Hartree equation and $\widetilde{\psi}$ be the solution with $1/\vert x \vert$ replaced by $f \in C^{\infty}(\Omega) \cap L^2(\Omega)$.

The difference of the two solutions $\xi_:=\widetilde{\psi}-\psi$ then satisfies the PDE
\begin{equation}
    \begin{split}
    \label{eq:difference}
        i\partial_t\xi=(H_0+V_{\operatorname{TD}})\xi+\int_\Omega f(\bullet-y)|\psi_n(y)|^2dy \ \widetilde{\psi}-\int_\Omega\frac{1}{|x-y|}|\psi(y)|^2dy \ \psi
    \end{split}
\end{equation}
and has the property that $ \lVert\xi\rVert^2_{L^\infty(0,T;L^2)}\leq C_T\left\lVert f-\frac{1}{|\bullet|}\right\rVert_{1}$.

\end{lemm}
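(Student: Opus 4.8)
The plan is to run a Grönwall argument on $\|\xi(\bullet,t)\|_2^2$ starting from \eqref{eq:difference}, in the same spirit as the proof of Theorem \ref{theo:auxBVP}, but now tracking the dependence on the interaction kernel rather than on the domain radius. Both $\psi$ and $\widetilde\psi$ solve the Hartree equation on $\Omega$ with Dirichlet boundary conditions and the same initial state $\psi_0$, so $\xi=\widetilde\psi-\psi$ satisfies $\xi(\bullet,0)=0$ and $\xi(\bullet,t)|_{\partial\Omega}=0$. Multiplying \eqref{eq:difference} by $\overline\xi$, integrating over $\Omega$, and taking imaginary parts makes the linear self-adjoint part $(H_0+V_{\operatorname{TD}})\xi$ drop out: integration by parts produces a boundary term $\int_{\partial\Omega}\overline\xi\nabla_n\xi\ dS$ that vanishes because $\xi$ has vanishing Dirichlet data, and the potential terms are real. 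This leaves
\begin{equation*}
\frac{1}{2}\frac{d}{dt}\|\xi(\bullet,t)\|_2^2 = \Im\int_\Omega\Big(\int_\Omega f(x-y)|\widetilde\psi(y,t)|^2\,dy\,\widetilde\psi(x,t)-\int_\Omega\frac{|\psi(y,t)|^2}{|x-y|}\,dy\,\psi(x,t)\Big)\overline{\xi(x,t)}\,dx.
\end{equation*}

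Next I would split the bracket into the difference of the kernels acting on $\widetilde\psi$ plus the difference of the Hartree nonlinearities for the fixed Coulomb kernel, i.e. $\big(f-\tfrac{1}{|\bullet|}\big)*|\widetilde\psi|^2\cdot\widetilde\psi+\big(F_\Omega(\widetilde\psi)-F_\Omega(\psi)\big)$. For the second piece, Lemma \ref{EuanRulesForF}(1) gives $\|F_\Omega(\widetilde\psi)-F_\Omega(\psi)\|_2\le c(\|\widetilde\psi\|_{H^1}^2+\|\psi\|_{H^1}^2)\|\xi\|_2$; since the analogue of Theorem \ref{theo:HF} holds for the $f$-equation as well (with a constant that is uniform as long as $f$ stays bounded in, say, $L^2(\Omega)$, which is the only regime of interest), the $H^1$ norms are controlled by $C_T\|\psi_0\|_{H^2_2}$, and Cauchy--Schwarz bounds this contribution by $\lesssim\|\xi\|_2^2$. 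For the first piece I use Young's convolution inequality $\|(f-\tfrac{1}{|\bullet|})*|\widetilde\psi|^2\|_\infty\le\|f-\tfrac{1}{|\bullet|}\|_1\|\widetilde\psi\|_\infty^2$ together with the Sobolev embedding $H^2(\Omega)\hookrightarrow L^\infty(\Omega)$ and the a priori bound on $\|\widetilde\psi\|_{H^2_2}$ to get $\|(f-\tfrac{1}{|\bullet|})*|\widetilde\psi|^2\|_\infty\lesssim\|f-\tfrac{1}{|\bullet|}\|_1$; another Cauchy--Schwarz, using conservation of $\|\widetilde\psi\|_2$, bounds this contribution by $\lesssim\|f-\tfrac{1}{|\bullet|}\|_1\|\xi\|_2$.

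Combining the two estimates gives $\frac{d}{dt}\|\xi(\bullet,t)\|_2^2\lesssim\|\xi(\bullet,t)\|_2^2+\|f-\tfrac{1}{|\bullet|}\|_1\|\xi(\bullet,t)\|_2$. Estimating $\|f-\tfrac{1}{|\bullet|}\|_1\|\xi\|_2\le\tfrac12\|f-\tfrac{1}{|\bullet|}\|_1(1+\|\xi\|_2^2)$ turns this into a differential inequality of the form $\frac{d}{dt}\|\xi\|_2^2\le C_T\|\xi\|_2^2+C_T\|f-\tfrac{1}{|\bullet|}\|_1$, where the extra factor $\|f-\tfrac{1}{|\bullet|}\|_1$ multiplying $\|\xi\|_2^2$ has been absorbed into the constant (legitimate, since the statement is only of interest when this quantity is bounded). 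With $\xi(\bullet,0)=0$, Grönwall's inequality then yields $\|\xi(\bullet,t)\|_2^2\le C_T\|f-\tfrac{1}{|\bullet|}\|_1$ for all $t\in[0,T]$, hence $\|\xi\|_{L^\infty(0,T;L^2)}^2\le C_T\|f-\tfrac{1}{|\bullet|}\|_1$.

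The only genuinely delicate point is obtaining the \emph{linear} power of $\|f-\tfrac{1}{|\bullet|}\|_1$ rather than the quadratic power that the naive $ab\le\tfrac12(a^2+b^2)$ would produce; this is what forces the asymmetric splitting $ab\le\tfrac12 a(1+b^2)$ and the remark that the resulting harmless factor can be swept into the Grönwall constant. A secondary technical point is making the a priori $H^2_2$ (hence $L^\infty$) bound on $\widetilde\psi$ uniform in $f$, which is where one uses that $1/|\bullet|\in L^2(\Omega)$ for bounded $\Omega$ so that nearby kernels $f$ are uniformly controlled. Everything else — the vanishing of the linear term, Lemma \ref{EuanRulesForF}, and the Grönwall step — is routine.
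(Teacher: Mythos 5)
Your proof is correct and follows essentially the same route as the paper: an $L^2$ energy/Gr\"onwall argument on $\xi$ in which the nonlinearity is split into a kernel-difference term, controlled via Young's inequality and the a priori $H^2_2$ bounds, plus a density-difference term controlled linearly in $\Vert\xi\Vert_{2}$. The only (cosmetic) differences are that you invoke Lemma \ref{EuanRulesForF}(1) for the second piece where the paper re-derives it using $\Vert 1/\vert\bullet\vert\Vert_{L^2(\Omega)}<\infty$, and you use the $L^1*L^2\to L^2$ versus $L^1*L^\infty\to L^\infty$ form of Young's inequality; your explicit asymmetric estimate $ab\le\tfrac12 a(1+b^2)$ (or, even simpler, $\Vert\xi\Vert_2\le 2\Vert\psi_0\Vert_2$) correctly secures the linear power of $\Vert f-1/\vert\bullet\vert\Vert_{1}$, which is the step the paper passes over quickly.
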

\begin{proof}
Now multiply \eqref{eq:difference} by $\overline{\xi}$, take the imaginary part and integrate over $\Omega$ to get
\begin{equation*}
    \begin{split}
        \frac{1}{2}\frac{d}{dt}\Vert \xi \Vert^2_{2}
        &= \Im\int_\Omega\int_\Omega \left(f(x-y)|\widetilde{\psi}(y)|^2-\frac{1}{|x-y|}|\psi(y)|^2\right)dy \ \psi(x)\overline{\xi}(x)dx\\
        &\leq \left\lVert\int_\Omega f(\bullet-y)|\widetilde{\psi}(y)|^2-\frac{1}{|\bullet-y|}|\psi(y)|^2dy\right\rVert_{2}\Vert\psi\Vert_{L^\infty}\Vert\xi\Vert_{2}\\
        &=\left\lVert\int_\Omega \left(f(\bullet-y)-\frac{1}{|\bullet-y|}\right)|\widetilde{\psi}(y)|^2+\frac{1}{|\bullet-y|}\left(|\widetilde{\psi}(y)|^2-|\psi(y)|^2\right)dy\right\rVert_{2}\Vert\psi\Vert_{L^\infty}\Vert\xi\Vert_{2}.
    \end{split}
\end{equation*}

We can bound the first term in the preceding equation by Young's convolution inequality
\begin{equation*}
    \begin{split}
       &\left\lVert\ \left(f-\frac{1}{|\bullet|}\right)*|\widetilde{\psi}(\bullet)|^2\right\rVert_{2}\leq\left\lVert f-\frac{1}{|\bullet|}\right\rVert_{1}\lVert|\widetilde{\psi}|^2\rVert_{2}\lesssim\left\lVert f-\frac{1}{|\bullet|}\right\rVert_{1}\lVert\widetilde{\psi}\rVert_{H^2_2}^2
    \end{split}
\end{equation*}
and similarly for the second term
\begin{equation*}
    \begin{split}
        \left\lVert \frac{1}{|\bullet|}*\left(|\widetilde{\psi}(\bullet)|^2-|\psi(\bullet)|^2\right) \right\rVert_{2} &\leq \left\lVert \frac{1}{|\bullet|}\right\rVert_{2} \left\lVert |\widetilde{\psi}(\bullet)|^2-|\psi(\bullet)|^2 \right\rVert_{1}\\
        &= C \int_\Omega |\psi(x)+\xi(x)|^2-|\psi(x)|^2 \ dx\\
        &\lesssim\int_\Omega |\psi(x)||\xi(x)|+|\widetilde{\psi}(x)||\xi(x)|\ dx\\
        &\lesssim\left(\lVert\psi\rVert_{2}+\lVert\widetilde{\psi}\rVert_{2}\right)\lVert\xi\rVert_{2}.
    \end{split}
\end{equation*}

Now using these bounds and that $\lVert\psi\rVert_{H^2_2},\lVert\widetilde{\psi}\rVert_{H^2_2}\lesssim\lVert\psi_0\rVert_{H^2_2}$ 
\begin{equation*}
    \begin{split}
        \frac{d}{dt}\lVert\xi\rVert^2_{2} &\lesssim\left(\left\lVert f-\frac{1}{|\bullet|}\right\rVert_{1}\lVert\psi_0\rVert_{H^2_2}^2+\lVert\psi_0\rVert_{H^2_2}\lVert\xi\rVert_{2}\right)\lVert\psi_0\rVert_{H^2_2}\lVert\xi\rVert_{2}\\
        &\lesssim\left\lVert f-\frac{1}{|\bullet|}\right\rVert_{1}\lVert\psi_0\rVert_{H^2_2}^3+C\lVert\psi_0\rVert_{H^2_2}^2\lVert\xi\rVert_{2}^2.
    \end{split}
\end{equation*}

Integrating with respect to time gives and an application of Gr\"onwall's inequality yields
\begin{equation*}
    \begin{split}
        \lVert\xi(t)\rVert_{2}^2\lesssim\left\lVert f-\frac{1}{|\bullet|}\right\rVert_{1}t\exp{Ct}.
    \end{split}
\end{equation*}

Hence, $ \lVert\xi\rVert^2_{L^\infty(0,T;L^2)}\leq C_T\left\lVert f-\frac{1}{|\bullet|}\right\rVert_{1}$.
\end{proof}

\begin{ex}
We can therefore choose the smooth function $f_{\varepsilon}(x):=\frac{1}{(\vert x \vert^2+\varepsilon^2)^{1/2}}$ which implies that on any bounded domain $\Omega$ 
\[\Vert f_{\varepsilon}-1/|\bullet | \Vert_{L^1} = \mathcal O_{\Omega}(\varepsilon).\]

\end{ex}
\section{Numerical methods}

We have shown so far that we can replace both the Hartree equation, in Theorem \ref{theo:auxBVP}, and linear magnetic Schrödinger equation, in Theorem \ref{theo:Dirichlet}, with the same equation but now on a bounded domain. 
\label{sec:numerics}
\subsection{Continuous Strang splitting scheme} 
For our numerical approximation, we consider the following continuous \emph{Strang splitting scheme} of the solution on the bounded domain $\Omega:$
We now want to approximate the solution $\psi(t_n)$ with $t_n=n\tau$ for a step size $\tau>0$ by functions $\psi_n$
\begin{equation}
    \begin{split}
    \label{eq:strangsplitting}
 \psi_{n+1/2}^{-}&=e^{-\frac{i}{2} \tau (-i\nabla-A)^2 }\psi_n, \\
 \psi_{n+1/2}^+ &= e^{-i\tau \mathcal V[\psi_{n+1/2}^-] }\psi_{n+1/2}^- ,\text{ and }\\
 \psi_{n+1} &= e^{-\frac{i}{2} \tau (-i\nabla-A)^2}\psi_{n+1/2}^+.
 \end{split}
\end{equation}
Here, $\mathcal V[\psi]:=f*\vert \psi \vert^2+V$ where $V$ includes all contributions to the potential and $e^{-\frac{i}{2} \tau (-i\nabla-A)^2}\psi$ is the solution to the linear Schr\"odinger equation 
\begin{equation*}
\begin{split}
i \partial_t u(x,t) &= (-i\nabla-A)^2 u(x,t), \quad (x,t) \in \Omega \times (0,T) \\
u(x,t)&=0 \quad (x,t) \in \partial \Omega \times (0,T)\\
u(\bullet,0)&=\psi \in H^2(\Omega) \cap H^1_0(\Omega).
\end{split}
\end{equation*}
evaluated at time $t=\tau/2.$ 

\subsubsection{Perturbation of the potential}

\begin{lemm}
\label{lemm:potpert}
Let $V,U \in L^2(\Omega)$ be potentials and denote by $\psi_V,\psi_U,$ two solutions to the Hartree equation \eqref{eq:HF2} with the respective time-independent potential with Dirichlet boundary conditions and some initial state $\varphi_0 \in H^2(\Omega)\cap H^1_0(\Omega)$, then for some constant $C(T)>0$
\[ \Vert \psi_V-\psi_{U} \Vert_{2} \le C(T, \varphi_0) \Vert V-U \Vert_{2}.\]
\end{lemm}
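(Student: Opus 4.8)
The plan is to run a soft energy estimate on the difference $\xi := \psi_V - \psi_U$, in the spirit of the uniqueness argument inside Lemma~\ref{EuanExistenceLocalSoln} and of Theorem~\ref{theo:auxBVP}. Subtracting the two copies of \eqref{eq:HF2} and rewriting the linear potential term as $V\psi_V - U\psi_U = V\xi + (V-U)\psi_U$, one obtains, on $\Omega$ with Dirichlet boundary conditions,
\[
i\partial_t \xi = \big(-\Delta + V + V_{\operatorname{TD}}(t)\big)\xi + (V-U)\psi_U + F_\Omega(\psi_V) - F_\Omega(\psi_U), \qquad \xi(0)=0 .
\]
Multiplying by $\overline{\xi}$, integrating over $\Omega$ and taking imaginary parts, the Laplacian contributes the real quantity $\int_\Omega |\nabla\xi|^2$ (the boundary term vanishing since $\xi\in H^1_0(\Omega)$) and the self-adjoint, real-valued potentials $V$ and $V_{\operatorname{TD}}(t)$ contribute real integrals $\int_\Omega V|\xi|^2$, $\int_\Omega V_{\operatorname{TD}}(t)|\xi|^2$; all of these drop out under $\operatorname{Im}$. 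What is left is
\[
\tfrac12\tfrac{d}{dt}\|\xi(t)\|_{2}^2 \;\le\; \|(V-U)\psi_U(t)\|_{2}\,\|\xi(t)\|_{2} + \|F_\Omega(\psi_V(t))-F_\Omega(\psi_U(t))\|_{2}\,\|\xi(t)\|_{2}.
\]

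For the first term I would use the Sobolev embedding $H^2(\Omega)\hookrightarrow L^\infty(\Omega)$ together with the global-in-time a~priori bound $\|\psi_U(t)\|_{H^2_2}\le C_{T,\alpha,\rho}\|\varphi_0\|_{H^2_2}$ from Theorem~\ref{theo:HF}, which (as noted just before this lemma) holds verbatim for the Dirichlet problem on $\Omega$, to get
\[
\|(V-U)\psi_U(t)\|_{2} \le \|V-U\|_{2}\,\|\psi_U(t)\|_{L^\infty} \lesssim_{T,\varphi_0} \|V-U\|_{2}.
\]
For the second term I would invoke Lemma~\ref{EuanRulesForF}(1), whose constant is independent of $\Omega$, giving $\|F_\Omega(\psi_V)-F_\Omega(\psi_U)\|_{2}\le c\big(\|\psi_V\|_{H^1}^2+\|\psi_U\|_{H^1}^2\big)\|\xi\|_{2}\lesssim_{T,\varphi_0}\|\xi\|_{2}$, again using the uniform $H^2_2\hookrightarrow H^1$ bound. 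Substituting both estimates yields, with $a(t):=\|(V-U)\psi_U(t)\|_{2}\lesssim_{T,\varphi_0}\|V-U\|_{2}$ and some $b\lesssim_{T,\varphi_0}1$, the differential inequality $\tfrac{d}{dt}\|\xi(t)\|_{2}^2 \le a(t)^2 + (1+2b)\|\xi(t)\|_{2}^2$ after one application of Young's inequality $2a\|\xi\|_{2}\le a^2+\|\xi\|_{2}^2$.

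Finally, since $\xi(0)=0$, Grönwall's inequality applied to $m(t):=\|\xi(t)\|_{2}^2$ gives $m(t)\le\int_0^t e^{(1+2b)(t-s)}a(s)^2\,ds\le \|a\|_{L^\infty(0,T)}^2\,Te^{(1+2b)T}\lesssim_{T,\varphi_0}\|V-U\|_{2}^2$ for all $t\in[0,T]$, hence $\|\psi_V-\psi_U\|_{2}\le C(T,\varphi_0)\|V-U\|_{2}$, which is the claim. There is no genuine obstacle here: the result is a routine consequence of the a~priori bounds already established, the only mildly delicate points being the use of $H^2\hookrightarrow L^\infty$ in dimension three to absorb the $L^2$-potential against $\psi_U$, and the fact that the vanishing initial datum $\xi(0)=0$ is what turns the Grönwall output into a bound that is \emph{linear}, rather than merely continuous, in $\|V-U\|_{2}$.
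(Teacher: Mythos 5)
Your proposal is correct and follows essentially the same route as the paper: subtract the two equations, estimate $\frac{d}{dt}\Vert \xi \Vert_2^2$ by taking imaginary parts so the self-adjoint terms drop, bound the potential difference by $\Vert V-U\Vert_2\Vert\psi\Vert_\infty$ (via $H^2\hookrightarrow L^\infty$ and the a priori bound of Theorem \ref{theo:HF}), control the nonlinearity with Lemma \ref{EuanRulesForF}, and conclude by Young's inequality and Gr\"onwall using $\xi(0)=0$. The only cosmetic differences are that you place the factor $(V-U)$ against $\psi_U$ rather than $\psi_V$ and use part (1) of Lemma \ref{EuanRulesForF} instead of the $H^2_2$ Lipschitz bound, neither of which changes the argument.
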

\begin{proof}
Looking at the difference solution $\xi=\psi_V-\psi_{U}$, we find using Lemma \ref{EuanRulesForF}
\begin{equation*}
\begin{split}
\frac{1}{2} \frac{d}{dt} \int_{\Omega} \vert \xi(x,t) \vert^2 \ dx &= \Im\left( \int_{\Omega} ((V-U)\psi_V \bar \xi)(x) + F(\psi_U(x))-F(\psi_V(x)) \ dx \right)\\
&\le \Vert V-U \Vert_{2} \Vert \psi_V \Vert_{\infty} \Vert  \xi \Vert_2+C_F(\Vert \psi_U \Vert_{H^2_2}+\Vert \psi_V \Vert_{H^2_2})  \Vert  \xi \Vert^2_2 \\
&\le \Vert V-U \Vert_{2}^2 \Vert \psi_V \Vert_{\infty}^2 +(1+ C_F(\Vert \psi_U \Vert_{H^2_2}+\Vert \psi_V \Vert_{H^2_2}) )\Vert  \xi \Vert_2^2
\end{split}
\end{equation*}
which after a straightforward application of Gronwalls lemma implies the claim.
\end{proof}
\begin{rem}[Control functions]
A very similar argument, as in the proof of Lemma \ref{lemm:potpert} shows that the solution to the magnetic Schrödinger or Hartree equation is Lipschitz continuous with respect to the control function $u$ in the $L^1$-norm. We may therefore assume without loss of generality that the control function is piecewise constant in time and neglect the explicit time-dependence of the control function in our discretization scheme.
\end{rem}

\subsubsection{Bounding the convolution}

To estimate the convolution in the Hartree nonlinearity, we need the following auxiliary lemma whose proof follows from elementary estimates that we shall leave to the reader: \begin{lemm}
\label{lemm:L2BoundOfFStar}
Let $\Omega$ be a domain. For $u,v,w\in L^2(\Omega)$, and $f\in L^{\infty}(\Omega)$
\begin{equation*}
    \begin{split}
        \lVert (f*(uv))w\rVert_{2}\leq \Vert f \Vert_{\infty}\Vert u\Vert_{2}\Vert v\Vert_{2}\Vert w\Vert_{2}.
    \end{split}
\end{equation*}
For $u,v,w\in H^1(\Omega)$ we have
\begin{equation*}\begin{split}
    \lVert (f*(uv))w\rVert_{H^1}\lesssim\Vert f \Vert_{\infty}(\Vert u\Vert_{H^1}\Vert v\Vert_{2}\Vert w\Vert_{2}+\Vert u\Vert_{2}\Vert v\Vert_{H^1}\Vert w\Vert_{2}+\Vert u\Vert_{2}\Vert v\Vert_{2}\Vert w\Vert_{H^1})
\end{split}\end{equation*}

and for $u,v,w\in H^2(\Omega)$ 
\begin{equation*}\begin{split}
    \lVert(f*(uv))w\rVert_{H^2}\lesssim \Vert f \Vert_{\infty}\sum_{\substack{k,l,m \in \mathbb N_0\\ k+l+m=2}}\Vert u\Vert_{H^k}\Vert v\Vert_{H^l}\Vert w \Vert_{H^m}.
\end{split}\end{equation*}
\end{lemm}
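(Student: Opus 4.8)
The plan is to reduce all three bounds to two completely elementary ingredients: Young's convolution inequality in the form $\Vert f*g\Vert_{L^\infty(\Omega)}\le\Vert f\Vert_{L^\infty(\Omega)}\Vert g\Vert_{L^1(\Omega)}$, valid for $f\in L^\infty$ and $g\in L^1$, and the Cauchy--Schwarz bound $\Vert gh\Vert_{L^1(\Omega)}\le\Vert g\Vert_{L^2(\Omega)}\Vert h\Vert_{L^2(\Omega)}$. The first assertion is then immediate: given $u,v,w\in L^2(\Omega)$ one has $uv\in L^1(\Omega)$ with $\Vert uv\Vert_1\le\Vert u\Vert_2\Vert v\Vert_2$, hence $f*(uv)\in L^\infty(\Omega)$ with $\Vert f*(uv)\Vert_\infty\le\Vert f\Vert_\infty\Vert u\Vert_2\Vert v\Vert_2$, and multiplying by $w$ gives $\Vert (f*(uv))w\Vert_2\le\Vert f*(uv)\Vert_\infty\Vert w\Vert_2\le\Vert f\Vert_\infty\Vert u\Vert_2\Vert v\Vert_2\Vert w\Vert_2$.

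For the $H^1$ and $H^2$ estimates I would differentiate $(f*(uv))w$ by the Leibniz rule and then transfer every derivative falling on the convolution onto the factor $uv$, using the identity $\partial_i\big(f*(uv)\big)=f*\partial_i(uv)$ and its iterate for second derivatives; since $f$ is only assumed bounded, this transfer is the one point that deserves a line of justification, and it follows by density of smooth compactly supported functions --- equivalently, in every situation where the lemma is applied $u$ and $v$ obey Dirichlet boundary conditions, so that $uv$ and $\nabla(uv)$ vanish on $\partial\Omega$ and no boundary terms are produced. Granting this, a further application of Leibniz to $D^\beta(uv)$ shows that for a multi-index $\alpha$ with $|\alpha|\le2$ the quantity $D^\alpha\big((f*(uv))w\big)$ is a finite sum of terms $\big(f*((D^\gamma u)(D^\delta v))\big)(D^\beta w)$ with $|\gamma|+|\delta|+|\beta|=|\alpha|$. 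Each such term is controlled exactly as in the first case by $\Vert f\Vert_\infty\Vert D^\gamma u\Vert_2\Vert D^\delta v\Vert_2\Vert D^\beta w\Vert_2\le\Vert f\Vert_\infty\Vert u\Vert_{H^{|\gamma|}}\Vert v\Vert_{H^{|\delta|}}\Vert w\Vert_{H^{|\beta|}}$, and summing over the finitely many multi-indices that occur, writing $k=|\gamma|$, $l=|\delta|$, $m=|\beta|$ and absorbing the lower-order terms with $|\gamma|+|\delta|+|\beta|<|\alpha|$ into the stated right-hand sides, yields the $H^1$ bound for $|\alpha|\le1$ and the $H^2$ bound for $|\alpha|\le2$.

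The whole argument is mechanical once the convolution-differentiation identity is available, so the step I expect to be the genuine (if minor) obstacle is precisely the justification of $D^\gamma(f*(uv))=f*(D^\gamma(uv))$ for $f$ merely in $L^\infty(\Omega)$: one must check that no boundary contribution on $\partial\Omega$ appears to spoil the clean product structure of the right-hand sides. Everything else is Young's inequality, Cauchy--Schwarz, and bookkeeping of how the derivatives are distributed among $u$, $v$, and $w$.
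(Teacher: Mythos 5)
Your overall strategy is exactly the elementary argument the paper intends: the paper offers no proof at all (it explicitly leaves the estimates ``to the reader''), and your reduction via Young's inequality $\Vert f*(uv)\Vert_\infty\le\Vert f\Vert_\infty\Vert uv\Vert_1$, Cauchy--Schwarz, and the Leibniz bookkeeping of terms $\bigl(f*((D^\gamma u)(D^\delta v))\bigr)D^\beta w$ with $|\gamma|+|\delta|+|\beta|\le 2$ reproduces the stated right-hand sides correctly. The $L^2$ part is complete as written.

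The step you yourself single out is indeed the only delicate one, but your justification of $D^\gamma\bigl(f*(uv)\bigr)=f*\bigl(D^\gamma(uv)\bigr)$ is not quite right as stated: smooth compactly supported functions are \emph{not} dense in $H^1(\Omega)$ or $H^2(\Omega)$ on a bounded domain (that density characterizes $H^1_0$, $H^2_0$), so ``by density'' does not cover arbitrary $u,v$. Concretely, writing $G$ for the extension of $uv$ by zero, one has $f*(uv)=f*G$ and, distributionally, $\nabla(f*G)=f*DG$ with $DG=\nabla(uv)\,\indic_\Omega\,dx-(uv)\,\nu\,dS$ on $\partial\Omega$; the transfer identity holds precisely when the surface part vanishes, i.e. when $uv$ has zero trace (or $\Omega=\RR^3$), and it is then made rigorous by mollifying $uv$ -- not $f$. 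Your fallback remark does cover every use in the paper, since there $u,v\in H^2\cap H^1_0$, and then not only $uv$ but also $\nabla(uv)=(\nabla u)v+u\nabla v$ has vanishing trace, so both derivatives can be transferred in the $H^2$ case as well. If one insists on the lemma exactly as stated, without boundary conditions, the extra surface term is still harmless for the $H^1$ bound: it is controlled by $\Vert f\Vert_\infty\Vert u\Vert_{L^2(\partial\Omega)}\Vert v\Vert_{L^2(\partial\Omega)}$, and the multiplicative trace inequality $\Vert u\Vert_{L^2(\partial\Omega)}^2\lesssim\Vert u\Vert_{2}\Vert u\Vert_{H^1}$ together with $\sqrt{ab}\le\tfrac12(a+b)$ absorbs it into $\Vert u\Vert_{H^1}\Vert v\Vert_2+\Vert u\Vert_2\Vert v\Vert_{H^1}$, at the price of a domain-dependent constant (the $H^2$ case then needs a corresponding treatment of the differentiated surface term). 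So: approach correct, bookkeeping correct, but replace the density claim by either the zero-trace/zero-extension argument (sufficient for all applications in the paper) or the trace-inequality absorption if you want the unrestricted statement.
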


Our global convergence proof now follows from combining a stability estimate in Subsection \ref{sec:stability}, together with error propagation estimate in Subsection \ref{sec:Localerror}. This will lead us to \eqref{eq:halfway} which shows the convergence of full solution to a time-discretized Splitting scheme. We then study in Subsection \ref{sec:Spacedisc} an additional space-discretization to obtain a complete numerical discretization scheme. The final result is then summarized in Theorem \ref{theo:numerics}.

\subsubsection{Stability of the splitting scheme}
\label{sec:stability}
For the time-evolution in the following Lemma, we introduce the magnetic Sobolev norm
$$ \Vert \psi \Vert_{H^2_B}:=\Vert \psi \Vert_{L^2} + \Vert (-i\nabla-A)^2 \psi \Vert_{L^2}$$
which is clearly equivalent to the usual $H^2$ norm on bounded domains for magnetic vector potentials as specified in Assumption \ref{ass:ass1}.

\begin{lemm}[$L^2$ and $H^2$ stability] We consider the time-discrete strang splitting scheme on some bounded domain $\Omega$ for initial states $\psi_0 \in H^2(\Omega) \cap H^1_0(\Omega)$ with a magnetic field as in Assumption \ref{ass:ass1}
\begin{equation*}
    \begin{split}
        \psi_{n+1/2}^- &= e^{-\frac{i}{2}\tau (-i\nabla-A)^2}\psi_n\\
        \psi_{n+1/2}^+ &= e^{-i\tau\left( \left(f*|\psi_{n+1/2}^-|^2\right) + V\right)}\psi_{n+1/2}^-\\
        \psi_{n+1}&=e^{-\frac{i}{2}\tau(-i\nabla-A)^2}\psi_{n+1/2}^+.
    \end{split}
\end{equation*}
We then introduce the time-evolution operator by $\Phi_{\tau} \psi_n:=\psi_{n+1}.$

Let $\psi,\phi\in H^2$ be two states with $\Vert\psi\Vert_{H^2},\Vert\phi\Vert_{H^2}\leq M_1$, then
\begin{equation*}
    \begin{split}
        \Vert \Phi_\tau(\psi)-\Phi_\tau(\phi)\Vert_{2}&\leq e^{c_0\tau}\Vert\psi-\phi\Vert_{2}\text{ and }
        \Vert \Phi_\tau(\psi)-\Phi_\tau(\phi)\Vert_{H^2_B}\leq e^{c_1\tau}\Vert\psi-\phi\Vert_{H^2_B}.
    \end{split}
\end{equation*}

\end{lemm}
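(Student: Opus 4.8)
The plan is to use the product structure $\Phi_\tau = S_{\tau/2}\circ N_\tau\circ S_{\tau/2}$, where $S_t := e^{-it(-i\nabla-A)^2}$ is the Dirichlet magnetic Schr\"odinger group on $\Omega$ and $N_\tau(\chi) := e^{-i\tau((f*|\chi|^2)+V)}\chi$ is the nonlinear multiplication half-step. The crucial point is that $S_{\tau/2}$ commutes with $(-i\nabla-A)^2$ and is unitary on $L^2(\Omega)$, so it is an \emph{exact} isometry both for $\Vert\cdot\Vert_2$ and for the graph norm $\Vert\cdot\Vert_{H^2_B}$; hence all the $\tau$-dependent growth must come from $N_\tau$, and it suffices to prove one-step estimates $\Vert N_\tau(g)-N_\tau(h)\Vert_2\le(1+c_0\tau)\Vert g-h\Vert_2$ and $\Vert N_\tau(g)-N_\tau(h)\Vert_{H^2_B}\le(1+c_1\tau)\Vert g-h\Vert_{H^2_B}$ for all $g,h$ with $\Vert g\Vert_{H^2},\Vert h\Vert_{H^2}\le\widetilde M_1$, where $\widetilde M_1$ is a constant multiple of $M_1$ (from the equivalence $\Vert\cdot\Vert_{H^2_B}\sim\Vert\cdot\Vert_{H^2}$ on $\Omega$ together with the fact that $S_{\tau/2}$ preserves $\Vert\cdot\Vert_{H^2_B}$, so the intermediate state $S_{\tau/2}\psi$ still has bounded $H^2$ norm). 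Composing the middle bound with the two isometric half-steps and using $1+c\tau\le e^{c\tau}$ then gives the claim.

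For both estimates I would split, writing $W_\chi := (f*|\chi|^2)+V$,
\[ N_\tau(g)-N_\tau(h) = e^{-i\tau W_g}(g-h) + \bigl(e^{-i\tau W_g}-e^{-i\tau W_h}\bigr)h, \]
and observe that the static potential $V$ cancels in $W_g-W_h = f*(|g|^2-|h|^2)$. In $L^2$ the first term has norm exactly $\Vert g-h\Vert_2$ since $W_g$ is real-valued; for the second term one uses the pointwise bound $|e^{ia}-e^{ib}|\le|a-b|$, Young's inequality $\Vert f*(|g|^2-|h|^2)\Vert_\infty\le\Vert f\Vert_\infty\Vert\,|g|^2-|h|^2\,\Vert_1$, and the identity $|g|^2-|h|^2 = \Re\bigl[(\bar g+\bar h)(g-h)\bigr]$ to get $\Vert(e^{-i\tau W_g}-e^{-i\tau W_h})h\Vert_2\le\tau\Vert f\Vert_\infty(\Vert g\Vert_2+\Vert h\Vert_2)\Vert h\Vert_2\,\Vert g-h\Vert_2\le c_0\tau\Vert g-h\Vert_2$ with $c_0=c_0(\Vert f\Vert_\infty,M_1)$.

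The $H^2_B$ estimate is the real work, because $N_\tau$ is no longer an isometry on $H^2$ and one must commute $(-i\nabla-A)^2$ past the multiplier $e^{-i\tau W}$; this bookkeeping is the main obstacle. A direct Leibniz computation shows that $[(-i\nabla-A)^2,e^{-i\tau W}]$ equals $e^{-i\tau W}$ times a multiplication operator whose symbol is built from $\tau\nabla W$, $\tau\Delta W$, $\tau(\nabla\!\cdot A)\!\cdot\!\nabla W$ and $\tau^2|\nabla W|^2$, plus a term (multiplication by $\tau\nabla W$) composed with $(-i\nabla-A)$; using $\Vert(-i\nabla-A)\chi\Vert_2^2=\langle\chi,(-i\nabla-A)^2\chi\rangle\le\Vert\chi\Vert_{H^2_B}^2$ and $\tau\le 1$ this yields $\Vert e^{-i\tau W_g}\chi\Vert_{H^2_B}\le(1+C\tau)\Vert\chi\Vert_{H^2_B}$, where $C$ controls $\Vert\nabla W_g\Vert_\infty$ and $\Vert\Delta W_g\Vert_\infty$. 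These are finite and bounded in terms of $\widetilde M_1$ and the data: since $\nabla W_g=(\nabla f)*|g|^2+\nabla V$ and $\Delta W_g=f*\Delta(|g|^2)+\Delta V$ with $\Delta(|g|^2)=2|\nabla g|^2+2\Re(\bar g\Delta g)$, Young's inequality and the Sobolev embeddings $H^2(\Omega)\hookrightarrow L^\infty(\Omega)$, $H^1(\Omega)\hookrightarrow L^4(\Omega)$ give $\Vert\nabla W_g\Vert_\infty+\Vert\Delta W_g\Vert_\infty\lesssim (\Vert f\Vert_{W^{2,\infty}}+\Vert V\Vert_{W^{2,\infty}})(1+\widetilde M_1^2)$ — here one uses that in the discretized problem both the regularized kernel $f$ (e.g. $f_\epsilon(x)=(|x|^2+\epsilon^2)^{-1/2}$) and the regularized potential $V$ lie in $W^{2,\infty}$, and that on the bounded domain $\Omega$ the vector potential from Assumption \ref{ass:ass1} lies in $W^{1,\infty}(\Omega)$. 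This handles the first summand (with $\chi=g-h$). For the second summand $(e^{-i\tau W_g}-e^{-i\tau W_h})h$ I would again expand $(-i\nabla-A)^2$ of the product by Leibniz and bound the $L^\infty$-norms of $e^{-i\tau W_g}-e^{-i\tau W_h}$ and of its first two derivatives, each of which is $O(\tau)$ times a convolution factor formed from $W_g-W_h=f*(|g|^2-|h|^2)$ and its derivatives; estimating these by Young's inequality and Lemma \ref{lemm:L2BoundOfFStar}, the identity for $|g|^2-|h|^2$, and the above embeddings, one obtains $\Vert(e^{-i\tau W_g}-e^{-i\tau W_h})h\Vert_{H^2_B}\le C\tau\Vert g-h\Vert_{H^2_B}$. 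Summing the two contributions yields the middle-step bound with $c_1=c_1(M_1,f,V,A)$, and composing with the two isometric half-steps gives $\Vert\Phi_\tau(\psi)-\Phi_\tau(\phi)\Vert_{H^2_B}\le e^{c_1\tau}\Vert\psi-\phi\Vert_{H^2_B}$, as claimed.
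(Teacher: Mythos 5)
Your proposal is correct, and its skeleton coincides with the paper's: both exploit that $e^{-\frac{i}{2}\tau(-i\nabla-A)^2}$ is an exact isometry of $L^2$ and of the graph norm $\Vert\cdot\Vert_{H^2_B}$, so the whole estimate reduces to a one-step Lipschitz bound for the nonlinear multiplier $\psi\mapsto e^{-i\tau(f*|\psi|^2+V)}\psi$ on states with $H^2$ norm bounded by (a fixed multiple of) $M_1$. Where you diverge is in how that middle step is handled. The paper freezes the potentials, views $e^{-i\tau\mathcal V[\psi]}\psi$ and $e^{-i\tau\mathcal V[\phi]}\phi$ as solutions of linear ODEs in time, subtracts, integrates, and applies Gr\"onwall; you instead use the algebraic splitting $e^{-i\tau W_g}(g-h)+(e^{-i\tau W_g}-e^{-i\tau W_h})h$ together with unimodularity of the phase, the pointwise bound $|e^{ia}-e^{ib}|\le|a-b|$, Young's inequality and $|g|^2-|h|^2=\Re[(\bar g+\bar h)(g-h)]$. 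Your route is more elementary (no differential inequality) and yields the $(1+C\tau)\le e^{c\tau}$ factor directly; the Gr\"onwall route generalizes more readily if one later wants time-dependent potentials inside the half-step. A genuine added value of your write-up is that you actually carry out the $H^2_B$ case, which the paper dismisses with ``follows in a similar fashion'': your Leibniz/commutator computation for $[(-i\nabla-A)^2,e^{-i\tau W}]$, the identity $\Vert(-i\nabla-A)\chi\Vert_2^2=\langle\chi,(-i\nabla-A)^2\chi\rangle$, and the $W^{2,\infty}$-type bounds on $W_g$ and $W_g-W_h$ via Young and the embeddings are exactly the ingredients needed, and they match the regularity the paper itself invokes in its commutator estimate \eqref{eq:commutator}. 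One small caveat worth stating explicitly: your $H^2_B$ step uses $f,V$ with two bounded derivatives (you can shift derivatives onto $|g|^2$ to relax this on $f$, but $\Delta V\in L^\infty$ seems unavoidable), which is slightly more than the $W^{1,\infty}$ hypothesis recorded in Theorem \ref{theo:numerics}; since the paper's own $L^2$ argument already uses $\lVert V\rVert_{W^{1,\infty}}$ and its error-propagation bound uses $\lVert V\rVert_{W^{2,\infty}}$, this is a point about how the hypotheses are bookkept rather than a gap in your argument.
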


\begin{proof}
 As $e^{-\frac{i}{2}\tau (-i\nabla-A)^2}$ preserves the $L^2$ and $H^2_B$ norm, we only need to consider the evolution by $e^{-i\tau\left( \left(f*|\psi|^2\right) + V\right)}\psi.$
 We shall continue with the $L^2$ bound and observe that $e^{-i\tau\left( \left(f*|\psi|^2\right) + V\right)}\psi=\theta(\tau)$ and $e^{-i\tau\left( \left(f*|\phi|^2\right) + V\right)}\phi=\eta(\tau)$ where $\theta$ and $\eta$ are solutions to
 \begin{equation*}
     \begin{split}
         i\Dot{\theta} &= (f*|\psi|^2+V)\theta, \quad \theta(0)=\psi\\
         i\Dot{\eta}&=(f*|\psi|^2+V)\eta,\quad\eta(0)=\phi.
     \end{split}
 \end{equation*}

So taking the difference
\begin{equation*}
    \begin{split}
        i(\Dot{\theta}-\Dot{\eta})&=(f*|\psi|^2+V)\theta-(f*|\phi|^2+V)\eta\\
        &= (f*(\psi-\phi)\overline{\psi})\theta+(f*(\overline{\psi}-\overline{\phi})\phi)\theta+(f*|\phi|^2)(\theta-\eta)+V(\theta-\eta).
    \end{split}
\end{equation*}

Integrating this identity in time, we find by the unit $L^2$ norm of $\phi$ and $\psi$, and the initial state of $\theta(0)=\psi$, $\eta(0)=\phi$ that
\begin{equation*}
    \begin{split}
        \lVert\theta(t)-\eta(t)\rVert_{2}\leq \lVert \psi-\phi\rVert_{2}+ K_0\lVert\psi-\phi\rVert_{2}(\lVert \psi\rVert_{H^1}+\lVert \phi\rVert_{H^1})t\\\quad+\left(K_0\rVert\phi\rVert_{H^1}+\lVert V\rVert_{W^{1,\infty}}\right)\int_0^t\lVert\theta(s)-\eta(s)\rVert_{2} \ ds.
    \end{split}
\end{equation*}

By Gr\"onwall's inequality
\begin{equation*}
    \begin{split}
        \lVert\theta(t)-\eta(t)\rVert_{2}\leq\Vert \psi-\phi\Vert_{2}(1+K_0(\Vert\psi\Vert_{H^1}+\Vert\phi\Vert_{H^1})t)\exp\left(\left(K_0\Vert\phi\Vert_{H^1}+\lVert V\rVert_{W^{1,\infty}}\right)t\right)
    \end{split}
\end{equation*}
and thus, writing $\mathcal V[\psi]:=f*\vert \psi \vert^2+V$,
\begin{equation*}
    \begin{split}
        \Vert e^{-i\tau \mathcal V[\psi]}\psi-e^{-i\tau \mathcal V[\phi]}\phi\Vert_{2} &= \Vert \theta(\tau)-\eta(\tau)\Vert_{2}\\
        &\leq \lVert \psi-\phi\rVert_{2}(1+2K_0M_1\tau)\exp((K_0M_1+\lVert V \rVert_{W^{1,\infty}})\tau)\\
        &\le\lVert \psi-\phi\rVert_{2}e^{c_0\tau}.
    \end{split}
\end{equation*}

The proof of the $H^2_B$ estimate follows in a similar fashion.
\end{proof}

\subsubsection{Error propagation}
\label{sec:Localerror}
Using a Lie algebraic description, that is largely inspired by the analysis in \cite{L}, we want to bound the error in one step in the splitting scheme. First we let $\hat T,\hat V$ be vector fields defined by
\begin{equation}
\begin{split}
\hat T(\psi) &= -i(-i\nabla-A)^2\psi=i\Delta \psi +i\langle\nabla,A\psi\rangle+i\langle A, \nabla \psi\rangle+i \vert A \vert^2 \psi,\text{ and }\\
\hat{V}(\psi) &= -i(f*|\psi|^2+V)\psi,
\end{split}
\end{equation}
then their Lie commutator is
\begin{equation*}
    \begin{split}
        [\hat T,\hat V](\psi) =& \hat T'\vert_{\psi}(\hat V(\psi))-\hat V'\vert_{\psi}(\hat T(\psi))\\
        =& i\Delta(-i(f*\psi\overline{\psi}-V)\psi)+i(f*\psi\overline{\psi}-V)i\Delta\psi + i(f*i\Delta\psi\overline{\psi})\psi+i(f*\psi\overline{i\Delta\psi})\psi\\
        &+ i\langle \nabla, -i A(f*\psi\overline{\psi}-V) \rangle \psi +i \langle A, -i \nabla(f*\psi\overline{\psi}-V) \rangle \psi+2(f*\vert A\vert^2\vert \psi \vert^2) \psi \\ 
        &+ if*(\langle \nabla, -i A\psi \rangle\overline{\psi}+\overline{\langle \nabla, -i A\psi \rangle}\psi )\psi +i f*(\langle A, -i \nabla\psi \rangle\overline{\psi}+\overline{\langle A, -i \nabla\psi \rangle}\psi) \psi\\
        =& (\langle 2f*\nabla\psi,\nabla\overline{\psi} \rangle+2f*\psi\Delta\overline{\psi}+\Delta V)\psi + 2\langle (f*\nabla\psi \ \overline{\psi}+f*\psi \ \nabla\overline{\psi}+\nabla V),\nabla\psi \rangle\\
        &+ i\langle \nabla, -i A(f*\psi\overline{\psi}-V) \rangle \psi +i \langle A, -i \nabla(f*\psi\overline{\psi}-V) \rangle \psi+2(f*\vert A\vert^2\vert \psi \vert^2) \psi \\
        &+ if*(\langle \nabla, -i A\psi \rangle\overline{\psi}+\overline{\langle \nabla, -i A\psi \rangle}\psi )\psi +i f*(\langle A, -i \nabla\psi \rangle\overline{\psi}+\overline{\langle A, -i \nabla\psi \rangle}\psi) \psi.
    \end{split}
\end{equation*}
We thus have for the $L^2$ norm of the commutator that
\begin{equation}
    \begin{split}
    \label{eq:commutator}
        \lVert [\hat{T},\hat{V}](\psi) \rVert_{2} &\leq C\lVert \psi\rVert_{H^2_2}\lVert\psi\rVert_{H^1_1}\lVert\psi\rVert_{2}+C\lVert V \rVert_{W^{2,\infty}}\lVert\psi\rVert_{H^1_1}.
    \end{split}
\end{equation}
In the following, we write $\psi(\tau)=:\operatorname{exp}(\tau D_K)\psi_0$ to denote the solution to the equation 
\[ \psi'(t) = K(\psi(t)), \quad \psi(0)=\psi_0.\]
By applying the variation of constants formula twice, we find for the solution
\begin{equation*}
    \begin{split}
        \psi(\tau)&=\exp(\tau D_H)\operatorname{id}(\psi_0)\\
        &=\exp(\tau D_{\hat T})\operatorname{id}(\psi_0)+\int_0^\tau \exp((\tau-s)D_H)D_{\hat V}\exp(sD_{\hat T})\operatorname{id}(\psi_0) \ ds\\
        &=\exp(\tau D_{\hat T})\operatorname{id}(\psi_0)+\int_0^\tau\exp ((\tau-s)D_{\hat T})D_{\hat V}\exp(sD_{\hat T})\operatorname{id}(\psi_0) \ ds+r_1(\psi_0),
    \end{split}
\end{equation*}
with remainder term
\begin{equation*}
    \begin{split}
        r_1(\psi_0)=\int_0^\tau\int_0^{\tau-s}\exp((\tau-s-\sigma)D_H)D_{\hat V}\exp(\sigma D_{\hat T})D_{\hat V}\exp(sD_{\hat T})\operatorname{id}(\psi_0) \ d\sigma \ ds.
    \end{split}
\end{equation*}

We can also write the time-discretization of the Strang splitting \eqref{eq:strangsplitting} on continuous space $\Omega$ as
\begin{equation*}
    \begin{split}
        \psi_1=\exp(\tfrac{\tau}{2} D_{\hat T})\exp(\tau D_{\hat V})\exp(\tfrac{\tau}{2} D_{\hat T})\operatorname{id}(\psi_0).
    \end{split}
\end{equation*}

By Taylor expansion, we have $\exp(\tau D_{\hat V})=I+\tau D_{\hat V}+\tau^2\int_0^1(1-\theta)\exp(\theta\tau D_{\hat V})D_{\hat V}^2d\theta$
\begin{equation*}
    \begin{split}
        \psi_1=\exp(\tau D_{\hat T})\operatorname{id}(\psi_0)+\tau \exp(\tfrac{\tau}{2} D_{\hat T})D_{\hat V}\exp(\tfrac{\tau}{2} D_{\hat T})\operatorname{id}(\psi_0)+r_2(\psi_0)
    \end{split}
\end{equation*}

with remainder $r_2(\psi_0)=\tau^2\int_0^1(1-\theta)\exp(\frac{\tau}{2} D_{\hat T})\exp(\theta\tau D_{\hat V})D_{\hat V}^2\exp(\frac{\tau}{2}D_{\hat T})\operatorname{id}(\psi_0)d\theta.$

Therefore, the error between the actual solution and the time-discretization becomes
\begin{equation*}
    \begin{split}
        \psi_1-\psi(\tau)&=\tau\exp(\tfrac{\tau}{2} D_{\hat T})D_{\hat V}\exp(\tfrac{\tau}{2} D_{\hat T})\operatorname{id}(\psi_0)\\
        &-\int_0^\tau \exp((\tau-s)D_{\hat T})D_{\hat V}\exp(sD_{\hat T})\operatorname{id}(\psi_0)ds+(r_2-r_1)(\psi_0).
    \end{split}
\end{equation*}

Let $f(s) := \exp((\tau-s)D_{\hat T})D_{\hat V}\exp(sD_{\hat T})\operatorname{id}(\psi_0)$, then we have 
\begin{equation}
\label{eq:difference2}
\psi_1-\psi(\tau) = \int_0^\tau f(\tau/2)-f(s) \ ds+(r_2-r_1)(\psi_0).
\end{equation}

Thus, apart from the two remainder expressions, we are left to understand the operator associated with the mid-point rule $L(g)=\tau g(\frac{1}{2}\tau)-\int_0^\tau g(s)ds$. Using Peano's kernel theorem with the Peano kernel 
\begin{equation*}
    \begin{split}
        k_1(\theta) &= L[(\bullet-\theta)\indic_{\bullet\geq\theta}] = \tau\left(\theta-\frac{\tau}{2}\right)\indic_{\tau/2\leq\theta}-\theta^2/2.
    \end{split}
\end{equation*}
we are then left to study an operator, which by integration by parts, since $k_1(0)=k_1(\tau)=0$, satisfies 
\begin{equation*}
\begin{split}
    Lf &= \int_0^\tau k_1(\theta)f^{(2)}(\theta)d\theta=- \int_0^\tau k_1'(\theta)f'(\theta) d\theta.
    \end{split}
\end{equation*}
Now, clearly $|k_1'(\theta)|=|\tau\indic_{\tau/2\leq\theta}-\theta|\leq\tau$ and
\begin{equation*}
    \begin{split}
        f'(s) &= -\exp((\tau-s)D_{\hat T})[D_{\hat T},D_{\hat V}]\exp(sD_{\hat T})\operatorname{id}(\psi_0)\\
        &=\exp((\tau-s)D_{\hat T})D_{[\hat T,\hat V]}\exp(sD_{\hat T})\operatorname{id}(\psi_0)\\
        &=e^{-is(-i \nabla-A)^2}[\hat T,\hat V](e^{-i(\tau-s)(-i \nabla-A)^2}\psi_0).
    \end{split}
\end{equation*}
which we can bound, using  \eqref{eq:commutator}, as
\[ \lVert f'(s)\rVert_{2}\leq C(\lVert\psi\rVert_{H^2_2}\lVert\psi\rVert_{H^1_1}\lVert\psi\rVert_{2}+\lVert V\rVert_{W^{2,\infty}}\lVert\psi\rVert_{H^1_1}).\]

This implies that 
\begin{equation}
\label{eq:Lf}
\lVert Lf \rVert_{2}\leq C\tau^2(\lVert\psi\rVert_{H^2_2}\lVert\psi\rVert_{H^1_1}\lVert\psi\rVert_{2}+\lVert V\rVert_{W^{2,\infty}}\lVert\psi\rVert_{H^1_1}).
\end{equation}

So now estimating the remainder terms in \eqref{eq:difference2} using
\begin{equation*}
    \begin{split}
        \lVert \hat{V}(\psi)\rVert_{2} &\leq C\Vert f \Vert_{\infty}\lVert\psi\rVert_{2}^3+\lVert V\rVert_{W^{\infty}}\lVert \psi\rVert_{2}\\
        \lVert \hat{V}'(\psi)\phi\rVert_{2} &\leq C\Vert f \Vert_{\infty}\lVert\psi\rVert_{2}^2\Vert \phi \Vert_2+\lVert V\rVert_{W^{\infty}}\lVert \phi\rVert_{2},
    \end{split}
\end{equation*}
 we infer that $\lVert r_1\rVert_{2}+\lVert r_2\rVert_{2}\leq C_2\tau^2$ and hence by combining the remainder estimates with \eqref{eq:Lf} $\lVert\psi_1-\psi(\tau)\rVert_{2}\leq C_4\tau^2.$

By the usual Lady Windermere's fan argument, this implies that
\begin{equation}
\label{eq:halfway}
 \lVert\psi_N-\psi(N\tau)\rVert_{2}\leq C(T)\tau.
 \end{equation}

This shows the explicit global convergence of our time-discretized but space-continuous splitting scheme. Our next goal is to obtain a space-discretized version of \eqref{eq:halfway}.

\subsection{Space-Discrete Splitting scheme}
\label{sec:Spacedisc}
To define a convergent numerical discretization scheme, we employ a \emph{cubic discretization}.

\textbf{Cubic Discretization:} 
Consider a lattice of side length $h$ with lattice points $(x_j)_{j\in\ZZ^3}$ and a family of cubes $Q_{x_j}=\times_{i=1}^3[x_j^i-\tfrac{h}{2},x_j^i+\tfrac{h}{2})$ with $j\in\ZZ^3$ that form a disjoint decomposition of $\RR^3$ up to a set of measure zero. The cubic approximation of a function $f\in L_{loc}^1(\RR^3,\mathbb{C})$ is defined by
\begin{equation*}
    \begin{split}
        f_Q(x):=\sum_{j\in\ZZ^3} \frac{1}{\vol(Q_{x_j})}\int_{Q_{x_j}}f(s) \ ds \indic_{Q_{x_j}}(x). 
    \end{split}
\end{equation*}

Now we need to introduce the discrete derivative, so let $(\tau_h^if)(x)=f(x-h\hat{e_i})$ be the translation by $h$ and $\delta^i_h=(\tau^i_{-h}-\tau^i_h)/(2h)$ the discretized symmetric derivative in direction $i$ with step $h>0$. Then we can define the discretized Laplacian $\Delta^h=\sum_{i=1}^3(\delta^i_h)^2$ and discretized gradient $\nabla^h=(\delta_h^i)_i$.

Then it is easy to verify that $(\delta^i_h)^*=-\delta_h^i$ and also the product rule holds
\[(\delta_hfg)(x)=f(x+h)(\delta_hg)(x)+g(x-h)(\delta_hf)(x).\]
Moreover, by \cite[Prop. 8.9]{BH}, for $n\in\ZZ_+, k\in{1,2,3}, f\in W^{n,p}(\RR^d)$ and $p\in[1,\infty)$ we have
\begin{equation*}
    \begin{split}
        \lVert ((\delta_h^k)^n-\partial_k^n)f\rVert_{L^p(\RR^3)}=o(1) \operatorname{ as } h\to0
    \end{split}
\end{equation*}

and for $f\in W^{n+1,p}(\RR^3)$ and $p\in[1,\infty]$ there is a constant $C>0$ independent of $f$ and $h$ such that
\begin{equation*}
    \begin{split}
        \lVert (\delta_h^k)^nf_Q-\partial^n_kf\rVert_{L^p(\RR^3)}\leq C\lVert f\rVert_{W^{n+1,p}(\RR^3)}h \  \operatorname{  and  } \ \lVert (\delta_h^k)^nf\rVert_{L^p}\leq C\lVert f\rVert_{W^{n,p}(\RR^d)}.
    \end{split}
\end{equation*}
In addition, for $p\in(1,\infty)$, $f\in W^{n+\epsilon,p}(\RR^3)$, and any $\epsilon\in(0,1]$ it follows that 
\[\lVert (\delta_h^k)^nf_Q-\partial_k^nf\rVert_{L^p(\RR^3)}=\mathcal{O}(\lVert f\rVert_{W^{n+\epsilon,p}(\RR^3)}h^\epsilon).\]

We can now directly use \cite[Lemma 8.10]{BH} that for our Schr\"odinger equation using the discretized derivative there is a constant $C=C(T)$ such that $ \lVert \psi^h\rVert_{L^\infty((0,T),H^2(\Omega))}\leq C\lVert \phi_0\rVert_{H^2(\Omega)}.$

Now we need the equivalent of \cite[Lemma 8.11]{BH}, whose proof is a simple adaptation of the proof given in \cite{BH}, but with a $H^2_h$ norm and a magnetic Schrödinger operator, instead.

\begin{lemm}
\label{lemm:lemm1}
For an initial state $\phi_0\in H^{2+\epsilon}(\Omega)$ the difference between $\psi$, the solution to the Schr\"odinger equation $
        i\partial_t\psi=(-i \nabla-A)^2\psi$ with initial state $\phi_0,$
and the solution $\psi^h$, to the discretized Schr\"odinger equation $i\partial_t\psi^h=H^h_{\operatorname{lin}}\psi^h$ with initial state $\psi^h(0)=(\phi_0)_Q$ and Hamiltonian $H^h_{\operatorname{lin}}:=(-i\nabla^h-A^h)^2$
satisfies for some constant $C=C(T,\lVert\phi_0\rVert_{H^{2+\epsilon}})$
\begin{equation*}
    \begin{split}
        \lVert \psi-\psi^h\rVert_{L^\infty((0,T),L^2)}\leq Ch^\epsilon.
    \end{split}
\end{equation*}

\end{lemm}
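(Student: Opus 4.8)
I would prove this in the standard way that the analogous non-magnetic statement \cite[Lemma 8.11]{BH} is proved, i.e.\ by a Duhamel/Gr\"onwall argument comparing the two evolutions, using the already-established $H^2$ stability and the consistency estimates for the discretized operators. The new ingredient is only that the generator is the magnetic Schr\"odinger operator $(-i\nabla-A)^2$ rather than $-\Delta$, so I must track the extra first-order and zeroth-order terms $A\cdot\nabla$, $(\nabla\cdot A)$ and $|A|^2$ and their discrete counterparts; since $A\in W^{1,\infty}$ (or $A$ is the linear vector potential of a constant field, which on the bounded domain $\Omega$ is also Lipschitz and bounded), all of these are harmless bounded perturbations up to the discretization error.

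\textbf{Step 1: set up the difference equation.} Write $\xi := \psi - \psi^h$, so that $\xi(0) = \phi_0 - (\phi_0)_Q$, which by the cubic-approximation estimate satisfies $\lVert \xi(0)\rVert_{L^2} \le C\lVert \phi_0\rVert_{H^1}h \le C\lVert\phi_0\rVert_{H^{2+\epsilon}}h$. Subtracting the two equations,
\begin{equation*}
i\partial_t \xi = (-i\nabla-A)^2 \xi + \big( (-i\nabla-A)^2 - (-i\nabla^h-A^h)^2\big)\psi^h,
\end{equation*}
so that $\xi$ solves the linear magnetic Schr\"odinger equation driven by the source term $G^h := \big((-i\nabla-A)^2 - H^h_{\operatorname{lin}}\big)\psi^h$. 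Multiplying by $\overline{\xi}$, integrating over $\Omega$ (the boundary terms vanish since both $\psi$ and $\psi^h$ satisfy the Dirichlet condition, and the $|A|^2$ term is real so drops out of the imaginary part), and taking the imaginary part gives
\begin{equation*}
\tfrac12 \tfrac{d}{dt}\lVert \xi(t)\rVert_{L^2}^2 \le \lVert \xi(t)\rVert_{L^2}\,\lVert G^h(t)\rVert_{L^2}.
\end{equation*}

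\textbf{Step 2: estimate the source term.} Expanding $(-i\nabla-A)^2 = -\Delta + i(\nabla\cdot A) + 2iA\cdot\nabla + |A|^2$ and likewise for the discretized operator, $G^h$ is a sum of terms of the shape $(\Delta^h - \Delta)\psi^h$, $(A^h\cdot\nabla^h - A\cdot\nabla)\psi^h$, $((\nabla^h\cdot A^h) - (\nabla\cdot A))\psi^h$, and $((A^h)^2 - A^2)\psi^h$. Each is controlled by the consistency bounds quoted from \cite[Prop. 8.9]{BH}: $\lVert(\Delta^h - \Delta)g\rVert_{L^2} \lesssim \lVert g\rVert_{H^{2+\epsilon}}h^\epsilon$ for $g\in H^{2+\epsilon}$, $\lVert(\nabla^h - \nabla)g\rVert_{L^2}\lesssim \lVert g\rVert_{H^{1+\epsilon}}h^\epsilon$, together with $\lVert A - A^h\rVert_{L^\infty} = \lVert A - A_Q\rVert_{L^\infty}\lesssim \lVert A\rVert_{W^{1,\infty}}h$ and the uniform bounds $\lVert\nabla^h g\rVert_{L^2}\lesssim \lVert g\rVert_{H^1}$, $\lVert A^h\rVert_{L^\infty}\le \lVert A\rVert_{L^\infty}$. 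Combining these with the $H^2$ bound $\lVert\psi^h\rVert_{L^\infty((0,T),H^2(\Omega))}\le C\lVert\phi_0\rVert_{H^2}$ already established, one gets $\lVert G^h(t)\rVert_{L^2}\le C(T,\lVert\phi_0\rVert_{H^{2+\epsilon}})\,h^\epsilon$ uniformly in $t\in[0,T]$. (One subtlety: to get the $h^\epsilon$ rate rather than $o(1)$, I need the initial datum — and hence, via the $H^2$-stability propagated to the $H^{2+\epsilon}$ level, the solution $\psi^h$ — in $H^{2+\epsilon}$; this is exactly why the hypothesis $\phi_0\in H^{2+\epsilon}$ is imposed. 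If the paper's cited $H^2$ stability does not automatically upgrade to $H^{2+\epsilon}$, one instead applies the consistency estimate to $\psi$ rather than $\psi^h$ after first adding and subtracting $H^h_{\operatorname{lin}}\psi$, reducing to $\lVert(H^h_{\operatorname{lin}} - (-i\nabla-A)^2)\psi\rVert_{L^2}\lesssim \lVert\psi\rVert_{H^{2+\epsilon}}h^\epsilon$, which is the cleanest route.)

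\textbf{Step 3: Gr\"onwall.} From $\tfrac{d}{dt}\lVert\xi(t)\rVert_{L^2} \le \lVert G^h(t)\rVert_{L^2} \le C h^\epsilon$ and $\lVert\xi(0)\rVert_{L^2}\le C h \le C h^\epsilon$, integrating in time gives $\lVert\xi(t)\rVert_{L^2}\le \lVert\xi(0)\rVert_{L^2} + CTh^\epsilon \le C(T,\lVert\phi_0\rVert_{H^{2+\epsilon}})h^\epsilon$ for all $t\in[0,T]$, which is the claimed bound. (If one prefers to keep the $\lVert\xi\rVert_{L^2}$ factor on the right, the differential inequality $\tfrac12\tfrac{d}{dt}\lVert\xi\rVert^2 \le \lVert\xi\rVert\lVert G^h\rVert$ together with $\tfrac{d}{dt}\lVert\xi\rVert\le \lVert G^h\rVert$ yields the same conclusion.)

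\textbf{Main obstacle.} The only genuine point requiring care is the one flagged in Step 2: ensuring the approximation rate is $h^\epsilon$ rather than merely $o(1)$, which forces the comparison to be made against a function with $H^{2+\epsilon}$ regularity that is uniformly bounded on $[0,T]$. The magnetic terms themselves contribute nothing new beyond bounded-coefficient bookkeeping, since $A$ and $A^h$ are bounded with $A - A^h = O(h)$ in $L^\infty$, and the first-order discrete operators are bounded $H^1\to L^2$ with the stated $O(h^\epsilon)$ consistency.
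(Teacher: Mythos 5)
The paper never writes out a proof of this lemma: it simply points to \cite[Lemma 8.11]{BH} and declares the magnetic case a ``simple adaptation'', and your Gr\"onwall-plus-consistency argument is precisely that adaptation, so your approach matches the intended one and the estimates you invoke (cubic-approximation error $O(h)$ for $\phi_0-(\phi_0)_Q$, $O(h^\epsilon)$ consistency of $\nabla^h,\Delta^h$, boundedness of $A,A^h$ with $A-A^h=O(h)$ in $L^\infty$) are the right ingredients. One caveat: your primary decomposition places the source $\bigl((-i\nabla-A)^2-H^h_{\operatorname{lin}}\bigr)\psi^h$ on the discrete solution, which emanates from the merely piecewise-constant datum $(\phi_0)_Q$ and hence has no continuous $H^{2+\epsilon}$ (or even $H^2$) regularity --- indeed $(-i\nabla-A)^2\psi^h$ is not defined in the classical sense --- so the variant you flag parenthetically as ``the cleanest route'' must be taken as the main argument: let the bounded, self-adjoint operator $H^h_{\operatorname{lin}}$ (self-adjointness follows from $(\delta^i_h)^*=-\delta^i_h$ and $A^h$ real) act on the difference $\xi$, put the source $\bigl(H^h_{\operatorname{lin}}-(-i\nabla-A)^2\bigr)\psi$ on the smooth solution, and supply the uniform-in-time $H^{2+\epsilon}$ bound on $\psi$, which follows from conservation of $\lVert H_B^{(2+\epsilon)/2}\psi(t)\rVert_{L^2}$ under the linear magnetic flow together with elliptic regularity for the magnetic Dirichlet Laplacian on the ball.
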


To analyze the second step of the splitting scheme, where we ought to compare the dynamics induced by  $V[\psi]=f*|\psi|^2+V=\int_\Omega f(x-y)|\psi(y)|^2 \ dy+V$ with the one in which $f$ is replaced by $f_Q$ and $V$ by $V_Q$

\begin{lemm}
\label{lemm:lemm2}
For an initial state $\phi_0\in H^{2}(\Omega)$, potential $V\in W^{3,\infty}(\Omega)$ and $f\in C^\infty(\Omega)$ then there is a constant $C=C(\lVert\phi_0\rVert_{H^{2}},\lVert V\rVert_{2},T)$ such that if $\psi$ and $\psi^h$ are respectively the solutions to
\begin{equation*}
    \begin{split}
        i\partial_t\psi &= (f*|\phi_0|^2+V)\psi\text{ with }
        \psi(0) = \phi_0 \text{ and }\\
        i\partial_t\psi^h &= (f_Q*|\phi_0|^2_Q+V_Q)\psi^h\text{ with }
        \psi^h_0=(\phi_0)_Q,
    \end{split}
\end{equation*}
then $\lVert\psi-\psi^h\rVert_{L^\infty((0,T),L^2)}\leq Ch.$

\end{lemm}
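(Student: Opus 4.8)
The plan is to compare $\psi$ and $\psi^h$ through the difference $\xi:=\psi-\psi^h$. Subtracting the two evolution equations and adding and subtracting $(f_Q*|\phi_0|_Q^2+V_Q)\psi$ gives
\begin{equation*}
i\partial_t\xi=(f_Q*|\phi_0|_Q^2+V_Q)\xi+g\,\psi,\qquad \xi(0)=\phi_0-(\phi_0)_Q,
\end{equation*}
where $g:=(f*|\phi_0|^2+V)-(f_Q*|\phi_0|_Q^2+V_Q)$ is \emph{time-independent}, since the modulating potential is frozen at the initial state. First I would note that $f_Q*|\phi_0|_Q^2+V_Q$ is real-valued — a convolution of two real functions plus the real function $V_Q$ — and bounded, with $\lVert f_Q*|\phi_0|_Q^2\rVert_\infty\le\lVert f\rVert_\infty\lVert\phi_0\rVert_2^2$ and $\lVert V_Q\rVert_\infty\le\lVert V\rVert_\infty$ by the $L^p$-contractivity of the cubic approximation; hence $\psi^h$ exists and $\lVert\psi^h(t)\rVert_2=\lVert(\phi_0)_Q\rVert_2$ is conserved, just as $\lVert\psi(t)\rVert_2=\lVert\phi_0\rVert_2$. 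Multiplying the equation for $\xi$ by $\bar\xi$, integrating over $\Omega$ and taking imaginary parts kills the first term, leaving
\begin{equation*}
\tfrac12\tfrac{d}{dt}\lVert\xi\rVert_2^2\le\lVert g\,\psi\rVert_2\lVert\xi\rVert_2\le\lVert g\rVert_\infty\lVert\phi_0\rVert_2\lVert\xi\rVert_2,
\end{equation*}
so Gr\"onwall's inequality (applied to $\lVert\xi\rVert_2^2$ after $ab\le\tfrac12 a^2+\tfrac12 b^2$) yields $\lVert\xi\rVert_{L^\infty((0,T),L^2)}\le C(T)\big(\lVert\phi_0-(\phi_0)_Q\rVert_2+\lVert g\rVert_\infty\lVert\phi_0\rVert_2\big)$.

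It then remains to prove $\lVert\phi_0-(\phi_0)_Q\rVert_2$ and $\lVert g\rVert_\infty$ are $\mathcal O(h)$. For the initial-data error this is the $n=0$, $p=2$ case of the cubic-approximation bound $\lVert f_Q-f\rVert_{L^p}\le C\lVert f\rVert_{W^{1,p}}h$ quoted from \cite[Prop.~8.9]{BH}, since $\phi_0\in H^2(\Omega)\subset H^1(\Omega)$; the same bound with $p=\infty$ controls the $V$-contribution to $g$, namely $\lVert V-V_Q\rVert_\infty\le C\lVert V\rVert_{W^{1,\infty}}h$. For the nonlinear part of $g$ I would split
\begin{equation*}
f*|\phi_0|^2-f_Q*|\phi_0|_Q^2=(f-f_Q)*|\phi_0|^2+f_Q*\big(|\phi_0|^2-|\phi_0|_Q^2\big)
\end{equation*}
and estimate each summand in $L^\infty$ by Young's inequality ($L^2*L^2\hookrightarrow L^\infty$) together with $\lVert f_Q\rVert_2\le\lVert f\rVert_2$, obtaining a bound $\lVert f-f_Q\rVert_2\,\lVert|\phi_0|^2\rVert_2+\lVert f\rVert_2\,\lVert|\phi_0|^2-|\phi_0|_Q^2\rVert_2$. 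Each of the first factors is again $\mathcal O(h)$ by the cubic bound with $p=2$, but only provided $|\phi_0|^2\in H^1(\Omega)$ — and this is precisely where $\phi_0\in H^2(\Omega)$ enters: in dimension three $H^2(\Omega)\hookrightarrow L^\infty(\Omega)$, so $\nabla|\phi_0|^2=2\Re(\bar\phi_0\nabla\phi_0)\in L^2$ with $\lVert|\phi_0|^2\rVert_{H^1}\lesssim\lVert\phi_0\rVert_\infty\lVert\phi_0\rVert_{H^1}\lesssim\lVert\phi_0\rVert_{H^2}^2$, and likewise $\lVert|\phi_0|^2\rVert_2\lesssim\lVert\phi_0\rVert_{H^2}^2$. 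Collecting the pieces gives $\lVert g\rVert_\infty\le Ch$ with $C$ depending on $\lVert\phi_0\rVert_{H^2}$, $\lVert V\rVert_{W^{1,\infty}}$, and the relevant norms of $f$, and the previous display closes the argument.

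The only genuinely delicate point is this last one: one needs $L^\infty$ control on the convolution difference at rate $h$, not merely $L^2$ control, which forces the detour through $|\phi_0|^2\in H^1$ and the embedding $H^2\hookrightarrow L^\infty$. A routine technicality I would dispatch quickly is that the cubic-approximation estimates of \cite{BH} are stated on $\RR^3$, so one first extends $\phi_0$, $V$, and $|\phi_0|^2$ to $\RR^3$ by a bounded Sobolev extension (legitimate since $\partial\Omega$ is piecewise smooth), or observes that the cubic approximation acts cube by cube and the bounds hold locally; either way, everything else reduces to the elementary energy estimate recorded above, which is why the stated rate $h$ (rather than $h^\epsilon$) is attainable here.
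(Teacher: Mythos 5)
Your proposal is correct and follows essentially the same route as the paper: form the difference equation for $\xi=\psi-\psi^h$, run an $L^2$ energy/Gr\"onwall estimate, and feed in the cubic-approximation bounds of \cite[Prop.~8.9]{BH} for $\lVert\phi_0-(\phi_0)_Q\rVert_{2}\lesssim\lVert\phi_0\rVert_{H^1}h$ and $\lVert V-V_Q\rVert_\infty\lesssim\lVert V\rVert_{W^{1,\infty}}h$. The only genuine divergence is the convolution term: the paper simply asserts $\lVert f*|\phi_0|^2-f_Q*(|\phi_0|^2)_Q\rVert_\infty\le C\lVert f\rVert_{\infty}\lVert\phi_0\rVert_2^2\,h$ (implicitly via the mean value theorem applied to $f$, so that only $\lVert\phi_0\rVert_2^2$ and the Lipschitz norm of $f$ enter), whereas you split off $(f-f_Q)*|\phi_0|^2$ and $f_Q*(|\phi_0|^2-(|\phi_0|^2)_Q)$ and use Cauchy--Schwarz/Young $L^2*L^2\hookrightarrow L^\infty$, which forces the extra step $|\phi_0|^2\in H^1$ via $H^2(\Omega)\hookrightarrow L^\infty$ in three dimensions; this is legitimate under the stated hypothesis $\phi_0\in H^2$, just slightly less economical in the regularity it consumes for that particular term. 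A minor bonus of your version is that you exploit the reality of the potential to kill the $(f_Q*(|\phi_0|^2)_Q+V_Q)\xi$ contribution when taking imaginary parts, avoiding the exponential prefactor $\exp\bigl(\lVert f_Q*(|\phi_0|^2)_Q+V_Q\rVert_\infty T\bigr)$ that appears in the paper's cruder triangle-inequality estimate; your remark about extending $\phi_0$, $V$, $|\phi_0|^2$ to $\RR^3$ before invoking the $\RR^3$-stated approximation bounds is also a reasonable way to handle a point the paper passes over silently.
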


\begin{proof}
Let $\xi = \psi - \psi^h$. Then due to 
\begin{equation*}
    \begin{split}
        \partial_t\xi &= (f*|\phi_0|^2+V-f_Q*|\phi_0|_Q^2-V_Q)\psi+(f_Q*|\phi_0|_Q^2+V_Q)\xi
        \end{split}
\end{equation*}
we find that 
\begin{equation*}
    \begin{split}
        \frac{d}{dt}\lVert\xi\rVert_{2} &\leq \lVert (f*|\phi_0|^2+V-f_Q*|\phi_0^h|^2-V_Q)\psi\rVert_{2}+\lVert (f_Q*(|\phi_0|^2)_Q+V_Q)\xi\rVert_{2}\\
        &\leq \lVert f*|\phi_0|^2+V-f_Q*(|\phi_0|^2)_Q-V_Q\rVert_{\infty}\lVert \psi\rVert_{2} + \lVert f_Q*(|\phi_0|^2)_Q+V_Q \rVert_{\infty} \lVert \xi \rVert_{2}    \end{split}
\end{equation*}
and hence by Gronwall's inequality
\begin{equation*}
    \begin{split}
        \sup_{t \in (0,T)}\lVert\xi(t)\rVert_{2} &\leq \left( \lVert \phi_0 - \phi_0^h \rVert_{2} + \lVert f*|\phi_0|^2+V-f_Q*(|\phi_0|^2)_Q-V_Q\rVert_{\infty}\lVert\psi\rVert_{L^\infty((0,T),L^2)}T   \right)\\&\quad\times \exp\left(\lVert f_Q*(|\phi_0|^2)_Q-V_Q\rVert_{\infty}T\right).
    \end{split}
\end{equation*}

Now we need to use \cite[Proposition 8.9]{BH} and the mean value theorem to have, for constants $C$
\[\lVert \phi_0 -\phi_0^h\rVert_{2} \leq C\lVert \phi_0 \rVert_{H^1}h\text{ and }
        \lVert V - V_Q \rVert_{\infty} \leq C\lVert V\rVert_{W^{1,\infty}}h.\]
Finally for the convolution $ \lVert f*|\phi_0|^2-f_Q*(|\phi_0|^2)_Q \rVert_{\infty} \leq C\lVert f \rVert_{\infty}\lVert \phi_0\rVert^2_{2}h.$ Hence, there is $C=C(\lVert\phi_0\rVert_{H^{1}},\lVert V\rVert_{W^{1,\infty}},\lVert f\rVert_{W^{1,\infty}},T)$ such that
\[ \sup_{t \in (0,T)}\lVert \xi(t)\rVert_{2} \leq Ch.\]
\end{proof}
Now we want to put all this together to get a scheme for calculating an approximation to the solution. 

As an intermediate step, we use a continuous (in space) Strang splitting scheme
\begin{equation}
    \begin{split}
    \label{eq:contStrang}
        \psi^-_{k+\frac{1}{2}} &= e^{-\frac{i\tau (-i\nabla-A)^2}{2}}\psi_S(t_k)\\
        \psi^+_{k+\frac{1}{2}}&=\exp\left(-i\tau \mathcal V(\psi^-_{k+\frac{1}{2}})\right)\psi^-_{k+\frac{1}{2}}\\
        \psi_S(t_{k+1}) &= e^{-\frac{i\tau (-i\nabla-A)^2}{2}}\psi^+_{k+\frac{1}{2}}
    \end{split}
\end{equation}
where $\psi_S(0)=\phi_0$ and $\mathcal V(\psi):=f*|\psi|^2+V$. Finally, we want to approximate this scheme using the cubic discretization and a Crank-Nicholson method for the propagation of the linear kinetic operator $(-i\nabla-A)^2.$ In particular,
\[\mathcal C^{\tau}_h \phi:=\left(1-\frac{i\tau}{2}H^h_{\operatorname{lin}} \right)\left(1+\frac{i\tau}{2} H^h_{\operatorname{lin}}\right)^{-1} \phi.\]
Recall that for $\lambda \in \mathbb R$ and $t \in [0,T]$ there is $C_T>0$ such that
\[ \left\vert e^{-it\lambda}-\left(\frac{1-i\frac{t}{2} \lambda}{1+i\frac{t}{2} \lambda} \right) \right\vert  \le C_T t^3 \vert \lambda \vert^3,\]
we see, using functional calculus that there is (Taylor expansion) some constant $C_T>0$ independent of $h$ and $\tau \in [0,T]$ such that 
\begin{equation}
\begin{split}
\label{eq:CNscheme}
 \left\lVert \left(e^{-iH_{\operatorname{lin}}^h\tau}  - \mathcal C^{\tau}_{h} \right)\psi^h (t_{k}) \right\rVert_{L^2(\Omega)}\le C_T  \tau^3/h^6.
\end{split}
\end{equation}
This yields a Strang splitting scheme
\begin{equation}
    \begin{split}
    \label{eq:Strang}
        \phi^-_{k+\frac{1}{2}} &= \mathcal{C}_h^{\tau/2}\phi_S(t_k)\\
        \phi^+_{k+\frac{1}{2}}&=\exp_K\left(-i\tau \mathcal V_Q(\phi^-_{k+\frac{1}{2}})\right)\phi^-_{k+\frac{1}{2}}\\
        \phi_S(t_{k+1}) &= \mathcal{C}_h^{\tau/2}\phi^+_{k+\frac{1}{2}}
    \end{split}
\end{equation}
 
 where $\psi_S(0)=(\phi_0)_Q$ and $\mathcal V_Q(\psi_Q):=f_Q*(|\psi|^2)_Q+V_Q$.

\begin{theo}
\label{theo:numerics}
Given an initial state $\phi_0\in H^2(\Omega) \cap H^1_0(\Omega)$ and  $V,f \in W^{1,\infty}(\Omega)$. There is a constant  $C=C(T,\tau,\lVert\phi_0\rVert_{H^2},\lVert V\rVert_{W^{1,\infty}},\lVert f\rVert_{W^{1,\infty}})$ such that the solution $\phi_S$ obtained from the Strang splitting scheme     \eqref{eq:Strang} and the full solution $\psi$ satisfy
\begin{equation}
\label{eq:Lipschitz}
\max_{k ; 0 \le k \le \lfloor T/\tau \rfloor} \rVert \psi(\tau k)-\psi_S(\tau k)\rVert_{2} \leq C\lVert \phi_0-(\phi_0)_Q\rVert_{2}.
\end{equation}
\end{theo}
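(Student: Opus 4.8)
The plan is a Lady Windermere's fan argument: interpolate between the exact solution $\psi$ and the fully discrete iterate $\psi_S$ by a finite chain of auxiliary trajectories, control each short-time gap between consecutive trajectories by the consistency results already proved, and transport these local errors to the final time using the $L^2$-stability of the discrete Strang flow $\Phi_\tau$. The chain I would use is: (i) $\psi$, the exact solution of the Dirichlet Hartree problem on $\Omega$, which lies in $L^\infty(0,T;H^2)$ with norm $\lesssim\Vert\phi_0\Vert_{H^2}$ by Theorem \ref{theo:HF} and the remark following it; (ii) the continuous-in-space Strang scheme \eqref{eq:contStrang} started from $\phi_0$, with the exact magnetic half-step propagator and the exact pair $(f,V)$; (iii) the same, but with $(f,V)$ replaced by $(f_Q,V_Q)$ in the potential sub-step; (iv) further replacing the exact half-step $e^{-i\tau(-i\nabla-A)^2/2}$ by the spatially discretized $e^{-iH^h_{\operatorname{lin}}\tau/2}$; (v) further replacing the latter by the Crank--Nicholson operator $\mathcal C_h^{\tau/2}$; and (vi) $\psi_S$, which is scheme (v) but started from $(\phi_0)_Q$ instead of $\phi_0$.

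First I would establish uniform $H^2$-bounds on all iterates of (ii)--(vi). Applying the $H^2_B$-stability half of the stability lemma to $\Phi_\tau(\cdot)$ against $\Phi_\tau(0)=0$ gives $\Vert\psi_{n+1}\Vert_{H^2_B}\le e^{c_1\tau}\Vert\psi_n\Vert_{H^2_B}$, where $c_1$ depends only on $\Vert f\Vert_{W^{2,\infty}}$ and $\Vert V\Vert_{W^{2,\infty}}$ (the Hartree term contributes $\Vert f*|\psi|^2\Vert_{W^{2,\infty}}\lesssim\Vert f\Vert_{W^{2,\infty}}$ by Young's inequality and conservation of the $L^2$ norm, with no dependence on higher norms); iterating yields $\sup_{0\le n\le\lfloor T/\tau\rfloor}\Vert\psi_n\Vert_{H^2}\le M$ with $M=M(T,\Vert\phi_0\Vert_{H^2},\Vert V\Vert_{W^{2,\infty}},\Vert f\Vert_{W^{2,\infty}})$, and analogously for (iii)--(vi) and for $\psi$ itself. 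With every trajectory confined to the $H^2$-ball of radius $M$, the $L^2$-stability estimate $\Vert\Phi_\tau(\phi)-\Phi_\tau(\chi)\Vert_2\le e^{c_0\tau}\Vert\phi-\chi\Vert_2$ applies at each step, so propagating a locally committed error across the remaining $N\le T/\tau$ steps costs at most the fixed factor $e^{c_0 T}$.

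Next I would bound the consecutive gaps. The gap between (i) and (ii) is the global time-discretization error of the continuous-space Strang splitting, $\le C(T)\tau$ by \eqref{eq:halfway}. The gap between (ii) and (iii) is controlled step-by-step by Lemma \ref{lemm:lemm2} applied on $[0,\tau]$ with the running iterate as initial datum (using the qualitative versions of the Proposition~8.9 bounds where the assumed regularity of $f,V$ is below that of Lemma \ref{lemm:lemm2}); its per-step error $\lesssim\tau h$ sums, via the fan, to $\lesssim T h$. The gap between (iii) and (iv) is handled likewise by Lemma \ref{lemm:lemm1} on $[0,\tau/2]$, contributing $\lesssim T h^\epsilon$ (or $T\,o_h(1)$ for $\phi_0\in H^2$ only). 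The gap between (iv) and (v) is the Crank--Nicholson error, bounded per half-step by $C_T(\tau/2)^3/h^6$ by \eqref{eq:CNscheme} and hence globally by $\lesssim T\tau^2/h^6$. Finally the gap between (v) and (vi) involves no consistency at all: it is a pure initial-data perturbation, so $L^2$-stability alone gives $\le e^{c_0 T}\Vert\phi_0-(\phi_0)_Q\Vert_2$. Summing through the triangle inequality gives $\max_k\Vert\psi(\tau k)-\psi_S(\tau k)\Vert_2\le C\big(\Vert\phi_0-(\phi_0)_Q\Vert_2+\tau+h+h^\epsilon+\tau^2/h^6\big)$ with $C=C(T,\Vert\phi_0\Vert_{H^2},\Vert V\Vert_{W^{1,\infty}},\Vert f\Vert_{W^{1,\infty}})$; since $\Vert\phi_0-(\phi_0)_Q\Vert_2\le C\Vert\phi_0\Vert_{H^1}h\to0$ as $h\to0$, coupling $\tau$ to $h$ so that the kinetic and Crank--Nicholson terms are dominated by the initial-data term delivers the stated bound with the constant depending additionally on $\tau$.

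I expect the main obstacle to be the bookkeeping that keeps the two stability constants $c_0,c_1$ and the local-error constants uniform across the whole chain at once, together with the scale balancing in the Crank--Nicholson step, where \eqref{eq:CNscheme} gives only $O(\tau^3/h^6)$ per half-step so that $\tau$ must be taken small relative to $h$. Closing the a priori $H^2$-bound cleanly, so that the step-wise consistency lemmas — stated for a generic $H^2$ initial datum — may legitimately be invoked with the running iterate, is the other point that needs care.
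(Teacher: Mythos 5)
Your proposal follows essentially the same route as the paper: convergence of the exact solution to the space-continuous Strang scheme via \eqref{eq:halfway}, replacement of the sub-steps by their discretized versions via Lemmas \ref{lemm:lemm1} and \ref{lemm:lemm2} and the Crank--Nicholson estimate \eqref{eq:CNscheme}, and propagation of the local errors (including the initial-data perturbation $\lVert \phi_0-(\phi_0)_Q\rVert_{2}$) through the $L^2$-stability of the splitting flow in a Lady Windermere's fan. Your telescoping chain of six trajectories is in fact a more explicit bookkeeping of what the paper compresses into its short argument (the paper's only extra displayed step, the estimate for the propagation of the discretized nonlinearity, is subsumed in your step (ii)--(iii) plus stability), so the proposal is correct and matches the paper's proof.
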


\begin{proof}
We have already shown the convergence to the continuous Strang splitting scheme \eqref{eq:contStrang} in Lemmas \ref{lemm:lemm1} and \ref{lemm:lemm2}. It therefore suffices to approximate the continuous splitting scheme by the discrete one
. 
The $L^2$-convergence to the discrete approximation by the Crank-Nicholson method in the discrete Strang splitting scheme \eqref{eq:Strang}, follows already from \eqref{eq:CNscheme}. Thus, it only remains to prove the convergence of the propagation of the non-linearity:
\begin{equation*}
    \begin{split}
        \lVert &\left(e^{-i\tau \mathcal V(\psi^-_{k+1/2})}-e^{-i\tau \mathcal V_Q(\phi^-_{k+1/2})}\right)\phi^-_{k+1/2}\rVert_{2} \\&\leq \left\lVert \psi^-_{k+1/2}-\phi^-_{k+1/2}\right\rVert_{2} + \left\lVert \left(e^{-i\tau \mathcal V(\psi^-_{k+1/2})}-e^{-i\tau \mathcal V_Q(\phi^-_{k+1/2})}\right)\phi^-_{k+1/2} \right\rVert_{2}\\
        &\leq \left\lVert \psi^-_{k+1/2}-\phi^-_{k+1/2}\right\rVert_{2}+\tau \Vert f* \vert \psi^-_{k+1/2} \vert^2 -f_Q*\vert \phi^-_{k+1/2}\vert^2 \Vert_{\infty}\\
        &\leq \left\lVert \psi^-_{k+1/2}-\phi^-_{k+1/2}\right\rVert_{2}+\tau \Vert f* (\vert \psi^-_{k+1/2} \vert^2 -\vert \phi^-_{k+1/2}\vert^2) \Vert_{\infty} +\tau \left\lVert f-f_Q \right\rVert_{\infty}\\
        &\leq (1+2\tau \Vert f \Vert_{\infty})\left\lVert \psi^-_{k+1/2}-\phi^-_{k+1/2}\right\rVert_{2} +\tau \left\lVert f-f_Q \right\rVert_{\infty}.
    \end{split}
\end{equation*}

So we have enough for global Lipschitz estimate \eqref{eq:Lipschitz} of the discrete Strang splitting scheme.

\end{proof}

\section{Optimal control theory}
\label{sec:OCT}
In this section we study an abstract optimal control problem (OCP) for the linear Schr\"odinger equation with magnetic field \eqref{eq:bilSchr}. We consider an energy functional
\begin{equation}
\label{eq:controlfunc}
 \mathcal I(u):=\left\langle \psi(T),S \psi(T) \right\rangle+ \kappa \left\lVert u \right\rVert_{H^1_0(0,T)}^2
 \end{equation}
for a positive operator\footnote{it is easy to see that same arguments holds if $S$ is only assumed to be semi-bounded from below} $S : D(S) \subset  L^2(\mathbb R^3) \rightarrow L^2(\mathbb R^3)$ with form domain $D(\sqrt{S})$ continuously embedded in $H^1_1(\mathbb R^3)$ and parameter $\kappa>0$. Here, $\psi$ is the solution to the linear or nonlinear Schr\"odinger equation with initial value $\varphi_0 \in H^2_2(\mathbb R^3)$ and parameters as specified in Assumption \ref{ass:ass1}, cf. Theorem \ref{theo:H2ExistBoundedMag} and Theorem \ref{theo:magfield}.

Examples of such optimal control problems are numerous \cite{WG} and the motivation to study such a problem in quantum mechanics is often to engineer a control function in order to reach a designated target state $\zeta \in L^2(\mathbb R^3)$. In this case, we can choose $S$ to be the projection $S=\operatorname{id}- \langle \bullet, \zeta\rangle \zeta$. Other common examples of penalization operators $S$ include the minimization/maximization of some physical observable such as kinetic energy $\left\langle \psi(T),-\Delta \psi(T) \right\rangle$. The term $\kappa \left\lVert u \right\rVert_{H^1_0(0,T)}^2$, appearing in the control functional \eqref{eq:controlfunc}, penalizes control functions of high energy. 

\begin{lemm}
The functional $\mathcal I$ has a minimizer in $H^1_0(0,T)$. 
\end{lemm}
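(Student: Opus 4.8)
My plan is to argue by the direct method of the calculus of variations. First I would observe that, since $S\ge 0$ and $\kappa>0$, one has $\mathcal I(u)\ge 0$ for every $u\in H^1_0(0,T)$, so that $m:=\inf_{u\in H^1_0(0,T)}\mathcal I(u)$ is a finite nonnegative number. I then fix a minimizing sequence $(u_n)_{n\in\NN}\subset H^1_0(0,T)$ with $\mathcal I(u_n)\to m$, note that $\kappa\lVert u_n\rVert_{H^1_0(0,T)}^2\le\mathcal I(u_n)$ keeps $(u_n)$ bounded in the Hilbert space $H^1_0(0,T)$, and extract, after passing to a subsequence (not relabelled), a weak limit $u_n\rightharpoonup u^\ast\in H^1_0(0,T)$. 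It then remains to show that $\mathcal I$ is sequentially weakly lower semicontinuous along this sequence: this yields $\mathcal I(u^\ast)\le\liminf_n\mathcal I(u_n)=m$, which together with $u^\ast\in H^1_0(0,T)$ forces $\mathcal I(u^\ast)=m$.

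The penalty term is immediate, since $u\mapsto\lVert u\rVert_{H^1_0(0,T)}^2$ is convex and norm-continuous, hence sequentially weakly lower semicontinuous, so $\lVert u^\ast\rVert_{H^1_0(0,T)}^2\le\liminf_n\lVert u_n\rVert_{H^1_0(0,T)}^2$. The substantial step is the endpoint term $u\mapsto\langle\psi_u(T),S\psi_u(T)\rangle$, where $\psi_u$ denotes the solution of \eqref{eq:bilSchr} (respectively \eqref{eq:HF} in the Hartree case) with control $u$. Here I would use that the one-dimensional Sobolev embedding makes $H^1_0(0,T)$ embed compactly into $C([0,T])$, hence into $L^1(0,T)$, so that $u_n\to u^\ast$ strongly in $L^1(0,T)$. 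Since $(u_n)$ is bounded in $W^{1,1}(0,T)$, the Lipschitz dependence of the solution on the control in the $L^1$-norm — the Remark on control functions following Lemma \ref{lemm:potpert}, whose argument carries over verbatim to the magnetic linear Schr\"odinger equation — then yields $\psi_{u_n}(T)\to\psi_{u^\ast}(T)$ strongly in $L^2(\RR^3)$.

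To pass to the limit inside the quadratic form I would invoke the uniform a priori bound: since $\lVert u_n\rVert_{W^{1,1}(0,T)}$ and $\lVert\varphi_0\rVert_{H^2_2}$ are uniformly bounded, Theorem \ref{theo:H2ExistBoundedMag} and Theorem \ref{theo:magfield} (respectively Theorem \ref{theo:HF} in the Hartree case) give $\sup_n\lVert\psi_{u_n}\rVert_{L^\infty(0,T;H^2_2)}<\infty$. A sequence that is bounded in the Hilbert space $H^1_1(\RR^3)$ and converges in $L^2(\RR^3)$ converges weakly in $H^1_1(\RR^3)$ to the same limit (the weak limit being identified by testing against smooth compactly supported functions), so $\psi_{u_n}(T)\rightharpoonup\psi_{u^\ast}(T)$ weakly in $H^1_1(\RR^3)$. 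By the continuity relating $H^1_1(\RR^3)$ and the form domain $D(\sqrt S)$ assumed in the set-up, the quadratic form $f\mapsto\langle f,Sf\rangle=\lVert\sqrt S f\rVert_2^2$ is finite, convex and continuous on $H^1_1(\RR^3)$, hence sequentially weakly lower semicontinuous there; alternatively one upgrades to strong $H^1_1$-convergence via the interpolation inequality $\lVert f\rVert_{H^1_1}^2\lesssim\lVert f\rVert_{L^2}\lVert f\rVert_{H^2_2}$ and uses plain continuity of the form. Either way $\langle\psi_{u^\ast}(T),S\psi_{u^\ast}(T)\rangle\le\liminf_n\langle\psi_{u_n}(T),S\psi_{u_n}(T)\rangle$, and adding the two lower-semicontinuity inequalities concludes the argument.

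The step I expect to be the main obstacle is precisely this continuity of the control-to-endpoint-energy map: the $L^1$-Lipschitz stability of the state alone delivers only $L^2$-convergence of $\psi_{u_n}(T)$, and one must combine it with the uniform $H^2_2\hookrightarrow H^1_1$ bound of Theorems \ref{theo:H2ExistBoundedMag}, \ref{theo:magfield} and \ref{theo:HF} so that the endpoint states converge weakly in the form domain of $S$, where the quadratic form is weakly lower semicontinuous. The remaining ingredients — boundedness below, weak compactness in $H^1_0(0,T)$, and weak lower semicontinuity of the $H^1_0$-norm term — are the routine machinery of the direct method.
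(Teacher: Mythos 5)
Your proof is correct, but it secures convergence of the endpoint states by a genuinely different mechanism than the paper. The paper runs the direct method on the state sequence itself: the solutions $\psi_n$ are bounded in $\{\varphi \in L^{\infty}(0,T;H^2_2),\ \varphi' \in L^{\infty}(0,T;L^2)\}$, the compactness of $H^2_2$ in $H^{2-\varepsilon}_{2-\varepsilon}$ plus the Aubin--Lions lemma yields a subsequence converging in $C((0,T);H^{2-\varepsilon}_{2-\varepsilon})$, the limit is identified with the solution driven by the weak limit $u$, and weak lower semicontinuity concludes. You instead transfer compactness from the control variable: the compact one-dimensional embedding $H^1_0(0,T)\hookrightarrow C([0,T])$ gives $u_n \to u^{\ast}$ in $L^1$, the $L^1$-Lipschitz dependence of the state on the control (the Remark after Lemma \ref{lemm:potpert}, with constants uniform along the sequence because $\lVert u_n\rVert_{W^{1,1}} \lesssim \lVert u_n\rVert_{H^1_0}$ is bounded and Theorems \ref{theo:H2ExistBoundedMag}, \ref{theo:magfield}, \ref{theo:HF} give uniform $H^2_2$ bounds) yields $\psi_{u_n}(T)\to\psi_{u^{\ast}}(T)$ in $L^2$, and interpolation with the uniform $H^2_2$ bound upgrades this to convergence in $H^1_1$, where the quadratic form of $S$ is lower semicontinuous. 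Your route avoids Aubin--Lions altogether, makes explicit the identification of the limiting state (a step the paper leaves implicit, and which in effect also rests on stability in the control), and your coercivity argument for boundedness of the minimizing sequence is cleaner than the paper's bare appeal to Banach--Alaoglu; the paper's route, on the other hand, does not lean on the Remark following Lemma \ref{lemm:potpert}, whose proof is only sketched, and extracts compactness of the whole trajectory directly from the state bounds. One shared caveat: both arguments need the form of $S$ to be controlled by the $H^1_1$ (respectively $H^{2-\varepsilon}_{2-\varepsilon}$) norm, i.e. the embedding $H^1_1 \hookrightarrow D(\sqrt{S})$, which is the reading of the hypothesis required for $\mathcal I$ to be finite, even though the paper states the embedding in the opposite direction; your parenthetical alternative (weak lower semicontinuity in $L^2$ of a nonnegative closed form) would in fact sidestep this point entirely.
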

\begin{proof}
The Banach-Alaoglu theorem implies the existence of a $H^1_0(0,T)$-weakly convergent subsequence $u_n \overset{}{\rightharpoonup} u$ such that $\lim_{n \rightarrow \infty} \mathcal I(u_n)=\inf_{w \in H^1_0(0,T)} \mathcal I(w).$
Since $u_n$ is weakly convergent, the sequence is bounded. Hence, the sequence $\psi_n$ of solutions to \eqref{eq:bilSchr} with the above controls $u_n$ and joint initial value $\varphi_0 \in H^2_2(\mathbb R^3)$ is bounded in 
\[W:=\left\{\varphi \in L^{\infty}((0,T), H^2_2(\mathbb R^3)), \varphi' \in L^{\infty}((0,T),L^2(\mathbb R^3)) \right\}.\]
As $H^2_2(\mathbb R^3)$ is compact in $H_{2-\varepsilon}^{2-\varepsilon}(\mathbb R^3)$ the Aubin-Lions lemma implies that for yet another subsequence of $(\psi_n)$
\[\lim_{n \rightarrow \infty}\left\lVert  \psi_n-\psi \right\rVert_{C((0,T); H_{2-\varepsilon}^{2-\varepsilon}(\mathbb R^3))}=0\]
where $\psi$ is the solution to the Schr\"odinger equation with initial value $\varphi_0$ and control $u.$ Weak lower semicontinuity of the $H^1_0(0,T)$ norm and $H_{2-\varepsilon}^{2-\varepsilon}$ convergence of $(\psi_n)$ implies that $\mathcal I(u) = \inf_{w \in H^1_0(0,T)} \mathcal I(w).$
\end{proof}

The \emph{adjoint equation} to the linear magnetic Schr\"odinger equation is defined as
\begin{equation}
\begin{split}
\label{eq:adjoint}
i \partial_t \zeta &=(-i\nabla-A)^2\zeta + V\zeta + V_{\operatorname{TD}}(t)\zeta, \ (x,t) \in \mathbb R^3 \times (0,T) \\
\zeta(T)&=S\psi(T).
\end{split}
\end{equation}
It is easy to see that the vanishing of the Gateaux derivative of the cost functional $(D_h\mathcal I)(u)=0$ for all admissible functions $h$ is equivalent to a differential equation which is often referred to as the \emph{optimality equation}. For the linear Schr\"odinger equation this equation reads  
\[-2\kappa u''(t) =\left\langle \zeta(t), V_{\operatorname{con}} \psi(t) \right\rangle_{L^2(\mathbb R^3)}, \ u \in H^1_0(0,T),\] 
with parameter $\kappa$ as in the control functional \eqref{eq:controlfunc}, and $\zeta$ the solution to the adjoint problem.
Various algorithms \cite{IK1,IK2,WG} to numerically solve the OCP \eqref{eq:controlfunc} rely on solving the triplet consisting of Schr\"odinger equation \eqref{eq:bilSchr}, adjoint equation \eqref{eq:adjoint}, and optimality differential equation. The following example shows that the optimal control can in general not be computed by that approach.

Our findings of the previous sections showed that it is possible to compute the solution to the Schrödinger equation by restricting it to an auxiliary BVP. The following example shows that this is not possible for optimal control problems.
\begin{ex}
Consider an odd control potential $V$ and an even initial state $\varphi_0$ to the Schr\"odinger equation
\begin{equation*}
\begin{split}
i \psi_u'(t,x) = \left(-\partial_x^2 + u(t)V\right) \psi_u(t,x), \quad \psi(0) =\varphi_0.
\end{split}
\end{equation*}
We also take $S$ to be an even function in the window that the algorithm samples such that there exists a non-zero optimal control with $L^2$ distance to any other optimal control of at least $2\varepsilon.$ Our assumptions guarantee that if $u$ is an optimal control, then so is $-u$ as the solution satisfies $\psi_u(x)= \psi_u(-x)$ and $S$ is assumed to be an even function in the window the algorithm samples.

It suffices now to continue $S$ in such a way that the control functional \eqref{eq:controlfunc} favors $u$ over $-u$. The algorithm necessarily has to include both $u$ and $-u$ and will therefore be off by a Hausdorff distance $2\varepsilon$ at least.
\end{ex}
Thus, it is impossible to numerically approximate optimal controls numerically by sampling only a finite amount of data. However, it is possible to approximate the value of the control functional for a positive Schr\"odinger operator $S=(-i\nabla-A)^2+ U$ with magnetic potential $A$ and control potential $U$ (possibly different from the potentials governing the evolution of the equation) satisfying the conditions of the static potential in Assumption \ref{ass:ass1}.

\begin{proof}
The result follows straight from the result we have already established in previous sections:
We first observe that by Theorems \ref{theo:magfield} and \ref{theo:numerics}, we can compute up to any precision the value of the control functional \eqref{eq:controlfunc} for any fixed control $u$ in the domain of the computational problem.
Thus, by computing the value of the control functional for any fixed control, we obtain an upper bound on the minimal value of \eqref{eq:controlfunc} and thus also on the $H^1_0$ bound of any optimal control. This yields a bound on the absolute value of Fourier coefficients on the optimal control. This implies an error that does not exceed $\varepsilon$ in the optimal control functional, as the solution is easily seen to be Lipschitz continuous with respect to controls in $L^1(0,T)$.\end{proof}

\section{Numerical examples}
\label{sec:numex}
\begin{figure}[ht!]
   \subfloat[\label{genworkflow}]{%
      \includegraphics[clip, width=0.3\textwidth]{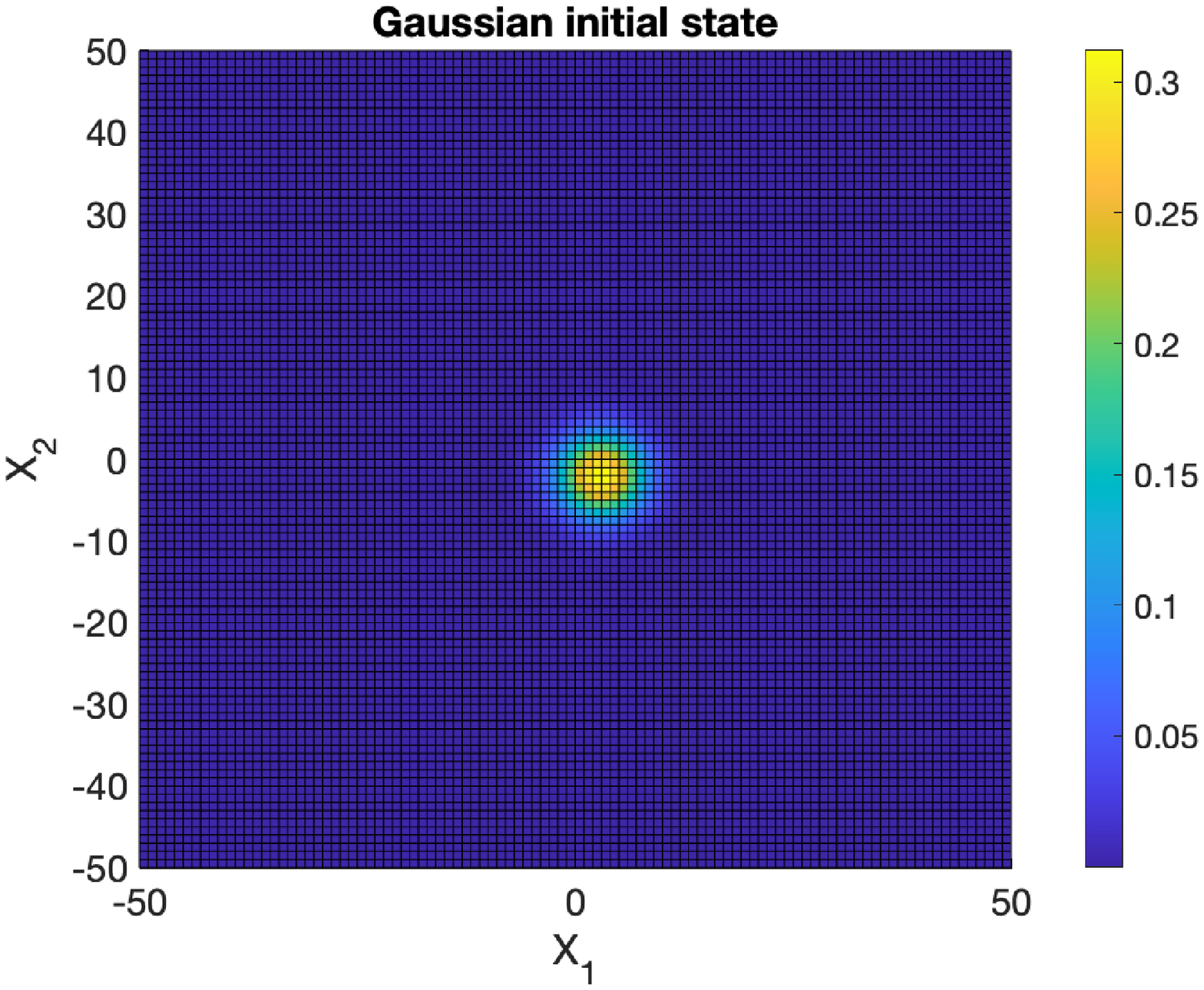}}
\hspace{\fill}
   \subfloat[\label{pyramidprocess} ]{%
      \includegraphics[clip, width=0.32\textwidth]{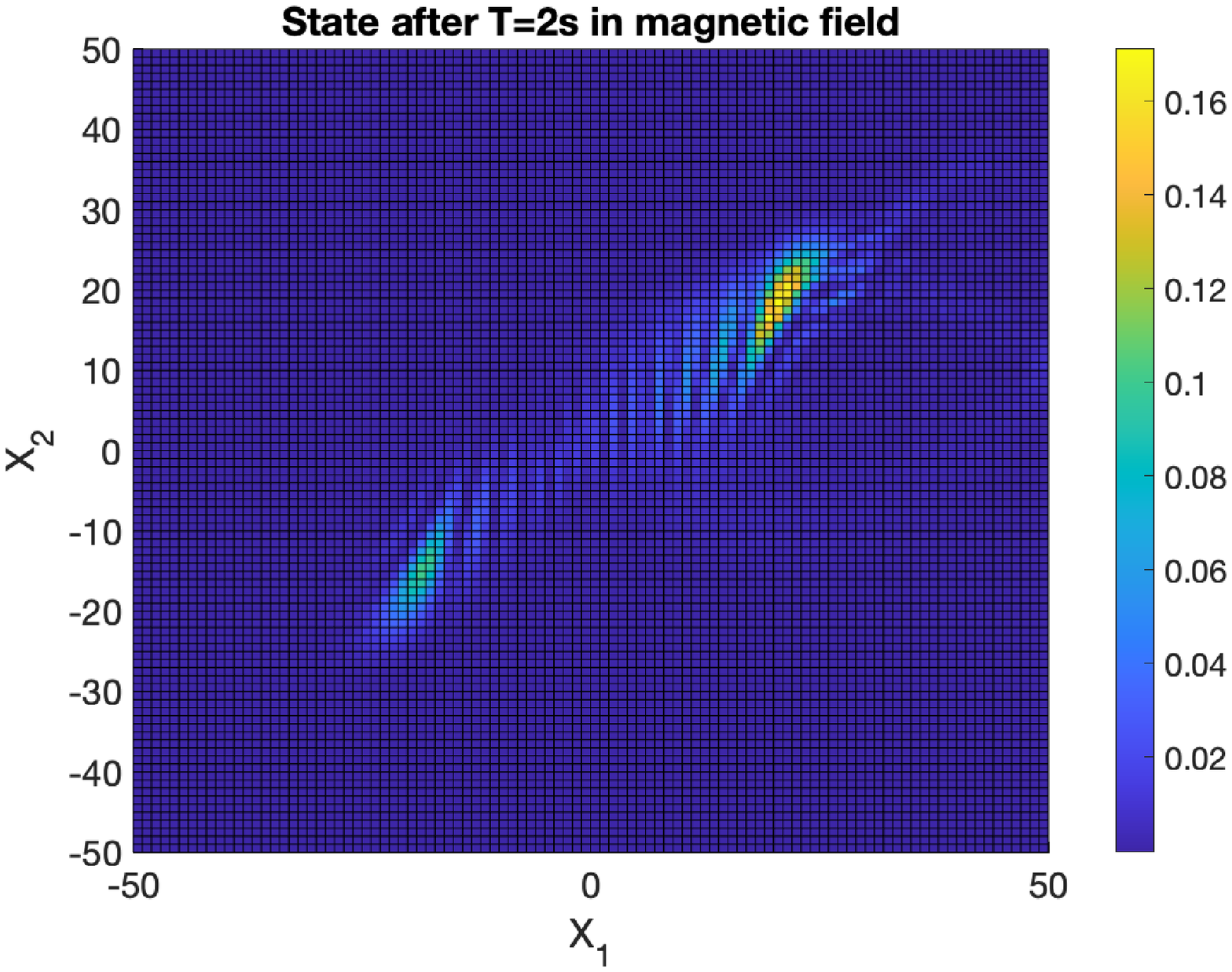}}
\hspace{\fill}
   \subfloat[\label{mt-simtask}]{%
      \includegraphics[clip, width=0.32\textwidth]{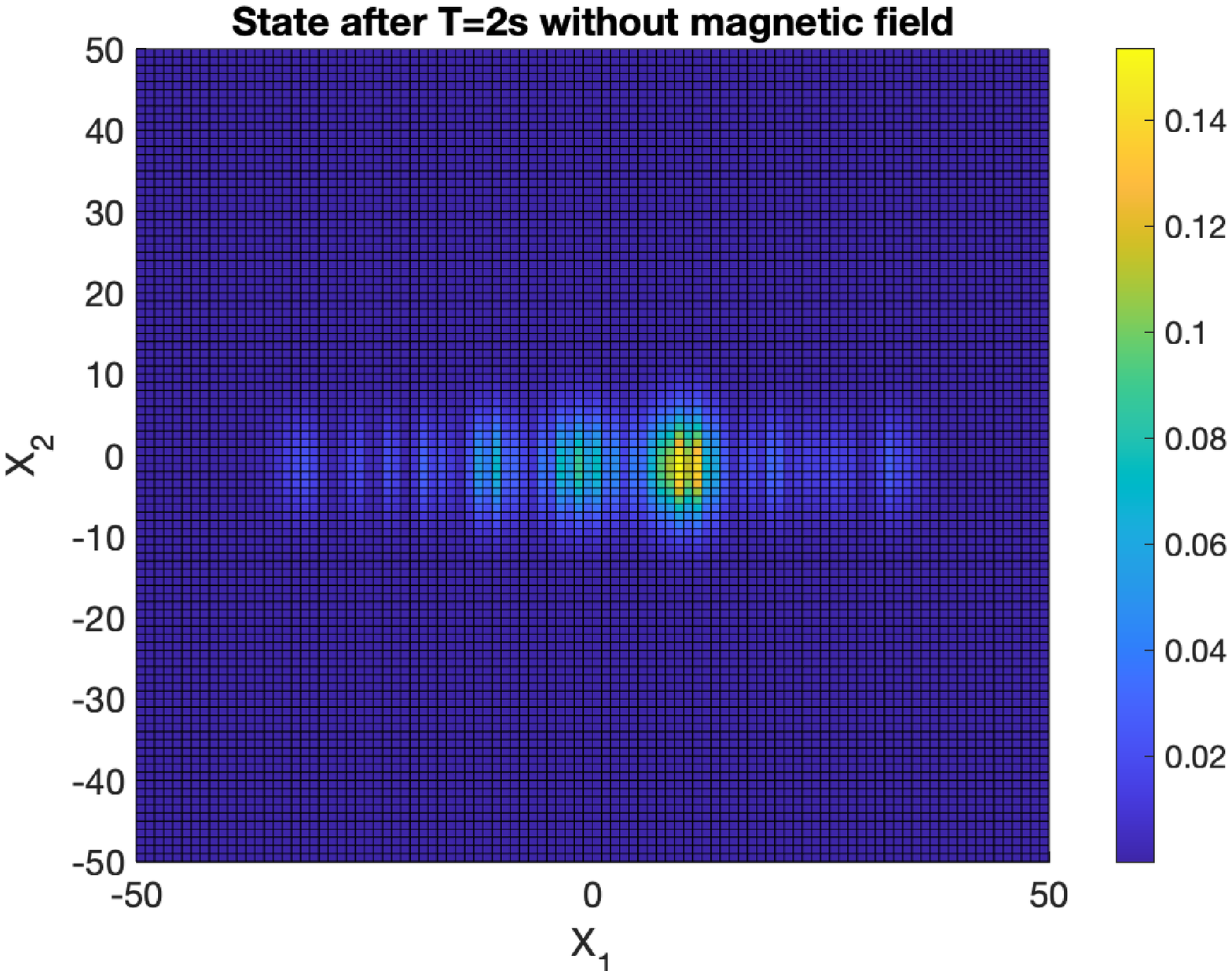}}\\
\caption{\label{workflow}Propagation of Gaussian initial state \eqref{eq:Gaussian} under the linear Schrödinger equation. The colormap depicts the probability amplitude of the state in the respective square (a) initial state at time $t=0$; (b) evolution after time $t=2s$ with magnetic field where the Lorentz force shifts the direction of propagation; (c) evolution after time $t=2s$ without magnetic field.}
\label{fig:propagation}
\end{figure}
We consider a Gaussian initial state 
\begin{equation}
    \label{eq:Gaussian}
\psi_0(x) = \frac{1}{\pi} \operatorname{exp}\left(-\frac{(x_1+5/2)^2}{2}-\frac{(x_2-5/2)^2}{2} \right)\end{equation}
and a potential that is quadratically confining in $x_1$ direction and quadratically spreading (exponentially fast in the dynamics) in $x_2$ direction
$$V_{\operatorname{reg}}(x) = x_1^2-x_2^2.$$ In addition, we consider a constant magnetic field of unit strength $B_0=1.$
We then simulate the dynamics for two seconds with $100$ time steps with Crank-Nicholson method and Strang splitting scheme in Figure \ref{fig:propagation}. The wavepacket then propagates exponentially fast away from the origin, which is consistent with the Gronwall estimates from our theoretical part, but the entire dynamics can still be captured in a bounded domain with Dirichlet boundary conditions.

\end{document}